\numberwithin{equation}{section}
\theoremstyle{plain}
\newtheorem{theorem}{Theorem}
\newtheorem{corollary}[theorem]{Corollary}
\newtheorem{lemma}[theorem]{Lemma}
\newtheorem{proposition}[theorem]{Proposition}
\theoremstyle{definition}
\newtheorem{examples}[theorem]{Examples}
\theoremstyle{remark}
\newtheorem{remark}[theorem]{Remark}
\def\holder{H\"{o}lder }
\def\holders{H\"{o}lder's }
\def\quoteleft{`} 
\begin{document}

\title{On the support of solutions to \\
stochastic differential equations \\
with path-dependent coefficients}
\author{Rama Cont
\footnote{Mathematical Institute, University of Oxford. Email: {\tt Rama.Cont@maths.ox.ac.uk}} 
\and Alexander Kalinin
\setcounter{footnote}{6}\footnote{Department of Mathematics, Imperial College London. Email: {\tt alex.kalinin@mail.de}. Alexander Kalinin's research was supported by a Chapman fellowship from the Department of Mathematics, Imperial College London.}
}
\date{To appear in: \\
\href{https://doi.org/10.1016/j.spa.2019.07.015}{\it Stochastic processes and their applications}, Volume 129 (2019) }
\maketitle

\begin{abstract}
Given a stochastic differential equation with path-dependent coefficients driven by a multidimensional Wiener process, we show that the support of the law of the solution is given by the image of the Cameron-Martin space under the flow of mild solutions to a system of path-dependent ordinary differential equations. Our result extends the Stroock-Varadhan support theorem for diffusion processes to the case of SDEs with path-dependent coefficients. The proof is based on functional It{\^o} calculus.
\end{abstract}

\noindent
{\bf MSC2010 classification:} 60H10, 28C20, 34K50.\\
{\bf Keywords:} support theorem, stochastic differential equation, functional equation, semimartingale, Wiener space, functional It{\^o} calculus.

\tableofcontents

\section{Overview}

\subsection{Support theorems for stochastic differential equations}

A stochastic process may be viewed  as a random variable taking values in a space of paths. The (topological) support of this random variable then describes the closure of the set of all attainable paths and provides insight into the structure of sample paths of the process. The nature of the support has been investigated for various classes of stochastic processes, with a focus on stochastic differential equations, under different function space topologies.

For diffusion processes the support with respect to the supremum norm was first described by Stroock and Varadhan~\cite{stroock1972,DiffPro}, a result known as the `Stroock-Varadhan support theorem'. An extension to unbounded coefficients was given by Gy\"{o}ngy~\cite{gyongy1990}. The support of more general Wiener functionals and extensions to SDEs in Hilbert spaces are discussed in Aida et al.~\cite{aida, aida1990}. These results were extended to the \holder topology by Ben Arous et al.~\cite{benarous1994} and, using different techniques, by Millet and Sanz-Sol\'e~\cite{SupportThm}. Bally et al.~\cite{bally1995} use similar methods to derive a support theorem in \holder norm for parabolic SPDEs.
Support theorems in p-variation topology are discussed by Ledoux et al.~\cite{ledoux2002}, using rough path techniques.
Support theorems in \holder and p-variation topologies are discussed in~\cite{frizvictoir} and Pakkanen~\cite{pakkanen2010} gives conditions for a stochastic integral to have full support. In this work we extend some of these results to stochastic differential equations with path-dependent coefficients. 

Let $r,T\geq 0$ with $r < T$ and $d,m\in\mathbb{N}$ and suppose that $(\Omega,\mathscr{F},(\mathscr{F}_{t})_{t\in [0,T]},P)$ is a filtered probability space satisfying the usual conditions and on which there is a standard $d$-dimensional $(\mathscr{F}_{t})_{t\in [0,T]}$-Brownian motion $W$. We consider the following path-dependent stochastic differential equation: 
\begin{equation}\label{SDE}
dX_{t} = b(t,X)\, dt + \sigma(t,X)\,dW_{t}\quad\text{for $t\in [r,T]$}
\end{equation}
with initial condition $X_{s} = \hat{x}(s)$ for $s\in [0,r]$, where $\hat{x}\in C([0,T],\mathbb{R}^{m})$ and the coefficients $b:[r,T]\times C([0,T],\mathbb{R}^{m})\rightarrow\mathbb{R}^{m}$ and $\sigma:[r,T]\times C([0,T],\mathbb{R}^{m})\rightarrow\mathbb{R}^{m\times d}$ are product measurable and non-anticipative in the sense that $b(t,x)$ and $\sigma(t,x)$ depend on the path $x\in C([0,T],\mathbb{R}^{m})$ up to time $t\in [r,T]$ only. 

Under Lipschitz continuity and affine growth conditions on $b$ and $\sigma$, this SDE admits a unique strong solution for which a.e.~sample path lies in the delayed \holder space $C_{r}^\alpha([0,T],\mathbb{R}^{m})$ for every $\alpha\in [0,1/2)$. Our main result is a description of the support of the solution in the \holder topology: we show that the support of its law is given by the image of the Cameron-Martin space under the flow associated with a system of functional differential equations. 

\subsection{Statement of the main result}

Denote by $|\cdot|$   the Euclidean norms in $\mathbb{R}^{d}$ and $\mathbb{R}^{m}$ and the Hilbert-Schmidt norms in $\mathbb{R}^{d\times d}$, $\mathbb{R}^{m\times d}$ and $\mathbb{R}^{m\times m}$. We let $\mathbbm{I}_{d}$ be the $d\times d$ identity matrix and for a matrix $A\in\mathbb{R}^{m\times d}$ we denote by $A'$ its transpose. For a path $x:[0,T]\rightarrow\mathbb{R}^{m}$ we let $x^{t}$ be the path stopped at $t\in [0,T]$: $x^{t}(s)=x(s\wedge t)$ for  $s\in [0,T]$. Throughout, $C([0,T],\mathbb{R}^{m})$ is the separable Banach space 
of all $\mathbb{R}^{m}$-valued continuous maps on $[0,T]$, endowed with the supremum norm given by $\|x\|_{\infty}=\sup_{t\in [0,T]}|x(t)|$. 

For $\alpha\in (0,1]$ we introduce the non-separable Banach space $C_{r}^{\alpha}([0,T],\mathbb{R}^{m})$ of all $x\in C([0,T],\mathbb{R}^{m})$ that are $\alpha$-\holder continuous on $[r,T]$, equipped with the \quoteleft delayed $\alpha$-\holder norm' defined via
\begin{equation}\label{Delayed Hoelder Norm}
\|x\|_{\alpha,r} := \|x^{r}\|_{\infty} + \sup_{s,t\in [r,T]:\,s\neq t}\frac{|x(s) -x(t)|}{|s-t|^{\alpha}}.
\end{equation}
We set $C_{r}^{0}([0,T],\mathbb{R}^{m}):=C([0,T],\mathbb{R}^{m})$ and $\|\cdot\|_{0,r}:=\|\cdot\|_{\infty}$, by convention, and let $H_{r}^{1}([0,T],\mathbb{R}^{m})$ be the separable Banach space of all $x\in C([0,T],\mathbb{R}^{m})$ that are absolutely continuous on $[r,T]$ and whose weak derivative $\dot{x}$ is square-integrable with respect to the Lebesgue measure, endowed with the `delayed Cameron-Martin norm' given by
\begin{equation}\label{Delayed Cameron-Martin Norm}
\|x\|_{H,r} := \|x^{r}\|_{\infty} + \bigg(\int_{r}^{T}|\dot{x}(s)|^{2}\,ds\bigg)^{1/2}.
\end{equation}
Then it holds that $H_{r}^{1}([0,T],\mathbb{R}^{m})\subsetneq C_{r}^{1/2}([0,T],\mathbb{R}^{m})$ and $\|x\|_{1/2,r} \leq \|x\|_{H,r}$ for all $x\in H_{r}^{1}([0,T],\mathbb{R}^{m})$. In the case of no delay, the Cameron-Martin space is a Hilbert space. Further, we allow infinite values and extend the definitions of $\|\cdot\|_{\infty}$ and $\|\cdot\|_{\alpha,r}$ at~\eqref{Delayed Hoelder Norm} to any map $x:[0,T]\rightarrow\mathbb{R}^{m}$ and the definition of $\|\cdot\|_{H,r}$ at~\eqref{Delayed Cameron-Martin Norm} to any map $x:[0,T]\rightarrow\mathbb{R}^{m}$ that is absolutely continuous on $[r,T]$.

We assume the diffusion coefficient $\sigma$ to be of class $\mathbb{C}^{1,2}$ on $[r,T)\times C([0,T],\mathbb{R}^{m})$ in the sense of horizontal and vertical differentiability~\cite{Dupire, CF09}, as discussed in Section~\ref{Non-anticipative functional calculus}, and consider the map $\rho:[r,T]\times C([0,T],\mathbb{R}^{m})\rightarrow\mathbb{R}^{m}$ with components
\begin{equation}\label{Support ODE Correction Term}
\rho_{k}(t,x):= \sum_{l=1}^{d}\partial_{x}\sigma_{k,l}(t,x)\sigma(t,x) e_{l},
\end{equation}
if $t < T$, and $\rho_{k}(t,x):=0$, otherwise. Here, $\{e_{1},\dots,e_{d}\}$ denotes the canonical basis of $\mathbb{R}^{d}$ and $\partial_{x}\sigma_{k,l}:[r,T)\times C([0,T],\mathbb{R}^{m})\rightarrow\mathbb{R}^{1\times m}$ is the vertical derivative of the $(k,l)$-coordinate of $\sigma$ for every $k\in\{1,\dots,m\}$ and $l\in\{1,\dots,d\}$. Note that $\rho = \partial_{x}\sigma \sigma$ for $m=d=1$. Moreover, the horizontal and the second-order vertical derivative of $\sigma$ are denoted by $\partial_{t}\sigma$ and $\partial_{xx}\sigma$, respectively.

We characterize the support of the unique strong solution to~\eqref{SDE} in terms of the following path-dependent ordinary differential equation driven by an element $h\in H_{r}^{1}([0,T],\mathbb{R}^{d})$:
\begin{equation}\label{Support ODE}
\dot{x}_{h}(t) = (b - (1/2)\rho)(t,x_{h})\, + \sigma(t,x_{h})\,\dot{h}(t)\quad\text{for $t\in [r,T]$.}
\end{equation}
Based on these preliminaries, our main result may be stated as follows:

\begin{theorem}[Support theorem for path-dependent SDEs]\label{Support Theorem}\  \\
Let $\sigma$ be of class $\mathbb{C}^{1,2}$ on $[r,T)\times C([0,T],\mathbb{R}^{m})$. Assume that $\sigma$ and $\partial_{x}\sigma$  are bounded and there are $c,\eta,\lambda\geq 0$ and $\kappa\in [0,1)$ such that
\begin{align*}
|b(t,x)|\leq c(1 + \|x\|_{\infty}^{\kappa}),\quad |b(t,x)-b(t,y)|&\leq \lambda\|x-y\|_{\infty},\\
|\partial_{t}\sigma_{k,l}(t,x)| + |\partial_{xx}\sigma_{k,l}(t,x)|&\leq c(1 + \|x\|_{\infty}^{\eta})\quad\text{and}\\
|\sigma(t,x)-\sigma(s,y)| + |\partial_{x}\sigma_{k,l}(t,x) - \partial_{x}\sigma_{k,l}(s,y)|&\leq \lambda(|s-t|^{1/2} + \|x^{t} - y^{s}\|_{\infty})
\end{align*}
for all $s,t\in [r,T)$, $x,y\in C([0,T],\mathbb{R}^{m})$, $k\in\{1,\dots,m\}$ and $l\in\{1,\dots,d\}$. Then the following three assertions are valid:
\begin{enumerate}[(i)]
\item Pathwise uniqueness holds for~\eqref{SDE} and there is a unique strong solution $X$ to~\eqref{SDE} satisfying $X_{s} = \hat{x}(s)$ for all $s\in [0,r]$ a.s. Further, $E[\|X\|_{\alpha,r}^{p}] < \infty$ for each $\alpha\in [0,1/2)$ and $p\geq 1$.
\item For  $h\in H_{r}^{1}([0,T],\mathbb{R}^{d})$ there is a unique mild solution $x_{h}$ to \eqref{Support ODE} such that $x_{h}(s) = \hat{x}(s)$ for all $s\in [0,r]$ and we have $x_{h}\in H_{r}^{1}([0,T],\mathbb{R}^{m})$. In addition, the map $H_{r}^{1}([0,T],\mathbb{R}^{d})\rightarrow H_{r}^{1}([0,T],\mathbb{R}^{m})$, $h\mapsto x_{h}$ is Lipschitz continuous on bounded sets.
\item For $\alpha\in [0,1/2)$ the support of the image measure $P\circ X^{-1}$ in the delayed \holder space $C_{r}^{\alpha}([0,T],\mathbb{R}^m)$ is the closure of the set of all mild solutions $x_{h}$ to~\eqref{Support ODE}, where $h\in H_{r}^{1}([0,T],\mathbb{R}^{d})$. That is,
\begin{equation}\label{Support Equation}
\mathrm{supp}(P\circ X^{-1}) = \overline{\{x_{h}\,|\,h\in H_{r}^{1}([0,T],\mathbb{R}^{d})\}}\quad\text{in $C_{r}^{\alpha}([0,T],\mathbb{R}^{m})$.}
\end{equation}
\end{enumerate}
\end{theorem}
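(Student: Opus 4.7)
My plan is to prove (i) and (ii) by path-dependent analogues of classical fixed-point arguments, and to derive (iii) from a functional Wong--Zakai approximation combined with a Cameron--Martin/Girsanov change of measure. The functional It\^o calculus of Dupire and Cont--Fourni\'e provides the mechanism for identifying the correction term $\tfrac12\rho$ in \eqref{Support ODE} as the It\^o--Stratonovich correction in the path-dependent setting.

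For (i), I would run Picard iteration in the Banach space of $(\mathscr{F}_t)$-adapted continuous processes $Y$ with $Y_{s}=\hat x(s)$ on $[0,r]$, under the norm $(E\sup_{t\leq T_{0}}|Y_{t}|^p)^{1/p}$. The Lipschitz conditions on $b$ and $\sigma$ in the stopped-path sup norm, together with the Burkholder--Davis--Gundy inequality, make the map a contraction on a short interval; gluing extends it to $[r,T]$, and the \holder moment bound follows from Kolmogorov's criterion applied to the drift and diffusion integrals, using BDG again. For (ii), fix $h\in H_{r}^{1}([0,T],\mathbb{R}^{d})$: the right-hand side of \eqref{Support ODE} is Lipschitz in $x$ in the stopped-path sup norm (since $\rho=\partial_{x}\sigma\,\sigma$ inherits Lipschitz continuity and affine growth from the hypotheses on $\sigma$ and $\partial_{x}\sigma$) and lies in $L^{2}([r,T])$ in $t$ because $\dot h\in L^{2}$. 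Picard iteration gives a unique mild solution in $H_{r}^{1}$; Lipschitz continuity on bounded subsets follows by Gronwall, bounding the cross term $\sigma(s,x_{h_{1}})(\dot h_{1}-\dot h_{2})$ in $L^{2}$ by $\|\sigma\|_{\infty}\|h_{1}-h_{2}\|_{H,r}$.

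For (iii), let $\pi_{n}$ be a sequence of partitions of $[r,T]$ with mesh tending to zero and $W^{n}(\omega)$ the continuous, piecewise-linear interpolation of $W(\omega)$ at the nodes of $\pi_{n}$; every sample path of $W^{n}$ lies in $H_{r}^{1}$. The core step is a functional Wong--Zakai convergence theorem asserting that $X^{n}:=x_{W^{n}}$ tends to $X$ in $P$-probability in $C_{r}^{\alpha}$ for every $\alpha\in[0,1/2)$. To establish this, I would compare the integral forms of the two equations and rewrite the discrepancy between the It\^o integral $\int\sigma(\cdot,X)\,dW$ and the Riemann integral $\int\sigma(\cdot,X^{n})\dot W^{n}\,ds$; the functional It\^o formula applied to $\sigma(t,X)$ identifies the cross-variation gap as $\int\rho(s,X)\,ds$, which is precisely cancelled by the $-\tfrac12\rho$ term in \eqref{Support ODE}. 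Residual Taylor-expansion terms are controlled by Gronwall using the polynomial-growth bounds on $\partial_{t}\sigma$ and $\partial_{xx}\sigma$, and sup-norm $L^{p}$ convergence is upgraded to $C_{r}^{\alpha}$ convergence by BDG and Kolmogorov's continuity criterion applied to the \holder seminorms, using the uniform moment bounds from (i). This immediately yields $\mathrm{supp}(P\circ X^{-1})\subseteq\overline{\{x_{h}\}}$, since along a $P$-almost sure subsequence $X(\omega)=\lim_{n}x_{W^{n}(\omega)}$ in $C_{r}^{\alpha}$. For the reverse inclusion, fix $h\in H_{r}^{1}$ and $\varepsilon>0$: by Cameron--Martin/Girsanov, $\tilde W:=W-h$ is a Brownian motion under an equivalent measure $P_{h}$, and the SDE for $X$ under $P_{h}$ acquires drift $b+\sigma\dot h$. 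Applying the same Wong--Zakai argument under $P_{h}$, the approximations $\tilde X^{n}$ coincide with $x_{h+\tilde W^{n}}$ and converge to $X$ in $P_{h}$-probability in $C_{r}^{\alpha}$. On the positive-$P_{h}$-measure event $A_{\delta}:=\{\|\tilde W\|_{\alpha,r}<\delta\}$ (\holder small ball for Brownian motion), the polygonals $\tilde W^{n}$ are uniformly bounded in $C_{r}^{\alpha}$ by a constant times $\delta$; a pathwise comparison of the ODEs for $x_{h+\tilde W^{n}}$ and $x_{h}$, using integration by parts for $\int\sigma\,d\tilde W^{n}$ (legitimate since $\tilde W^{n}$ is of bounded variation) and the functional It\^o regularity of $\sigma$, together with Gronwall's lemma, yields $\|x_{h+\tilde W^{n}}-x_{h}\|_{\alpha,r}\leq C\delta$ uniformly in $n$. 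Intersecting with the Wong--Zakai convergence event produces a $P_{h}$-positive, hence $P$-positive, event on which $\|X-x_{h}\|_{\alpha,r}<\varepsilon$.

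The principal obstacle is the functional Wong--Zakai convergence in the \holder topology: the cancellation of the polygonal cross-variation by $-\tfrac12\rho$ must be extracted pathwise via the functional It\^o formula applied to $\sigma(\cdot,X)$, which is exactly why the $\mathbb{C}^{1,2}$ regularity with polynomial growth on $\partial_{t}\sigma$ and $\partial_{xx}\sigma$ is imposed, and the $L^{p}$-control of residual Taylor-expansion terms hinges on those growth exponents. Upgrading sup-norm convergence to \holder convergence---essential since the support is computed in $C_{r}^{\alpha}$---requires the uniform $L^{p}$ \holder-increment bounds from (i) applied via Kolmogorov's criterion. A secondary technical point is the pathwise estimate on $A_{\delta}$ for the reverse inclusion: Young-integral control is unavailable at $\alpha<1/2$, so one must instead trade it for an integration-by-parts argument exploiting the absolute continuity of the polygonals.
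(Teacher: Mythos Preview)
Your treatment of (i) and (ii) is fine and matches the paper's in spirit; both run Picard iteration, the paper doing so globally via a summable-series bound rather than short-time contraction plus gluing. For the forward inclusion in (iii), your Wong--Zakai scheme is also close to the paper's, although the paper uses the \emph{adapted delayed} piecewise-linear interpolation $L_{n}(W)$ (so that $\null_{n}W$ is $(\mathscr{F}_{t})$-adapted); this adaptedness is what allows the paper to cast both inclusions as special cases of a single convergence theorem for auxiliary SDEs and to invoke the functional It\^o formula along the approximating processes.

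The genuine gap is in your reverse inclusion. You claim that on the H\"older small-ball event $A_{\delta}=\{\|\tilde W\|_{\alpha,r}<\delta\}$ one has $\|x_{h+\tilde W^{n}}-x_{h}\|_{\alpha,r}\le C\delta$ \emph{uniformly in $n$}, to be obtained by integration by parts on $\int\sigma(s,y^{n})\,d\tilde W^{n}_{s}$. But one pass of integration by parts produces the term $\int_{r}^{t}\tilde W^{n}_{s}\,\partial_{x}\sigma(s,y^{n})\,\sigma(s,y^{n})\,\dot{\tilde W}^{n}_{s}\,ds$, i.e.\ an iterated integral of the polygonal against itself weighted by a Lipschitz coefficient. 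On $A_{\delta}$ one only controls $\sup_{s}|\tilde W^{n}_{s}|\le C\delta$, whereas $\int_{r}^{T}|\dot{\tilde W}^{n}_{s}|\,ds=\sum_{i}|\tilde W_{t_{i+1,n}}-\tilde W_{t_{i,n}}|$ is the $1$-variation of the polygonal, which on $A_{\delta}$ is bounded only by $\delta\sum_{i}|\Delta t_{i,n}|^{\alpha}\sim \delta\,|\mathbb{T}_{n}|^{\alpha-1}\to\infty$. A second integration by parts (in $d=1$, say, via $\tilde W^{n}\dot{\tilde W}^{n}=\tfrac12\frac{d}{ds}(\tilde W^{n})^{2}$) pushes the problem to $\int(\tilde W^{n})^{2}\,d\rho(s,y^{n})$, which again contains $\dot{\tilde W}^{n}$ through $\dot y^{n}$ and yields a bound of order $\delta^{3}|\mathbb{T}_{n}|^{\alpha-1}$, still divergent; moreover with only $\mathbb{C}^{1,2}$ regularity on $\sigma$ you cannot iterate further. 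In short, the estimate you need is exactly continuity of the driver-to-solution map in $\alpha$-H\"older norm for $\alpha<1/2$, which is the Young/rough-path obstruction you yourself flag, and the bounded-variation trick does not circumvent it.

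The paper avoids this entirely by following Millet--Sanz-Sol\'e: instead of a single Girsanov shift by $h$ and a pathwise small-ball argument, it builds a \emph{sequence} of equivalent measures $P_{h,n}$ via the Girsanov density of $\dot h-\dot L_{n}(\null_{h,n}W)$ (with $\null_{h,n}W$ obtained from a linear fixed-point so that it is the new Brownian motion). Under $P_{h,n}$, the original solution $X$ itself solves an SDE of the form~\eqref{General Sequence SDE} with $\underline B=b$, $B_{H}=\sigma$, $\overline B=-\sigma$, $\Sigma=\sigma$, and the same $L^{2}$ convergence theorem used for the forward inclusion gives $P_{h,n}(\|X-x_{h}\|_{\alpha,r}\ge\varepsilon)\to0$. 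No pathwise H\"older continuity of $g\mapsto x_{h+g}$ is ever invoked; what makes the scheme close is the decomposition~\eqref{Main Remainder Decomposition} together with the discrete-martingale and moment estimates of Sections~\ref{Convergence of the first two remainders}--\ref{Convergence of the third remainder}, which control the remainders in $L^{2}$ rather than $\omega$-wise.
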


This theorem extends previous results \cite{aida,benarous1994,SupportThm,stroock1972} on the support of diffusion processes to the case of path-dependent coefficients. Moreover, in the diffusion case we retrieve the results of~\cite{benarous1994,SupportThm} under weaker regularity conditions.

Our proof uses the functional It{\^o} calculus~\cite{CF09,cont2012,CF10B, Dupire} to generalize the approach used by Millet and Sanz-Sol\'e \cite{SupportThm} to the path-dependent case, providing the correct Girsanov changes of measures to deduce~\eqref{Support Limit Equality 2}. Based on adapted linear interpolations of Brownian motion, we construct \holder continuous approximations of solutions to~\eqref{SDE} and~\eqref{Support ODE} and show that these approximations converge in \holder norm in probability to the respective solutions. A key ingredient is the functional It{\^o} formula in~\cite{CF09,Dupire,ItoFormula}, combined with interpolation error estimates in supremum norm for stochastic processes.

{\bf Outline}. The remainder of this paper is devoted to the proof of Theorem~\ref{Support Theorem}. Section~\ref{Preliminaries} discusses the various building blocks of the proof. Section~\ref{Non-anticipative functional calculus} recalls several functional calculus concepts from~\cite{CF09,cont2012,CF10B, Dupire} that are useful in our setting. Section~\ref{Mild solutions to path-dependent ODEs} gives conditions for the existence and uniqueness of a mild solution to the path-dependent ODE~\eqref{Support ODE}; Section~\ref{Strong solutions to path-dependent SDEs} gives conditions for the existence of a unique strong solution to the path-dependent SDE~\eqref{SDE}. Section~\ref{Characterization of the support in Hoelder topology} discusses the interpolation method used to characterize the support in the \holder topology.

Section~\ref{Hoelder spaces for stochastic processes} deals with \holder spaces for stochastic processes and the notion of convergence in \holder norm in probability in more depth. Section~\ref{A general Kolmogorov-Chentsov estimate} derives a quantitative version of the Kolmogorov-Chentsov Theorem with an explicit estimate of the \holder norm (Proposition~\ref{Kolmogorov-Chentsov Proposition}). Section~\ref{Convergence along a sequence of partitions} provides a sufficient criterion, based on a sequence of partitions, for convergence in \holder norm in probability (Lemma~\ref{Hoelder Convergence Lemma}). While Section~\ref{Adapted linear interpolations of Brownian motion} discusses adapted linear interpolations of Brownian motion, Section~\ref{Auxiliary convergence results} deduces interpolation error estimates for stochastic processes and several required moment estimates, improving in particular a convergence result from~\cite{SupportThm}[Lemma 3.2].

Section \ref{sec.proofs} proves the existence and uniqueness of mild solutions to path-dependent ODEs (Section~\ref{Proof of Proposition 3}) and strong solutions to path-dependent SDEs (Section~\ref{sec.sdeproof}). Finally, Section~\ref{Proof of the main result} combines these ingredients to give a proof of the main result.

\section{Preliminaries}\label{Preliminaries}

\subsection{Non-anticipative functional calculus}\label{Non-anticipative functional calculus}

Let $D([0,T],\mathbb{R}^{m})$ denote the Banach space of all $\mathbb{R}^{m}$-valued c\`{a}dl\`{a}g maps on $[0,T]$, equipped with the supremum norm $\|\cdot\|_{\infty}$, and recall the following notions from~\cite{cont2012,CF10B}. A functional $G:[r,T)\times D([0,T],\mathbb{R}^{m})\rightarrow\mathbb{R}$ is \emph{non-anticipative} if
\[
\forall t\in [r,T), \  \forall x\in D([0,T],\mathbb{R}^{m}), \quad G(t,x) = G(t,x^{t}),
\]
where $x^{t}=x(t\wedge\cdot)$ is the path $x$ stopped at $t$. Further, $G$ is \emph{boundedness-preserving} if it is  bounded on bounded sets: for each $n\in\mathbb{N}$ there is $c_{n}\geq 0$ such that
\[
\forall t\in [r,T), \  \forall x\in D([0,T],\mathbb{R}^{m}), \quad t\leq Tn/(n+1),\ \|x\|_{\infty}\leq n \quad \Rightarrow \quad |G(t,x)|\leq c_{n}.
\]
We notice that the following pseudometric on $[r,T]\times D([0,T],\mathbb{R}^{m})$ given by
\begin{equation*}
d_{\infty}((t,x),(s,y)):= |t-s|^{1/2} + \|x^{t} - y^{s}\|_{\infty}
\end{equation*}
is complete and if $G$ is $d_{\infty}$-continuous, then it is non-anticipative. As observed in~\cite{ContYi2016}, Lipschitz continuity with respect to $d_{\infty}$ allows for a \holder smoothness of degree $1/2$ in the time variable.

Let us recall the definitions of the horizontal and vertical derivative from~\cite{CF09,Dupire}. A non-anticipative functional $G$ on $[r,T)\times D([0,T],\mathbb{R}^{m})$ is \emph{horizontally differentiable} if for each $t\in [r,T)$ and $x\in D([0,T],\mathbb{R}^{m})$, the function
\[
[0,T-t)\rightarrow\mathbb{R},\quad h\mapsto G(t+h,x^{t})
\]
is differentiable at $0$. In this case, its derivative there is denoted by $\partial_{t}G(t,x)$. We say that $G$ is \emph{vertically differentiable} if for all $t\in [r,T)$ and $x\in D([0,T],\mathbb{R}^{m})$, the function
\[
\mathbb{R}^{m}\rightarrow\mathbb{R},\quad h\mapsto G(t,x+h\mathbbm{1}_{[t,T]})
\]
is differentiable at $0$. If this is the case, then we denote its derivative there by $\partial_{x}G(t,x)$. Hence, $G$ is \emph{partially vertically differentiable} if for every $k\in\{1,\dots,m\}$, $t\in [r,T)$ and $x\in D([0,T],\mathbb{R}^{m})$, the function
\[
\mathbb{R}\rightarrow\mathbb{R},\quad h\mapsto G(t,x+h\overline{e}_{k}\mathbbm{1}_{[t,T]})
\]
is differentiable at $0$, where $\{\overline{e}_{1},\dots,\overline{e}_{m}\}$ is the canonical basis of $\mathbb{R}^{m}$. In this case, its derivative there is denoted by $\partial_{x_{k}}G(t,x)$. If $G$ is vertically differentiable, then it is partially vertically differentiable and $\partial_{x}G=(\partial_{x_{1}}G,\dots,\partial_{x_{m}}G)$.

We call $G$ twice vertically differentiable if it is vertically differentiable and the same holds for $\partial_{x}G$. In this case, we set $\partial_{xx}G := \partial_{x}(\partial_{x}G)$ and
\[
\partial_{x_{k}x_{l}}G:=\partial_{x_{k}}(\partial_{x_{l}}G)\quad\text{for all $k,l\in\{1,\dots,m\}$.}
\]
It follows from Schwarz's Lemma that if $G$ is twice vertically differentiable and $\partial_{xx}G$ is $d_{\infty}$-continuous, then $\partial_{xx}G$ is symmetric: $\partial_{x_{k}x_{l}}G = \partial_{x_{l}x_{k}}G$ for all $k,l\in\{1,\dots,m\}$. Finally, $G$ is \emph{of class $\mathbb{C}^{1,2}$} if it is once horizontally and twice vertically differentiable such that $G$ and its derivatives $\partial_{t}G$, $\partial_{x}G$ and $\partial_{xx}G$ are boundedness-preserving and $d_{\infty}$-continuous.

While horizontal differentiability can be readily extended to functionals that are merely defined on $[r,T)\times C([0,T],\mathbb{R}^{m})$, vertical differentiability is based on c\`{a}dl\`{a}g perturbations, as it involves indicator functions. So, a  functional $F$ on $[r,T)\times C([0,T],\mathbb{R}^{m})$ is said to be of \emph{class $\mathbb{C}^{1,2}$} if it admits an extension $G:[r,T)\times D([0,T],\mathbb{R}^{m})\rightarrow\mathbb{R}$ that satisfies this property. Then it follows from Theorems~5.4.1 and 5.4.2 in~\cite{cont2012} that the definitions
\[
\partial_{x}F :=\partial_{x}G\quad\text{and}\quad\partial_{xx}F:=\partial_{xx}G\quad\text{on $[r,T)\times C([0,T],\mathbb{R}^{m})$}
\]
are independent of the choice of the extension $G$. Based on this uniqueness result, we can apply the functional It{\^o} formula~\cite{ItoFormula}  to prove Proposition~\ref{Main Remainder Proposition 1}, which gives one of the main arguments to characterize the support.

\begin{examples}
(i) Let $\alpha\in (0,1]$ and $\varphi:\mathbb{R}^{m}\rightarrow\mathbb{R}_{+}$ be $\alpha$-\holder continuous. Then the \emph{running supremum functional} $G:[r,T)\times D([0,T],\mathbb{R}^{m})\rightarrow\mathbb{R}$ given by
\[
G(t,x) := \sup_{s\in [0,t]} \varphi(x(s))
\]
is boundedness-preserving and $\alpha$-\holder continuous with respect to $d_{\infty}$. To be more precise, there exists $\lambda \geq 0$ such that
\[
|G(t,x)-G(s,y)|\leq \lambda\|x^{t}-y^{s}\|_{\infty}^{\alpha}
\]
for all $s,t\in [r,T)$ and $x,y\in D([0,T],\mathbb{R}^{m})$. Further, we have $\partial_{t}G=0$. However, if $\varphi$ fails to be differentiable at some $\overline{x}\in\mathbb{R}^{m}$ such that $\varphi(\overline{x} + \overline{h})$ $ > \varphi(\overline{x})$ for all $\overline{h}\in\mathbb{R}^{m}\backslash\{0\}$, then $G$ is not vertically differentiable. For instance, take $\varphi = |\cdot|^{\alpha}$.
\smallskip

\noindent
(ii) Let $\alpha\in (0,1]$, $\varphi:\mathbb{R}^{m}\rightarrow\mathbb{R}^{d}$ be $\alpha$-\holder continuous and $\beta:[r,T)\rightarrow [0,T]$ be right-continuous and satisfy $\beta(t) < t$ for all $t\in (r,T)$. Then the \emph{non-anticipative delayed map} $G:[r,T)\times D([0,T],\mathbb{R}^{m})\rightarrow\mathbb{R}^{d}$ given by
\[
G(t,x):=\varphi((x\circ\beta)(t))
\]
is boundedness-preserving and $\alpha$-\holder continuous in $x\in D([0,T],\mathbb{R}^{m})$, uniformly in $t\in [r,T)$. Unless $\varphi$ is constant, $G$ fails to be horizontally differentiable, yet it is vertically differentiable on $(r,T)\times D([0,T],\mathbb{R}^{m})$ and $\partial_{x}G = 0$ there.
\smallskip

\noindent
(iii) Assume that $\varphi:[0,T)\times\mathbb{R}^{m}\rightarrow\mathbb{R}^{m\times d}$ is a Borel measurable bounded map that is Lipschitz continuous in $\overline{x}\in\mathbb{R}^{m}$, uniformly in $t\in [0,T)$. Then the \emph{non-anticipative integral map} $G:[r,T)\times D([0,T],\mathbb{R}^{m})\rightarrow\mathbb{R}^{m\times d}$ given by
\[
G(t,x) := \int_{0}^{t}\varphi(s,x(s))\,ds
\]
is bounded and $d_{\infty}$-Lipschitz continuous. Moreover, if $\varphi$ is continuous, then $G$ is of class $\mathbb{C}^{1,2}$. In fact, it holds that $\partial_{t}G(t,x) = \varphi(t,x(t))$ for any $t\in [r,T)$ and $x\in D([0,T],\mathbb{R}^{m})$ and $\partial_{x}G = 0$.
\end{examples}

\subsection{Mild solutions to path-dependent ODEs}\label{Mild solutions to path-dependent ODEs}

In this section we show that the ODE~\eqref{Support ODE} admits a unique mild solution which belongs to the delayed Cameron-Martin space $H_{r}^{1}([0,T],\mathbb{R}^{m})$. To this end, let us consider the path-dependent ordinary differential equation:
\begin{equation}\label{General ODE}
\dot{x}(t) = F(t,x) \quad\text{for $t\in [r,T]$,}
\end{equation}
where $F:[r,T]\times C([0,T],\mathbb{R}^{m})\rightarrow\mathbb{R}^{m}$ denotes a non-anticipative product measurable map. Then for each $h\in H_{r}^{1}([0,T],\mathbb{R}^{d})$ the choice $F = b - (1/2)\rho + \sigma\dot{h}$ yields~\eqref{Support ODE} with the correction term $\rho$ given by~\eqref{Support ODE Correction Term}.

In general the map $[r,T]\rightarrow\mathbb{R}^{m}$, $t\mapsto F(t,x)$ may fail to be continuous, so one may not expect solutions in the classical sense. A  \emph{mild solution} to \eqref{General ODE} is a path $x\in C([0,T],\mathbb{R}^{m})$ satisfying
\[
\int_{r}^{T}|F(s,x)|\,ds < \infty\quad\text{and}\quad x(t) = x(r) + \int_{r}^{t}F(s,x)\,ds
\]
for all $t\in [r,T]$. By definition, a mild solution $x$ is absolutely continuous on $[r,T]$ and it becomes a classical solution if and only if the measurable map $[r,T]\rightarrow\mathbb{R}^{m}$, $t\mapsto F(t,x)$ is continuous.

Let us introduce the following regularity conditions, which are satisfied under the assumptions of Theorem~\ref{Support Theorem} for the choice of $F$ mentioned before.

\begin{enumerate}[label=(C.\arabic*), ref=C.\arabic*, leftmargin=\widthof{(C.2)} + \labelsep]
\item\label{C.1} There is a measurable function $c_{0}:[r,T]\rightarrow\mathbb{R}_{+}$ satisfying $\int_{r}^{T}c_{0}(s)^{2}\,ds < \infty$ and
\[
|F(t,x)|\leq c_{0}(t)\bigg(1 + \|x^{r}\|_{\infty} + \int_{r}^{T}|\dot{x}(s)|\,ds\bigg)
\]
for all $t\in [r,T]$ and $x\in C([0,T],\mathbb{R}^{m})$ that is absolutely continuous on $[r,T]$.
\item\label{C.2} For each $n\in\mathbb{N}$ there is a measurable function $\lambda_{n}:[r,T]\rightarrow\mathbb{R}_{+}$ such that $\int_{r}^{T}\lambda_{n}(s)^{2}\,ds < \infty$ and
\[
|F(t,x) - F(t,y)|\leq \lambda_{n}(t)\|x-y\|_{H,r}
\]
for every $t\in [r,T]$ and $x,y\in H_{r}^{1}([0,T],\mathbb{R}^{m})$ with $\|x\|_{H,r}\vee \|y\|_{H,r}\leq n$.
\end{enumerate}

Under the above affine growth condition and Lipschitz smoothness on bounded sets, we obtain a unique mild solution that can be approximated by a Picard iteration in the complete norm $\|\cdot\|_{H,r}$ defined in~\eqref{Delayed Cameron-Martin Norm}.

\begin{proposition}\label{General ODE Proposition}
Let~\eqref{C.1} and~\eqref{C.2} hold. Then~\eqref{General ODE} admits a unique mild solution $x$ satisfying
\[
x(s) = \hat{x}(s)\quad\text{for all $s\in [0,r]$}.
\]
Moreover, $x\in H_{r}^{1}([0,T],\mathbb{R}^{m})$ and the sequence $(x_{n})_{n\in\mathbb{N}_{0}}$ in $H_{r}^{1}([0,T],\mathbb{R}^{m})$, recursively defined by 
\begin{equation*}
x_{0}(t):=\hat{x}(r\wedge t),\qquad
x_{n+1}(t):= x_{0}(t) + \int_{r}^{r\vee t}F(s,x_{n})\,ds
\end{equation*} converges to $x$ in the delayed Cameron-Martin norm $\|\cdot\|_{H,r}$. 
\end{proposition}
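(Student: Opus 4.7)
The plan is to carry out the Picard iteration inside the complete normed space $H_{r}^{1}([0,T],\mathbb{R}^{m})$. Every iterate $x_{n}$ agrees with $\hat{x}$ on $[0,r]$, so $\|x_{n}^{r}\|_{\infty}=\|\hat{x}^{r}\|_{\infty}$ is a fixed constant and the only quantity I need to control is $\int_{r}^{T}|\dot{x}_{n}(s)|^{2}\,ds$. Throughout, I would exploit the non-anticipativeness of $F$ by replacing $F(t,x)$ with $F(t,x^{t})$, which upgrades the growth and local Lipschitz bounds (C.1) and (C.2) to bounds localized over $[r,t]$ rather than $[r,T]$.

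\emph{A priori bound.} Setting $v_{n}(t):=\int_{r}^{t}|\dot{x}_{n}(s)|\,ds$ and $K:=1+\|\hat{x}^{r}\|_{\infty}$, the localized form of (C.1) together with $\dot{x}_{n+1}=F(\cdot,x_{n})$ on $[r,T]$ gives $v_{n+1}(t)\leq\int_{r}^{t}c_{0}(s)(K+v_{n}(s))\,ds$. Since $c_{0}\in L^{2}([r,T])\subset L^{1}([r,T])$, iterating this estimate starting from $v_{0}\equiv 0$ yields $v_{n}(T)\leq K(\exp(\int_{r}^{T}c_{0}\,ds)-1)$, whence $|\dot{x}_{n+1}(t)|\leq c_{0}(t)K\exp(\int_{r}^{T}c_{0})$. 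Squaring and integrating produces a constant $N$ with $\|x_{n}\|_{H,r}\leq N$ for every $n$.

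\emph{Cauchy property.} Fix $\lambda:=\lambda_{N}\in L^{2}([r,T])$ from (C.2). Non-anticipativeness turns the Lipschitz bound into $|F(t,x)-F(t,y)|\leq\lambda(t)\|x^{t}-y^{t}\|_{H,r}$, and for any path difference vanishing on $[0,r]$ one has $\|(x-y)^{t}\|_{H,r}^{2}\leq\int_{r}^{t}|\dot{(x-y)}(s)|^{2}\,ds$. Applied to $y_{n}:=x_{n+1}-x_{n}$ with $w_{n}(t):=\int_{r}^{t}|\dot{y}_{n}(s)|^{2}\,ds$, this yields $w_{n+1}(t)\leq\int_{r}^{t}\lambda(s)^{2}w_{n}(s)\,ds$; iterating in $t$ gives $w_{n}(T)\leq w_{1}(T)(\int_{r}^{T}\lambda^{2})^{n-1}/(n-1)!$. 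Hence $\sum_{n}\|y_{n}\|_{H,r}<\infty$ and $(x_{n})$ is Cauchy in $\|\cdot\|_{H,r}$; the limit $x$ lies in $H_{r}^{1}$.

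\emph{Limit equation and uniqueness.} Cauchy-Schwarz with the localized Lipschitz bound gives $\int_{r}^{T}|F(s,x_{n})-F(s,x)|\,ds\leq(\int_{r}^{T}\lambda^{2})^{1/2}\|x_{n}-x\|_{H,r}\to 0$, so passing to the limit in $x_{n+1}(t)=x_{0}(t)+\int_{r}^{r\vee t}F(s,x_{n})\,ds$ shows that $x$ is a mild solution with the required initial segment. For uniqueness, any mild solution $\tilde{x}$ coinciding with $\hat{x}$ on $[0,r]$ inherits the bound of the first step (run directly on $\tilde{x}$), so $\|\tilde{x}\|_{H,r}\leq N$; applying the localized Lipschitz inequality to $\int_{r}^{t}|\dot{(x-\tilde{x})}(s)|^{2}\,ds$ together with Gronwall's lemma for the $L^{1}$-kernel $\lambda^{2}$ then forces $x=\tilde{x}$. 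The main technical subtlety is that $c_{0}$ and $\lambda_{n}$ are only $L^{2}$ in time, which rules out a direct contraction in $\|\cdot\|_{H,r}$; exploiting non-anticipativeness to keep every estimate localized on $[r,t]$ and extracting the factorial decay from repeated integration of an $L^{2}$ kernel is precisely what delivers summability of the Picard differences.
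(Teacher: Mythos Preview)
Your proof is correct and follows essentially the same strategy as the paper: a uniform a priori bound on the Picard iterates in $H_r^1$, then a factorial-decay estimate $\|x_{n+1}-x_n\|_{H,r}^2\leq \mathrm{const}\cdot(\int_r^T\lambda^2)^{n}/n!$ for the differences using the localized Lipschitz condition, followed by passage to the limit and a Gronwall argument for uniqueness. The only cosmetic difference is that the paper obtains the a priori bound by applying Gronwall directly to $\|x^t\|_{H,r}^2$ (its Lemma on mild solutions) and organizes the argument around an invariant closed set $\mathscr{H}\subset H_r^1$ for the Picard operator, whereas you bound the $L^1$-quantity $v_n(t)=\int_r^t|\dot{x}_n|$ first and square at the end; one should also take $N\in\mathbb{N}$ (e.g.\ the ceiling of your bound) before invoking $\lambda_N$ from~\eqref{C.2}.
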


\subsection{Strong solutions to path-dependent SDEs}\label{Strong solutions to path-dependent SDEs}

We turn to the derivation of a unique strong solution to the SDE~\eqref{SDE} whose paths lie a.s.~in $C_{r}^{\alpha}([0,T],\mathbb{R}^{m})$ for every $\alpha\in [0,1/2)$. By a \emph{solution} to~\eqref{SDE} we mean an $(\mathscr{F}_{t})_{t\in [0,T]}$-adapted continuous process $X:[0,T]\times\Omega\rightarrow\mathbb{R}^{m}$ satisfying
\begin{align*}
&\int_{r}^{T}|b(s,X)|\,ds + \int_{r}^{T}|\sigma(s,X)|^{2}\,ds < \infty\quad\text{a.s.}\\
&\text{and}\quad X_{t} = X_{r} + \int_{r}^{t}b(s,X)\,ds + \int_{r}^{t}\sigma(s,X)\,dW_{s}
\end{align*}
for every $t\in [r,T]$ a.s. A solution 
 is said to be \emph{strong} if it is  adapted to the augmented natural filtration of the underlying Brownian motion $W$.

We now state the required conditions on the coefficients $b$ and $\sigma$, which are valid in the setting of Theorem~\ref{Support Theorem}.

\begin{enumerate}[label=(C.\arabic*),ref=C.\arabic*, leftmargin=\widthof{(C.4)} + \labelsep]
\setcounter{enumi}{2}
\item\label{C.3} There are a measurable function $c_{0}:[r,T]\rightarrow\mathbb{R}_{+}$ and $\tilde{c}_{0}\geq 0$ such that $\int_{r}^{T}c_{0}(s)^{2}\,ds < \infty$,
\[
|b(t,x)|\leq c_{0}(t)(1 + \|x\|_{\infty})\quad\text{and}\quad |\sigma(t,x)|\leq \tilde{c}_{0}(1 + \|x\|_{\infty})
\]
for all $t\in [r,T]$ and $x\in C([0,T],\mathbb{R}^{m})$.
\item\label{C.4} There are $\alpha_{0}\in [0,1/2)$, a measurable function $\lambda_{0}:[r,T]\rightarrow\mathbb{R}_{+}$ and $\tilde{\lambda}_{0}\geq 0$ such that $\int_{r}^{T}\lambda_{0}(s)^{2}\,ds < \infty$,
\begin{align*}
|b(t,x) - b(t,y)|\leq \lambda_{0}(t)\|x-y\|_{\alpha_{0},r}\quad\text{and}\quad |\sigma(t,x) - \sigma(t,y)|\leq\tilde{\lambda}_{0}\|x-y\|_{\alpha_{0},r}
\end{align*}
for each $t\in [r,T]$ and $x,y\in C_{r}^{\alpha_{0}}([0,T],\mathbb{R}^{m})$.
\end{enumerate}

\begin{remark}
If~\eqref{C.4} holds, then it is also true if $\alpha_{0}$ is replaced by any $\alpha\in (\alpha_{0},1]$. Thus, it is strongest in the case that $\alpha_{0}=0$, since $\|\cdot\|_{0}=\|\cdot\|_{\infty}$, by convention.
\end{remark}

Let $\alpha\in [0,1]$ and $p\geq 1$. By Proposition~\ref{Hoelder Completeness Proposition 1}, the space $\mathscr{C}_{r,p}^{\alpha}([0,T],\mathbb{R}^{m})$  of all $(\mathscr{F}_{t})_{t\in [0,T]}$-adapted continuous processes $X:[0,T]\times\Omega\rightarrow\mathbb{R}^{m}$ for which $E[\|X\|_{\alpha,r}^{p}]$ is finite, equipped with the seminorm
\begin{equation}\label{General SDE Seminorm}
\mathscr{C}_{r,p}^{\alpha}([0,T],\mathbb{R}^{m})\rightarrow\mathbb{R}_{+},\quad X\mapsto \big(E\big[\|X\|_{\alpha,r}^{p}\big]\big)^{1/p}
\end{equation}
is complete. Note that if a sequence $(\null_{n}X)_{n\in\mathbb{N}}$ in this seminormed space converges, then it also converges in the norm $\|\cdot\|_{\alpha,r}$ in probability. We define
$$\mathscr{C}_{r,\infty}^{1/2-}([0,T],\mathbb{R}^{m}) :=\mathop{\bigcap}_{\alpha\in [0,1/2),\, p\geq 1} \mathscr{C}_{r,p}^{\alpha}([0,T],\mathbb{R}^{m}),
$$
which is a completely pseudometrizable topological space.

\begin{proposition}\label{General SDE Proposition}
Let~\eqref{C.3} and~\eqref{C.4} be valid. Then, up to indistinguishability, there is a unique strong solution $X$ to~\eqref{SDE} such that
\[
X_{s} = \hat{x}(s)\quad\text{for all $s\in [0,r]$ a.s.}
\]
and it holds that $X\in\mathscr{C}_{r,\infty}^{1/2-}([0,T],\mathbb{R}^{m})$. Furthermore, the sequence $(\null_{n}X)_{n\in\mathbb{N}_{0}}$ in $\mathscr{C}_{r,\infty}^{1/2-}([0,T],\mathbb{R}^{m})$, recursively given by 
\begin{equation*}
\begin{split}\null_{0}X_{t} := \hat{x}(r\wedge t),\qquad
\null_{n+1}X_{t}&= \null_{0}X_{t} + \int_{r}^{r\vee t}b(s,\null_{n}X)\,ds + \int_{r}^{r\vee t}\sigma(s,\null_{n}X)\,dW_{s}
\end{split}
\end{equation*}
converges to $X$ in the norm $\|\cdot\|_{\alpha,r}$ in $p$-th moment for each $\alpha\in [0,1/2)$ and $p\geq 1$:
\[
\lim_{n\uparrow\infty}E[\|\null_{n}X - X\|_{\alpha,r}^{p}]= 0.
\]
\end{proposition}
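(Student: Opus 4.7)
The plan is to run a Picard iteration directly in the Banach space $\mathscr{C}_{r,p}^{\alpha}([0,T],\mathbb{R}^{m})$ for some $\alpha\in(\alpha_{0},1/2)$ and $p$ sufficiently large, and to identify the limit as the desired strong solution $X$. The distinctive feature compared with the classical diffusion setting is that~\eqref{C.4} furnishes Lipschitz continuity of $b$ and $\sigma$ only in the delayed Hölder seminorm $\|\cdot\|_{\alpha_{0},r}$, so the iteration cannot be closed in the supremum norm; the quantitative Kolmogorov-Chentsov estimate of Proposition~\ref{Kolmogorov-Chentsov Proposition} is the bridge that converts $L^{p}$-bounds on increments into explicit Hölder-norm bounds.

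First I would verify by induction that each iterate $\null_{n}X$ is continuous, $(\mathscr{F}_{t}^{W})$-adapted, matches $\hat{x}$ on $[0,r]$ (both integrals in the recursion vanish there), and lies in $\mathscr{C}_{r,p}^{\alpha}$ for every $\alpha<1/2$ and $p\geq 1$. The drift term is Lipschitz in $t$ by~\eqref{C.3}, and BDG combined with~\eqref{C.3} yields $E[|\int_{s}^{t}\sigma(u,\null_{n}X)\,dW_{u}|^{p}]\leq C|t-s|^{p/2}(1+E[\|\null_{n}X\|_{\infty}^{p}])$; Proposition~\ref{Kolmogorov-Chentsov Proposition} then upgrades this to a Hölder-norm bound. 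A standard $p$-th moment Gronwall applied to $t\mapsto E[\sup_{u\in[r,t]}|\null_{n}X_{u}|^{p}]$ furnishes a uniform-in-$n$ bound. For the Cauchy step, set $Y_{n}:=\null_{n}X-\null_{n-1}X$ and $u_{n}(t):=E[\|Y_{n}^{t}\|_{\alpha,r}^{p}]$; using non-anticipativity of $b,\sigma$, the remark after~\eqref{C.4} to replace $\alpha_{0}$ by $\alpha$, BDG, and Proposition~\ref{Kolmogorov-Chentsov Proposition}, I would derive an integral recursion of the form
\begin{equation*}
u_{n+1}(t)\;\leq\; C_{1}\int_{r}^{t}\bigl(\lambda_{0}(s)^{p}+\tilde{\lambda}_{0}^{p}\bigr)\,u_{n}(s)\,ds,
\end{equation*}
whose $n$-fold iteration produces the factorial decay $u_{n}(T)\leq C_{1}^{n}\bigl(\int_{r}^{T}(\lambda_{0}(s)^{p}+\tilde{\lambda}_{0}^{p})\,ds\bigr)^{n}/n!\cdot u_{0}(T)$. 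Summability of $\sum_{n}u_{n}(T)^{1/p}$ yields the Cauchy property in the complete seminormed space $\mathscr{C}_{r,p}^{\alpha}$ (Proposition~\ref{Hoelder Completeness Proposition 1}); passing to the limit in the Picard recursion via dominated convergence then shows the limit $X$ is a strong solution. Pathwise uniqueness follows by applying the same integral inequality to the difference of two solutions and invoking Gronwall.

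The main obstacle is engineering the above recursion at the Hölder level rather than in the supremum norm. Two ingredients are essential: the freedom to enlarge $\alpha_{0}$ to any $\alpha<1/2$ (Remark after~\eqref{C.4}) so that the Lipschitz constant of $b,\sigma$ acts on the norm in which we iterate; and the explicit constants in Proposition~\ref{Kolmogorov-Chentsov Proposition} — any cruder KC estimate that multiplied by an uncontrolled factor $\geq 1$ at each step would destroy the factorial decay. Choosing $p$ large enough that the KC exponent exceeds $\alpha$ with room to spare, and exploiting non-anticipativity so that the Lipschitz estimate for $b(s,\cdot)$ naturally involves only the stopped path $Y_{n}^{s}$, are the technical maneuvers that make the recursion stable under iteration.
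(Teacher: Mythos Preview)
Your approach is essentially the same as the paper's: a Picard iteration carried out directly in $\mathscr{C}_{r,p}^{\alpha}$ with $\alpha_{0}\leq\alpha<1/2-2/p$, using Proposition~\ref{Kolmogorov-Chentsov Proposition} to convert increment bounds into H\"older-norm bounds, obtaining the integral recursion, factorial decay, and completeness (Proposition~\ref{Hoelder Completeness Proposition 1}).

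One technical point needs care: the recursion you write,
\[
u_{n+1}(t)\leq C_{1}\int_{r}^{t}\bigl(\lambda_{0}(s)^{p}+\tilde{\lambda}_{0}^{p}\bigr)\,u_{n}(s)\,ds,
\]
is not available as stated, because~\eqref{C.4} only gives $\lambda_{0}\in L^{2}([r,T])$, so $\lambda_{0}^{p}$ need not be integrable for $p>2$. The paper handles the drift contribution by first applying Cauchy--Schwarz to pull out $(\int_{r}^{T}\lambda_{0}(s)^{2}\,ds)^{p/2}$ as a constant, leading to a recursion with constant kernel $c_{\alpha,p}\overline{c}_{p}$ rather than a weighted one; the factorial decay then involves powers of $(T-r)$. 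Relatedly, the drift integral is only $1/2$-H\"older in $t$ (via Cauchy--Schwarz against $c_{0}\in L^{2}$), not Lipschitz as you state --- but this is exactly what is needed. For pathwise uniqueness via your Gronwall argument you also need to know a priori that any solution has finite H\"older moments, which the paper secures separately in Lemma~\ref{General SDE Lemma 1}.
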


\begin{remark}
Pathwise uniqueness for~\eqref{SDE} is shown in Lemma~\ref{General SDE Lemma 2}, requiring only the following Lipschitz condition on bounded sets, which follows from~\eqref{C.4} in the strongest case $\alpha_{0} =0$:
\begin{enumerate}[label=(C.\arabic*),ref=C.\arabic*,leftmargin=\widthof{(C.5)} + \labelsep]
\setcounter{enumi}{4}
\item\label{C.5} For each $n\in\mathbb{N}$ there is a measurable function $\lambda_{n}:[r,T]\rightarrow\mathbb{R}_{+}$ satisfying $\int_{r}^{T}\lambda_{n}(s)^{2}\,ds < \infty$ and
\[
|b(t,x) - b(t,y)| + |\sigma(t,x) - \sigma(t,y)|\leq \lambda_{n}(t)\|x-y\|_{\infty}
\]
for every $t\in [r,T]$ and $x,y\in C([0,T],\mathbb{R}^{m})$ with $\|x\|_{\infty}\vee\|y\|_{\infty}\leq n$.
\end{enumerate}
\end{remark}

\subsection{Characterization of the support in \holder topology}\label{Characterization of the support in Hoelder topology}

Sections~\ref{Mild solutions to path-dependent ODEs} and~\ref{Strong solutions to path-dependent SDEs} provide the main arguments to prove the first two assertions of Theorem~\ref{Support Theorem}. Let us now describe how we will prove the characterization~\eqref{Support Equation} of the support. For $n\in\mathbb{N}$ let $\mathbb{T}_{n}$ be a partition of $[r,T]$ that we write in the form
\[
\mathbb{T}_{n}=\{t_{0,n},\dots,t_{k_{n,n}}\}
\]
for some $k_{n}\in\mathbb{N}$ and $t_{0,n},\dots,t_{k_{n},n}\in [r,T]$ such that $r=t_{0,n} < \cdots < t_{k_{n},n} = T$ and we denote its mesh by $|\mathbb{T}_{n}|=\max_{i\in\{0,\dots,k_{n}-1\}} (t_{i+1,n} - t_{i,n})$. We assume that $\lim_{n\uparrow\infty} |\mathbb{T}_{n}| = 0$ and that the sequence $(\mathbb{T}_{n})_{n\in\mathbb{N}}$ of partitions is \emph{balanced} in the sense of~\cite{das17}. That is, there is $c_{\mathbb{T}}\geq 1$ such that
\begin{equation}\label{Partition Condition}
|\mathbb{T}_{n}|\leq c_{\mathbb{T}} \min_{i\in\{0,\dots,k_{n}-1\}} (t_{i+1,n} - t_{i,n}) \quad\text{for all $n\in\mathbb{N}$.}
\end{equation}
For $k,n\in\mathbb{N}$ and every path $x:[0,T]\rightarrow\mathbb{R}^{k}$ we let $L_{n}(x):[0,T]\rightarrow\mathbb{R}^{k}$ denote the unique map satisfying $L_{n}(x)(t)=x(r\wedge t)$ for $t\in [0,t_{1,n}]$ and
\begin{equation}\label{Delayed Linear Interpolation Operator}
L_{n}(x)(t)= x(t_{i-1,n}) + \frac{t-t_{i,n}}{t_{i+1,n}-t_{i,n}}(x(t_{i,n}) - x(t_{i-1,n}))
\end{equation}
for $i\in\{1,\dots,k_{n}-1\}$ and $t\in [t_{i,n},t_{i+1,n}]$. Then $L_{n}(x)$ is piecewise continuously differentiable on $[r,T]$ and can be regarded as \emph{delayed linear interpolation} of $x$ along $\mathbb{T}_{n}$ on this interval. Hence, we may define an adapted process $\null_{n}W:[0,T]\times\Omega\rightarrow\mathbb{R}^{d}$, whose paths belong to $H_{r}^{1}([0,T],\mathbb{R}^{d})$, by setting
\begin{equation}\label{Adapted Linear Interpolation of Brownian Motion Definition}
\null_{n}W_{t} := L_{n}(W)(t).
\end{equation}

Let us for the moment suppose that the assumptions and the first two claims of Theorem~\ref{Support Theorem} hold. To establish that $\mathrm{supp}(P\circ X^{-1})$ is included in the closure of $\{x_{h}\,|\,h\in H_{r}^{1}([0,T],\mathbb{R}^{d})\}$ in $C_{r}^{\alpha}([0,T],\mathbb{R}^{m})$ for $\alpha\in [0,1/2)$, we will justify in Section~\ref{Proofs of Theorem 7 and 1} that it suffices to check that
\begin{equation}\label{Support Limit Equality 1}
\lim_{n\uparrow\infty} P(\|x_{\null_{n}W} - X\|_{\alpha,r}\geq \varepsilon) =0\quad\text{for all $\varepsilon > 0$.}
\end{equation}
We remark that, by the definition of a mild solution to~\eqref{General ODE}, for each $n\in\mathbb{N}$ the adapted process $x_{\null_{n} W}$, whose paths lie in $H_{r}^{1}([0,T],\mathbb{R}^{m})$, is a strong solution to the degenerate SDE
\[
d\null_{n}Y_{t} = \big((b - (1/2)\rho)(t,\null_{n}Y) + \sigma(t,\null_{n}Y)\null_{n}\dot{W}_{t}\big)\,dt\quad\text{for $t\in [r,T]$}
\]
with initial condition $\null_{n}Y^{r} = \hat{x}^{r}$. Next, we will show that the converse inclusion in~\eqref{Support Equation} follows if for every $h\in H_{r}^{1}([0,T],\mathbb{R}^{d})$ we can find a sequence $(P_{h,n})_{n\in\mathbb{N}}$ of probability measures on $(\Omega,\mathscr{F})$ that are absolutely continuous to $P$ such that
\begin{equation}\label{Support Limit Equality 2}
\lim_{n\uparrow\infty} P_{h,n}(\|X - x_{h}\|_{\alpha,r}\geq\varepsilon) = 0\quad\text{for any $\varepsilon > 0$.}
\end{equation}
We emphasize that for each $n\in\mathbb{N}$ the measure $P_{h,n}$ will be constructed by means of Girsanov's theorem such that $X$ is a strong solution to the SDE
\begin{equation}\label{Girsanov SDE}
d\null_{n}Y_{t} = \big(b(t,\null_{n}Y) + \sigma(t,\null_{n}Y)(\dot{h} - \dot{L}_{n}(\null_{h,n}W))(t)\big)\,dt + \sigma(t,\null_{n}Y)\,d\null_{h,n}W_{t}
\end{equation}
for $t\in [r,T]$ under $P_{h,n}$ with initial condition $\null_{n}Y^{r} = \hat{x}^{r}$, where $\null_{h,n}W$ is some standard $d$-dimensional $(\mathscr{F}_{t})_{t\in [0,T]}$-Brownian motion under $P_{h,n}$. Hence, to prove \eqref{Support Limit Equality 1} and \eqref{Support Limit Equality 2} at the same time, we consider the following general framework.

Assume   $\underline{B}$ is an $\mathbb{R}^{m}$-valued and $B_{H}$, $\overline{B}$ and $\Sigma$ are $\mathbb{R}^{m\times d}$-valued non-anticipative product measurable maps on $[r,T]\times C([0,T],\mathbb{R}^{m})$. Then for each $n\in\mathbb{N}$ we introduce the SDE
\begin{equation}\label{General Sequence SDE}
\begin{split}
d\null_{n}Y_{t} &= \big(\underline{B}(t,\null_{n}Y) + B_{H}(t,\null_{n}Y)\dot{h}(t) + \overline{B}(t,\null_{n}Y)\null_{n}\dot{W}_{t}\big)\,dt + \Sigma(t,\null_{n}Y)\,dW_{t}
\end{split}
\end{equation}
for $t\in [r,T]$. Under the hypothesis that $\overline{B}$ is of class $\mathbb{C}^{1,2}$ on $[r,T)\times C([0,T],\mathbb{R}^{m})$, we also consider the SDE
\begin{equation}\label{General Limit SDE}
\begin{split}
dY_{t} &= \big((\underline{B} + R)(t,Y) + B_{H}(t,Y)\dot{h}(t)\big)\,dt + (\overline{B}+\Sigma)(t,Y)\,dW_{t}
\end{split}
\end{equation}
for $t\in [r,T]$, where the $\mathbb{R}^{m}$-valued non-anticipative product measurable map $R$ on $[r,T]\times C([0,T],\mathbb{R}^{m})$ is given coordinatewise by
\begin{equation}\label{General Remainder Term}
R_{k}(t,x) := \sum_{l=1}^{d}\partial_{x}\overline{B}_{k,l}(t,x)((1/2)\overline{B} + \Sigma)(t,x)e_{l},
\end{equation}
if $t < T$, and $R_{k}(t,x):=0$, otherwise. In Theorem~\ref{General SDE Convergence Theorem} we show that whenever $\null_{n}Y$ and $Y$ are two solutions to~\eqref{General Sequence SDE} and~\eqref{General Limit SDE}, respectively, such that $\null_{n}Y^{r}=Y^{r}=\hat{x}^{r}$ a.s.~for all $n\in\mathbb{N}$, then
\begin{equation}\label{General SDE Hoelder Limit Equation}
\lim_{n\uparrow\infty} P(\|\null_{n}Y - Y\|_{\alpha,r}\geq\varepsilon) = 0\quad\text{for each $\varepsilon > 0$.}
\end{equation}
{Then the choice $\underline{B} = b - (1/2)\rho$, $B_{H} = 0$, $\overline{B}=\sigma$ and $\Sigma = 0$ yields~\eqref{Support Limit Equality 1}, since $R = (1/2)\rho$ in this case. Moreover, by choosing $\underline{B} = b$, $B_{H} = \sigma$, $\overline{B}=-\sigma$ and $\Sigma = \sigma$ instead,~\eqref{Support Limit Equality 2} follows. Since these are the two desired results, let us consider the following required conditions:}
\begin{enumerate}[label=(C.\arabic*), ref=C.\arabic*, leftmargin=\widthof{(8)} + \labelsep]
\setcounter{enumi}{5}
\item\label{C.6} $\overline{B}$ is of class $\mathbb{C}^{1,2}$ on $[r,T)\times C([0,T],\mathbb{R}^{m})$ and there are $c,\eta\geq 0$ and $\kappa\in [0,1)$ such that $|\underline{B}(t,x)| + |B_{H}(t,x)|\leq c(1 + \|x\|_{\infty}^{\kappa})$,
\begin{align*}
|\overline{B}(t,x)| + \bigg(\sum_{k=1}^{m}\sum_{l=1}^{d}|\partial_{x}\overline{B}_{k,l}(t,x)|^{2}\bigg)^{1/2} + |\Sigma(t,x)|&\leq c\quad\text{and}\\
|\partial_{t}\overline{B}(t,x)| + \bigg(\sum_{k=1}^{m}\sum_{l=1}^{d}|\partial_{xx}\overline{B}_{k,l}(t,x)|^{2}\bigg)^{1/2}&\leq c(1 + \|x\|_{\infty}^{\eta})
\end{align*}
for all $t\in [r,T)$ and $x\in C([0,T],\mathbb{R}^{m})$.

\item\label{C.7} $\underline{B}$ is Lipschitz continuous in $x\in C([0,T],\mathbb{R}^{m})$, uniformly in $t\in [r,T)$, and $B_{H}$, $\overline{B}$, $\partial_{x}\overline{B}$ and $\Sigma$ are $d_{\infty}$-Lipschitz continuous.

\item\label{C.8} There are $\overline{b}_{0}\in\mathbb{R}$ and a measurable function $\overline{b}:[r,T]\rightarrow\mathbb{R}$ such that $\int_{r}^{T}|\overline{b}(s)|^{2}\,ds$ $ < \infty$ and $\overline{b}_{0}\overline{B}(t,x) = \overline{b}(t)\Sigma(t,x)$ for all $t\in [r,T)$ and $x\in C([0,T],\mathbb{R}^{m})$.
\end{enumerate}

\begin{theorem}\label{General SDE Convergence Theorem}
Let~\eqref{C.6}-\eqref{C.8} be satisfied, $h\in H_{r}^{1}([0,T],\mathbb{R}^{d})$ and $(x_{n})_{n\in\mathbb{N}}$ be a bounded sequence in $C([0,T],\mathbb{R}^{m})$. Then the following three assertions hold:
\begin{enumerate}[(i)]
\item For any $n\in\mathbb{N}$ there is a unique strong solution $\null_{n}Y$ to~\eqref{General Sequence SDE} such that $\null_{n}Y^{r}$ $= x_{n}^{r}$ a.s. Moreover, $\sup_{n\in\mathbb{N}}E[\|\null_{n}Y\|_{\alpha,r}^{p}] < \infty$ for all $\alpha\in [0,1/2)$ and $p\geq 1$.
\item There is a unique strong solution $Y$ to~\eqref{General Limit SDE} such that $Y^{r} = \hat{x}^{r}$ a.s. and we have $E[\|Y\|_{\alpha,r}^{p}] < \infty$ for every $\alpha\in [0,1/2)$ and $p\geq 1$.
\item Let $\alpha\in [0,1/2)$ satisfy $\lim_{n\uparrow\infty}\|x_{n}^{r} - \hat{x}^{r}\|_{\infty}/|\mathbb{T}_{n}|^{\alpha}= 0$. Then
\begin{equation}\label{General SDE Partition Limit Equation}
\lim_{n\uparrow\infty}E\big[\max_{j\in\{0,\dots,k_{n}\}}|\null_{n}Y_{t_{j,n}} - Y_{t_{j,n}}|^{2}\big]/|\mathbb{T}_{n}|^{2\alpha} = 0.
\end{equation}
In particular,~\eqref{General SDE Hoelder Limit Equation} holds: $(\null_{n}Y)_{n\in\mathbb{N}}$ converges in the norm $\|\cdot\|_{\alpha,r}$ in probability to $Y$.
\end{enumerate}
\end{theorem}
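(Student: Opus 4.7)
My plan is to derive parts (i) and (ii) from Proposition~\ref{General SDE Proposition}, and to prove (iii) by identifying $R$ as an It\^o-type correction via the functional It\^o formula, combined with a discrete Gronwall estimate at the partition points $t_{j,n}$ and a lift to the H\"older topology through Lemma~\ref{Hoelder Convergence Lemma}.

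For (i), I view \eqref{General Sequence SDE} as a path-dependent SDE with adapted coefficients. The deterministic drift $\underline{B}$ and diffusion $\Sigma$ satisfy \eqref{C.3}--\eqref{C.4} directly from \eqref{C.6}--\eqref{C.7}, while the random-but-adapted part $B_{H}(t,\cdot)\dot{h}(t)+\overline{B}(t,\cdot)\,\null_{n}\dot{W}_{t}$ is bounded in $x$ by $c(1+\|x\|_{\infty}^{\kappa})|\dot{h}(t)|+c|\null_{n}\dot{W}_{t}|$ and Lipschitz in $x$ with constant proportional to $|\dot{h}(t)|+|\null_{n}\dot{W}_{t}|$. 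Since $\dot{h},\null_{n}\dot{W}\in L^{2}([r,T])$ (the latter almost surely), Proposition~\ref{General SDE Proposition} applies pathwise. The uniform moment bound $\sup_{n}E[\|\null_{n}Y\|_{\alpha,r}^{p}]<\infty$ follows by writing $\int_{r}^{\cdot}\overline{B}(s,\null_{n}Y)\null_{n}\dot{W}_{s}\,ds$ as a Stieltjes integral against $d\null_{n}W_{s}$ and exploiting the $L^{p}$-uniform boundedness of $\|\null_{n}W\|_{\alpha,r}$ from Section~\ref{Adapted linear interpolations of Brownian motion}, combined with standard Burkholder-Davis-Gundy estimates on the $\Sigma\,dW$-integral and Gronwall. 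Part (ii) is a direct application of Proposition~\ref{General SDE Proposition} to \eqref{General Limit SDE}: \eqref{C.6} gives that $R$ is bounded, and $(\underline{B}+R)+B_{H}\dot{h}$ together with the diffusion $\overline{B}+\Sigma$ satisfies \eqref{C.3}--\eqref{C.4} with $\alpha_{0}=0$.

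The core of the proof is (iii). Setting $Z_{t}:=\null_{n}Y_{t}-Y_{t}$, the SDE-difference yields the $\underline{B}$- and $B_{H}\dot{h}$-differences, controlled through \eqref{C.7} by $\|Z^{s}\|_{\infty}$; the $\Sigma$-difference, controlled via Burkholder-Davis-Gundy and \eqref{C.7}; and the principal residual
\[
E_{n}(t) := \int_{r}^{t} \overline{B}(s,\null_{n}Y)\,\null_{n}\dot{W}_{s}\,ds - \int_{r}^{t} \overline{B}(s,Y)\,dW_{s} - \int_{r}^{t} R(s,Y)\,ds.
\]
After replacing $\overline{B}(s,\null_{n}Y)$ by $\overline{B}(s,Y)$ at the cost of a Lipschitz term bounded by $\|Z^{s}\|_{\infty}\cdot|\null_{n}\dot{W}_{s}|$, the key step is to apply the functional It\^o formula (the content of Proposition~\ref{Main Remainder Proposition 1}) to each coordinate $\overline{B}_{k,l}(\cdot,Y)$, whose martingale part has diffusion coefficient $\partial_{x}\overline{B}_{k,l}(s,Y)(\overline{B}+\Sigma)(s,Y)$. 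A summation-by-parts of $\int_{r}^{t} \overline{B}_{k,l}(s,Y)\,\null_{n}\dot{W}_{s}\,ds$ over the subintervals $[t_{i,n},t_{i+1,n}]$, exploiting the delayed linear interpolation structure of $\null_{n}W$ and condition \eqref{C.8}, produces in the limit precisely the drift $\sum_{l}\partial_{x}\overline{B}_{k,l}((1/2)\overline{B}+\Sigma)e_{l}=R_{k}$ in addition to the It\^o integral $\int_{r}^{t}\overline{B}(s,Y)\,dW_{s}$. A discrete Gronwall inequality applied to $E[\max_{j\leq k_{n}}|Z_{t_{j,n}}|^{2}]/|\mathbb{T}_{n}|^{2\alpha}$ then gives \eqref{General SDE Partition Limit Equation}, and \eqref{General SDE Hoelder Limit Equation} follows by Lemma~\ref{Hoelder Convergence Lemma} applied to $\null_{n}Y-Y$, using \eqref{C.3}--\eqref{C.4}, the uniform moment bounds, and the fact that $\null_{n}W\to W$ in H\"older norm in probability.

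The main obstacle is the rigorous implementation of the functional It\^o calculation producing $R$. One must carefully track how the delayed Brownian increments $W_{t_{i,n}}-W_{t_{i-1,n}}$ hidden inside $\null_{n}\dot{W}_{s}$ correlate with future evaluations $\overline{B}(s,Y)$ on $[t_{i,n},t_{i+1,n}]$; the peculiar factor $(1/2)\overline{B}+\Sigma$ in~\eqref{General Remainder Term} arises because only the $\Sigma$-part of $Y$'s diffusion contributes with a full covariation coefficient, while the $\overline{B}$-part enters with a factor of $1/2$ due to the partial overlap of the interpolation intervals. Condition \eqref{C.8} and the balanced-partition assumption~\eqref{Partition Condition} are essential for keeping this remainder uniformly small in both time and the H\"older exponent $\alpha$.
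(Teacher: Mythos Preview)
Your treatment of (ii) matches the paper. For (i), however, Proposition~\ref{General SDE Proposition} is stated for \emph{deterministic} non-anticipative coefficients $b,\sigma$ on $[r,T]\times C([0,T],\mathbb{R}^{m})$; the drift $\overline{B}(t,\cdot)\,\null_{n}\dot{W}_{t}$ is genuinely random and cannot be handled ``pathwise'' once the stochastic integral $\int\Sigma\,dW$ is present. This is precisely why condition~\eqref{C.8} is imposed: the paper uses it to perform a Girsanov change of measure (Lemma~\ref{Doleans-Dade Exponential Lemma}) that absorbs $\overline{B}\,\null_{n}\dot{W}$ into the Brownian motion, reducing \eqref{General Sequence SDE} to an SDE with deterministic coefficients to which Proposition~\ref{General SDE Proposition} applies cleanly. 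You invoke~\eqref{C.8} only in part (iii), where in fact it is not needed (see the remark following the theorem); its role is entirely in (i).

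The more serious gap is in (iii). Your first move is to replace $\overline{B}(s,\null_{n}Y)$ by $\overline{B}(s,Y)$ inside the $\null_{n}\dot{W}$-integral, at a cost you estimate by $\int_{r}^{t}\|Z^{s}\|_{\infty}\,|\null_{n}\dot{W}_{s}|\,ds$. But $|\null_{n}\dot{W}_{s}|$ has $L^{p}$-moments of order $|\mathbb{T}_{n}|^{-1/2}$ (Lemma~\ref{Brownian Interpolation Integral Moments Lemma}), so this term enters the Gronwall inequality with a coefficient that blows up as $n\uparrow\infty$, and the argument does not close. The paper avoids this entirely: in Proposition~\ref{General Convergence Proposition 3} the term $\int\overline{B}(s,\null_{n}Y)\,d\null_{n}W_{s}$ is kept with $\null_{n}Y$ as argument, and only the \emph{frozen} piece $\overline{B}(\underline{s}_{n},\null_{n}Y)$ is compared against $\overline{B}(s,Y)$---via an It\^o integral $dW_{s}$, not $d\null_{n}W_{s}$---so the Gronwall coefficient stays bounded uniformly in $n$. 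The residual $\int(\overline{B}(s,\null_{n}Y)-\overline{B}(\underline{s}_{n},\null_{n}Y))\,\null_{n}\dot{W}_{s}\,ds - \int R(\underline{s}_{n},\null_{n}Y)\gamma_{n}(s)\,ds$ is then handled by the decomposition~\eqref{Main Remainder Decomposition}, where the functional It\^o formula is applied to $\null_{n}Y$ (Proposition~\ref{Main Remainder Proposition 1}), not to $Y$ as you propose. The factor $1/2$ in front of $\overline{B}$ within $R$ arises not from ``partial overlap of interpolation intervals'' but from the elementary identity $\int_{t_{i,n}}^{t_{i+1,n}}(s-t_{i,n})\,ds=(1/2)(\Delta t_{i+1,n})^{2}$ in the proof of Proposition~\ref{Main Remainder Proposition 5}.
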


\begin{remark}
{Condition~\eqref{C.8} allows us to perform a change of measure to obtain a unique strong solution to~\eqref{General Sequence SDE}. However, when deriving~\eqref{General SDE Partition Limit Equation} in Sections~\ref{Decomposition into remainder terms},~\ref{Convergence of the first two remainders} and~\ref{Convergence of the third remainder}, we merely assume~\eqref{C.6}~and~\eqref{C.7}.}
\end{remark}

\section{Convergence in \holder norm in probability}\label{Convergence in Hoelder norm in probability}

\subsection{H\"{o}lder spaces for stochastic processes}\label{Hoelder spaces for stochastic processes}

For $\alpha\in [0,1]$ we let $\mathscr{C}_{r}^{\alpha}([0,T],\mathbb{R}^{m})$ denote the linear space of all adapted continuous processes $X:[0,T]\times\Omega\rightarrow\mathbb{R}^{m}$ satisfying $X\in C_{r}^{\alpha}([0,T],\mathbb{R}^{m})$ a.s., endowed with the pseudometric
\begin{equation}\label{Hoelder Probability Pseudometric}
\begin{split}
&\mathscr{C}_{r}^{\alpha}([0,T],\mathbb{R}^{m})\times \mathscr{C}_{r}^{\alpha}([0,T],\mathbb{R}^{m})\rightarrow\mathbb{R}_{+},\\
&(X,Y)\mapsto E[\|X - Y\|_{\alpha,r}\wedge 1].
\end{split}
\end{equation}
Then a sequence $(\null_{n}X)_{n\in\mathbb{N}}$ in this space converges to some $X\in\mathscr{C}_{r}^{\alpha}([0,T],\mathbb{R}^{m})$ if and only if it converges in the norm $\|\cdot\|_{\alpha,r}$ in probability to this process. Put differently, $(\|\null_{n}X - X\|_{\alpha,r})_{n\in\mathbb{N}}$ converges in probability to zero. Further, $(\null_{n}X)_{n\in\mathbb{N}}$ is Cauchy if and only if
\[
\lim_{k\uparrow\infty}\sup_{n\in\mathbb{N}:\,n\geq k} P(\|\null_{n}X - \null_{k}X\|_{\alpha,r}\geq\varepsilon) = 0\quad\text{for all $\varepsilon > 0$.}
\]
To abbreviate notation, we set $\mathscr{C}([0,T],\mathbb{R}^{m}):=\mathscr{C}_{r}^{0}([0,T],\mathbb{R}^{m})$, which represents the linear space of all $\mathbb{R}^{m}$-valued adapted continuous processes. The fact that $\mathscr{C}([0,T],\mathbb{R}^{m})$ is complete can be extended, as the following result shows.

\begin{lemma}\label{Hoelder Completeness Lemma 1}
The linear space $\mathscr{C}_{r}^{\alpha}([0,T],\mathbb{R}^{m})$ endowed with the pseudometric~\eqref{Hoelder Probability Pseudometric} is complete.
\end{lemma}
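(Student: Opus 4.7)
The plan is to mimic the classical proof that $L^0$ is complete under convergence in probability: given a Cauchy sequence $(\null_{n}X)_{n\in\mathbb{N}}$ in the pseudometric \eqref{Hoelder Probability Pseudometric}, extract a fast subsequence via a diagonal $2^{-k}$ scheme, promote it to almost-sure convergence in the delayed \holder norm through Borel--Cantelli, and then use the Banach-space property of $(C_{r}^{\alpha}([0,T],\mathbb{R}^{m}),\|\cdot\|_{\alpha,r})$ to identify the pathwise limit.

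Concretely, I would first translate the Cauchy condition in the pseudometric into the equivalent statement that for every $\varepsilon>0$, $\lim_{k\uparrow\infty}\sup_{n\geq k}P(\|\null_{n}X-\null_{k}X\|_{\alpha,r}\geq\varepsilon)=0$. From this I pick indices $n_{1}<n_{2}<\cdots$ such that $P(\|\null_{n_{k+1}}X-\null_{n_{k}}X\|_{\alpha,r}\geq 2^{-k})\leq 2^{-k}$. Borel--Cantelli then yields a set $\Omega_{0}$ of full measure on which the series $\sum_{k}\|\null_{n_{k+1}}X(\omega)-\null_{n_{k}}X(\omega)\|_{\alpha,r}$ converges, so $(\null_{n_{k}}X(\omega))_{k}$ is Cauchy in the Banach space $C_{r}^{\alpha}([0,T],\mathbb{R}^{m})$ and admits a limit $X(\omega)$ therein. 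I set $X(\omega):=0$ on $\Omega\setminus\Omega_{0}$.

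The next step is to verify that $X$ belongs to $\mathscr{C}_{r}^{\alpha}([0,T],\mathbb{R}^{m})$. Continuity is automatic since $X\in C_{r}^{\alpha}([0,T],\mathbb{R}^{m})\subset C([0,T],\mathbb{R}^{m})$ on $\Omega_{0}$ and $X\equiv 0$ otherwise. For adaptedness, I note that for each $t\in[0,T]$, the random variable $X_{t}$ agrees a.s.~with the pointwise limit $\lim_{k}\null_{n_{k}}X_{t}$ of $\mathscr{F}_{t}$-measurable random variables, so completeness of $(\mathscr{F}_{t})_{t\in[0,T]}$ (part of the usual conditions) gives $\mathscr{F}_{t}$-measurability. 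Finally, $\|X\|_{\alpha,r}<\infty$ a.s.~follows from the a.s.~convergence in $\|\cdot\|_{\alpha,r}$.

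It remains to upgrade a.s.~convergence of the subsequence to convergence of the full sequence in the pseudometric. Since $\null_{n_{k}}X\to X$ a.s.~in $\|\cdot\|_{\alpha,r}$, in particular $(\null_{n_{k}}X)$ converges to $X$ in probability in that norm; combined with the Cauchy property of $(\null_{n}X)_{n\in\mathbb{N}}$ (via a standard $\varepsilon/2$ argument), the whole sequence converges to $X$ in probability in $\|\cdot\|_{\alpha,r}$, which is exactly convergence in the pseudometric \eqref{Hoelder Probability Pseudometric} after dominated convergence on the bounded functional $\cdot\wedge 1$. The only subtle point is the adaptedness of $X$ on the exceptional null set, which is handled cleanly by the assumption that the filtration is augmented; beyond that, the argument is a transcription of the classical $L^{0}$-completeness proof with the Banach space $C_{r}^{\alpha}([0,T],\mathbb{R}^{m})$ playing the role of the codomain.
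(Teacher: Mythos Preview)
Your proof is correct and follows the standard $L^{0}$-over-a-Banach-space template: extract a fast subsequence by Borel--Cantelli, invoke completeness of $(C_{r}^{\alpha}([0,T],\mathbb{R}^{m}),\|\cdot\|_{\alpha,r})$ pathwise, and then pass from the subsequence to the full sequence. The paper takes a genuinely different route. It first cites the sup-norm case (Lemma~4.3.3 in Stroock--Varadhan) to obtain an adapted continuous limit $X$ with $\|\null_{n}X-X\|_{\infty}\to 0$ in probability, and then upgrades to $\|\cdot\|_{\alpha,r}$-convergence by an approximation argument: for each $l\in\mathbb{N}$ it restricts the \holder quotient to pairs $s,t$ with $|s-t|\geq\delta_{l}:=(T-r)/l$, where a sup-norm bound of size $(\varepsilon/4)\delta_{l}^{\alpha}$ suffices to control the \holder difference, and then lets $l\uparrow\infty$ via continuity of measures. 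Your argument is arguably cleaner and self-contained, since it does not rely on an external reference and works directly in the target norm; the paper's approach, on the other hand, makes the adaptedness of $X$ automatic (it comes out of the cited lemma) and gives a slightly more quantitative picture of how \holder convergence in probability is controlled by sup-norm convergence plus the \holder Cauchy condition.
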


\begin{proof}
Let $(\null_{n}X)_{n\in\mathbb{N}}$ be a Cauchy sequence in $\mathscr{C}_{r}^{\alpha}([0,T],\mathbb{R}^{m})$. By Lemma 4.3.3 in~\cite{DiffPro} for instance, there is $X\in\mathscr{C}([0,T],\mathbb{R}^{m})$ to which $(\null_{n}X)_{n\in\mathbb{N}}$ converges uniformly in probability. For given $\varepsilon, \eta > 0$ there is $n_{0}\in\mathbb{N}$ such that
\[
P\bigg(\sup_{s,t\in [r,T]:\,s\neq t} \frac{|(\null_{n}X_{s} - \null_{k}X_{s}) - (\null_{n}X_{t} - \null_{k}X_{t})|}{|s-t|^{\alpha}} \geq \frac{\varepsilon}{2}\bigg) < \frac{\eta}{2}
\]
for all $k,n\in\mathbb{N}$ with $k\wedge n\geq n_{0}$. We fix $l\in\mathbb{N}$ and set $\delta_{l}:=(T-r)/l$, then there exists $k_{l}\in\mathbb{N}$ such that $k_{l}\geq n_{0}$ and $P(\|\null_{k_{l}}X-X\|_{\infty}\geq (\varepsilon/4)\delta_{l}^{\alpha}) < \eta/2$. Hence,
\[
P\bigg(\sup_{s,t\in [r,T]:\, |s-t|\geq \delta_{l}} \frac{|(\null_{n}X_{s} - X_{s}) - (\null_{n}X_{t} - X_{t})|}{|s-t|^{\alpha}} > \varepsilon\bigg) < \eta
\]
for each $n\in\mathbb{N}$ with $n\geq n_{0}$. By the continuity of measures, $(\|\null_{n}X-X\|_{\alpha,r})_{n\in\mathbb{N}}$ converges in probability to zero. In particular, $\|X\|_{\alpha,r} < \infty$ a.s.
\end{proof}

For $p\geq 1$ we recall the linear space $\mathscr{C}_{r,p}^{\alpha}([0,T],\mathbb{R}^{m})$ of all $X\in\mathscr{C}_{r}^{\alpha}([0,T],\mathbb{R}^{m})$ for which $E[\|X\|_{\alpha,r}^{p}]$ is finite, endowed with the seminorm~\eqref{General SDE Seminorm}. We say that a sequence $(\null_{n}X)_{n\in\mathbb{N}}$ in this space is \emph{$p$-fold uniformly integrable} if $(\|\null_{n}X\|_{\alpha,r})_{n\in\mathbb{N}}$ satisfies this property in the usual sense.

\begin{lemma}\label{Hoelder Completeness Lemma 2}
Any Cauchy sequence $(\null_{n}X)_{n\in\mathbb{N}}$ in $\mathscr{C}_{r,p}^{\alpha}([0,T],\mathbb{R}^{m})$ is $p$-fold uniformly integrable.
\end{lemma}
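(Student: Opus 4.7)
Set $Y_n := \|\null_n X\|_{\alpha,r}$ for $n \in \mathbb{N}$; these are nonnegative random variables with $E[Y_n^p] < \infty$. The plan is to reduce the claim to the following classical fact: a sequence in $L^p(\Omega)$ converges in $L^p$ if and only if it converges in probability and its $p$-th powers form a uniformly integrable family. In particular, convergence in $L^p$ forces $p$-fold uniform integrability of the absolute values.

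The key observation is that $\|\cdot\|_{\alpha,r}$, as defined in~\eqref{Delayed Hoelder Norm}, is a genuine seminorm on the space of paths for which it is finite, being the sum of the two seminorms $x \mapsto \|x^r\|_\infty$ and $x \mapsto \sup_{s \ne t} |x(s)-x(t)|/|s-t|^\alpha$. The reverse triangle inequality therefore yields, pathwise,
$$
|Y_n - Y_k| \;=\; \bigl|\|\null_n X\|_{\alpha,r} - \|\null_k X\|_{\alpha,r}\bigr| \;\leq\; \|\null_n X - \null_k X\|_{\alpha,r} \quad \text{a.s.},
$$
so that taking $p$-th powers and expectations gives $\|Y_n - Y_k\|_{L^p(\Omega)} \leq (E[\|\null_n X - \null_k X\|_{\alpha,r}^p])^{1/p}$. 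The Cauchy hypothesis on $(\null_n X)_{n \in \mathbb{N}}$ in $\mathscr{C}_{r,p}^{\alpha}([0,T],\mathbb{R}^m)$ therefore transfers to $(Y_n)_{n \in \mathbb{N}}$ in $L^p(\Omega, \mathscr{F}, P)$, and by completeness of $L^p(\Omega)$ there exists $Y \in L^p(\Omega)$ with $Y_n \to Y$ in $L^p$.

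Applying the classical fact recalled above to this $L^p$-convergent sequence of nonnegative random variables, the family $(Y_n^p)_{n \in \mathbb{N}}$ is uniformly integrable, which by definition is exactly the statement that $(\null_n X)_{n \in \mathbb{N}}$ is $p$-fold uniformly integrable. No step of this outline presents a genuine obstacle; the only verification worth highlighting is that the reverse triangle inequality for $\|\cdot\|_{\alpha,r}$ holds on a full-measure event, which is immediate since $\null_n X$ and $\null_k X$ lie in $C_r^{\alpha}([0,T],\mathbb{R}^m)$ almost surely.
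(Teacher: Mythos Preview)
Your proof is correct. Both arguments rest on the same triangle-inequality observation, but they package it differently. The paper works directly: for $n\geq n_0$ it bounds $(E[\mathbbm{1}_A\|\null_n X\|_{\alpha,r}^p])^{1/p}$ by $(E[\mathbbm{1}_A Y^p])^{1/p}+\varepsilon^{1/p}/2$ with $Y:=\max_{n\leq n_0}\|\null_n X\|_{\alpha,r}$, then reads off both $L^p$-boundedness and the $\varepsilon$--$\delta$ uniform-integrability condition from this single estimate. You instead apply the reverse triangle inequality pathwise to push the Cauchy property down to the scalar sequence $Y_n=\|\null_n X\|_{\alpha,r}$ in $L^p(\Omega)$, invoke completeness of $L^p$, and then cite the Vitali-type characterization of $L^p$-convergence to extract uniform integrability of $(Y_n^p)$. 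Your route is slightly more conceptual---it cleanly separates the seminorm step from the real-variable step and delegates the latter to a standard fact---while the paper's is fully self-contained and avoids naming Vitali's theorem. Neither approach has any real advantage in generality; they are essentially two phrasings of the same idea.
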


\begin{proof}
Let $\varepsilon > 0$, then there is $n_{0}\in\mathbb{N}$ such that $E[\|\null_{n}X-\null_{k}X\|_{\alpha,r}^{p}] < \varepsilon/2^{p}$ for all $k,n\in\mathbb{N}$ with $k\wedge n\geq n_{0}$. As the random variable $Y:=\max_{n\in\{1,\dots,n_{0}\}}\|\null_{n}X\|_{\alpha,r}$ is $p$-fold integrable, we obtain that
\[
\sup_{n\in\mathbb{N}}\big(E\big[\mathbbm{1}_{A}\|\null_{n}X\|_{\alpha,r}^{p}\big]\big)^{1/p}\leq (E[\mathbbm{1}_{A}Y^{p}])^{1/p} + \varepsilon^{1/p}/2
\]
for each $A\in\mathscr{F}$. First, by choosing $A=\Omega$, this gives $\sup_{n\in\mathbb{N}} E[\|\null_{n}X\|_{\alpha,r}^{p}] < \infty$. Secondly, by setting $\delta:=\varepsilon/2^{p}$, it follows that $\sup_{n\in\mathbb{N}} E[\mathbbm{1}_{A}\|\null_{n}X\|_{\alpha,r}^{p}] < \varepsilon$ for every $A\in\mathscr{F}$ with $E[\mathbbm{1}_{A}Y^{p}] < \delta$.
\end{proof}

We conclude with the following convergence characterization.

\begin{proposition}\label{Hoelder Completeness Proposition 1}
A sequence $(\null_{n}X)_{n\in\mathbb{N}}$ in $\mathscr{C}_{r,p}^{\alpha}([0,T],\mathbb{R}^{m})$ converges with respect to the seminorm~\eqref{General SDE Seminorm} if and only if it is $p$-fold uniformly integrable and there is $X\in\mathscr{C}_{r}^{\alpha}([0,T],\mathbb{R}^{m})$ such that
\[
\lim_{n\uparrow\infty} P(\|\null_{n}X - X\|_{\alpha,r}\geq\varepsilon) =0\quad\text{for all $\varepsilon > 0$.}
\]
In the latter case, $E[\|X\|_{\alpha,r}^{p}] < \infty$ and $\lim_{n\uparrow\infty}E[\|\null_{n}X-X\|_{\alpha,r}^{p}]=0$. Moreover, $\mathscr{C}_{r,p}^{\alpha}([0,T],\mathbb{R}^{m})$ equipped with~\eqref{General SDE Seminorm} is complete.
\end{proposition}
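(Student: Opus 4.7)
The statement is a Vitali-type convergence theorem combined with completeness, so the natural plan is to (a) prove the equivalence for convergent sequences, then (b) bootstrap via the completeness result of Lemma~\ref{Hoelder Completeness Lemma 1} and the uniform integrability result of Lemma~\ref{Hoelder Completeness Lemma 2} to deduce completeness of the seminormed space. I would begin by fixing the obvious direction: if $(\null_{n}X)_{n\in\mathbb{N}}$ converges to some $X$ in the seminorm~\eqref{General SDE Seminorm}, then Markov's inequality yields $P(\|\null_{n}X-X\|_{\alpha,r}\geq\varepsilon)\leq \varepsilon^{-p}E[\|\null_{n}X-X\|_{\alpha,r}^{p}]\to 0$, and since the sequence is in particular Cauchy, Lemma~\ref{Hoelder Completeness Lemma 2} gives $p$-fold uniform integrability.

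For the non-trivial direction, assume $p$-fold uniform integrability together with convergence in probability to some $X\in\mathscr{C}_{r}^{\alpha}([0,T],\mathbb{R}^{m})$. The first task is to show $E[\|X\|_{\alpha,r}^{p}]<\infty$. I would extract a subsequence $(\null_{n_{k}}X)_{k\in\mathbb{N}}$ such that $(\|\null_{n_{k}}X-X\|_{\alpha,r})_{k\in\mathbb{N}}$ converges to zero almost surely; then by the reverse triangle inequality for the delayed H\"older norm, $\|X\|_{\alpha,r}\leq\liminf_{k\uparrow\infty}\|\null_{n_{k}}X\|_{\alpha,r}$ almost surely, so Fatou's lemma combined with the uniform $L^{p}$-bound provided by uniform integrability yields
\[
E[\|X\|_{\alpha,r}^{p}]\leq \liminf_{k\uparrow\infty}E[\|\null_{n_{k}}X\|_{\alpha,r}^{p}]\leq \sup_{n\in\mathbb{N}}E[\|\null_{n}X\|_{\alpha,r}^{p}]<\infty,
\]
whence $X\in\mathscr{C}_{r,p}^{\alpha}([0,T],\mathbb{R}^{m})$. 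Next, to upgrade convergence in probability to convergence in $L^{p}$, I would apply Vitali's convergence theorem to the scalar sequence $(\|\null_{n}X-X\|_{\alpha,r}^{p})_{n\in\mathbb{N}}$: it converges to zero in probability, and it is dominated by the uniformly integrable family $2^{p-1}(\|\null_{n}X\|_{\alpha,r}^{p}+\|X\|_{\alpha,r}^{p})$, hence uniformly integrable; Vitali then delivers $\lim_{n\uparrow\infty}E[\|\null_{n}X-X\|_{\alpha,r}^{p}]=0$.

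Finally, for completeness of $\mathscr{C}_{r,p}^{\alpha}([0,T],\mathbb{R}^{m})$ under the seminorm~\eqref{General SDE Seminorm}, given a Cauchy sequence $(\null_{n}X)_{n\in\mathbb{N}}$, I would note that it is a fortiori Cauchy for the pseudometric~\eqref{Hoelder Probability Pseudometric}, so Lemma~\ref{Hoelder Completeness Lemma 1} produces a limit $X\in\mathscr{C}_{r}^{\alpha}([0,T],\mathbb{R}^{m})$ to which it converges in the norm $\|\cdot\|_{\alpha,r}$ in probability. Lemma~\ref{Hoelder Completeness Lemma 2} guarantees $p$-fold uniform integrability of the Cauchy sequence. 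Feeding these two facts into the direction just proved yields $X\in\mathscr{C}_{r,p}^{\alpha}([0,T],\mathbb{R}^{m})$ together with $L^{p}$-convergence $E[\|\null_{n}X-X\|_{\alpha,r}^{p}]\to 0$, completing the argument.

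The only delicate point I anticipate is verifying that $X$ actually lies in $\mathscr{C}_{r}^{\alpha}([0,T],\mathbb{R}^{m})$ rather than merely in $\mathscr{C}([0,T],\mathbb{R}^{m})$, i.e.\ that almost every sample path of the limit is $\alpha$-H\"older on $[r,T]$. This is precisely what the Fatou step above provides (since $E[\|X\|_{\alpha,r}^{p}]<\infty$ forces $\|X\|_{\alpha,r}<\infty$ almost surely), but one must also check that the version $X$ produced by Lemma~\ref{Hoelder Completeness Lemma 1} is the correct one — this is immediate because uniform convergence in probability already pins down the finite-dimensional distributions, and the $\alpha$-H\"older control is inherited through the almost sure subsequential limit.
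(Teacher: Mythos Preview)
Your proposal is correct and follows essentially the same route as the paper: both reduce to the ``if'' direction via Lemmas~\ref{Hoelder Completeness Lemma 1} and~\ref{Hoelder Completeness Lemma 2}, establish $E[\|X\|_{\alpha,r}^{p}]<\infty$ by Fatou along an a.s.\ convergent subsequence, and then upgrade convergence in probability to $L^{p}$-convergence via a Vitali-type argument. The only cosmetic difference is that you invoke Vitali's theorem by name, whereas the paper writes out the splitting estimate on $\{\|\null_{n}X-X\|_{\alpha,r}\geq\varepsilon/3\}$ explicitly.
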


\begin{proof}
By Lemmas~\ref{Hoelder Completeness Lemma 1} and~\ref{Hoelder Completeness Lemma 2}, it suffices to show the if-direction of the first claim. We let $(\nu_{n})_{n\in\mathbb{N}}$ be a strictly increasing sequence in $\mathbb{N}$ such that $(\|\null_{\nu_{n}}X-X\|_{\alpha,r})_{n\in\mathbb{N}}$ converges a.s.~to zero, then
\[
E\big[\|X\|_{\alpha,r}^{p}\big]\leq \liminf_{n\uparrow\infty}E\big[\|\null_{\nu_{n}}X\|_{\alpha,r}^{p}\big]\leq \sup_{n\in\mathbb{N}} E\big[\|\null_{n}X\|_{\alpha,r}^{p}\big] <\infty,
\]
by Fatou's Lemma. For $\varepsilon > 0$ there are $\delta > 0$ and $n_{0}\in\mathbb{N}$ such that $E[\mathbbm{1}_{A}\|\null_{n}X\|_{\alpha,r}^{p}]$ $< (\varepsilon/3)^{p}$ and $P(\|\null_{n}X - X\|_{\alpha,r}\geq\varepsilon/3)$ $< \delta$ for every $A\in\mathscr{F}$ and $n\in\mathbb{N}$ with $P(A) < \delta$ and $n\geq n_{0}$. Thus,
\[
\big(E\big[\|\null_{n}X - X\|_{\alpha,r}^{p}\big]\big)^{1/p}\leq \big(E\big[\mathbbm{1}_{\{\|\null_{n}X - X\|_{\alpha,r}\geq\varepsilon/3\}}\|\null_{n}X - X\|_{\alpha,r}^{p}\big]\big)^{1/p} + \varepsilon/3 < \varepsilon
\]
for any such $n\in\mathbb{N}$, because similar reasoning as before gives us that $E[\mathbbm{1}_{A}\|X\|_{\alpha,r}^{p}]$ $\leq \sup_{n\in\mathbb{N}}E[\mathbbm{1}_{A}\|\null_{n}X\|_{\alpha,r}^{p}]$ $ < \infty$ for all $A\in\mathscr{F}$. This completes the proof.
\end{proof}

\subsection{A general Kolmogorov-Chentsov estimate}\label{A general Kolmogorov-Chentsov estimate}

We revisit the proof of the Kolmogorov-Chentsov Theorem (see e.g.~Theorem 2.1 in~\cite{RevuzYor}) to obtain a quantitative estimate of the \holder norm. Let
\begin{equation}\label{Kolmogorov-Chentsov Constant}
k_{\alpha,p,q}:=2^{p + q}(2^{q/p -\alpha}-1)^{-p}
\end{equation}
for $p\geq 1$, $q > 0$ and $\alpha\in [0,q/p)$ and note that the function $[0,q/p)\rightarrow (2^{p},\infty)$, $\alpha\mapsto k_{\alpha,p,q}$ is strictly increasing with $\lim_{\alpha\uparrow q/p} k_{\alpha,p,q}=\infty$. Then the following result holds, in which the process in question and not necessarily a modification appears.

\begin{proposition}\label{Kolmogorov-Chentsov Proposition}
Assume that $X$ is an $\mathbb{R}^{m}$-valued right-continuous process for which there are $c_{0}\geq 0$, $p\geq 1$ and $q > 0$ such that
\begin{equation}\label{Kolmogorov-Chentsov Condition}
E[|X_{s} - X_{t}|^{p}] \leq c_{0}|s-t|^{1+q}
\end{equation}
for all $s,t\in [r,T]$. Then for each $\alpha\in [0,q/p)$ it holds that
\[
E\bigg[\sup_{s,t\in [r,T]:\,s\neq t}\frac{|X_{s} - X_{t}|^{p}}{|s-t|^{\alpha p}}\bigg] \leq  k_{\alpha,p,q} c_{0}(T - r)^{1 + q - \alpha p}.
\]
In particular if $q\leq p$, then the sample paths of $X$ are a.s.~$\alpha$-\holder continuous  on $[r,T]$ for all $\alpha\in [0,q/p)$.
\end{proposition}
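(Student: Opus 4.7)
The plan is to run the standard dyadic chaining argument underlying the Kolmogorov--Chentsov theorem, but to carry all constants through explicitly so as to recover the factor $k_{\alpha,p,q}$. First I would set up dyadic meshes on $[r,T]$: for $n\in\mathbb{N}_{0}$ let $D_{n}:=\{r + k(T-r)2^{-n}:k=0,\dots,2^{n}\}$ and define the random variable
\[
K_{n} := \max_{k\in\{1,\dots,2^{n}\}} |X_{r + k(T-r)2^{-n}} - X_{r + (k-1)(T-r)2^{-n}}|.
\]
Using the union bound in the form $K_{n}^{p}\leq \sum_{k=1}^{2^{n}}|X_{t_{k}^{n}}-X_{t_{k-1}^{n}}|^{p}$ and applying the moment hypothesis~\eqref{Kolmogorov-Chentsov Condition}, I obtain
\[
E[K_{n}^{p}]\leq 2^{n}\,c_{0}\bigl((T-r)2^{-n}\bigr)^{1+q} = c_{0}(T-r)^{1+q}\,2^{-nq},
\]
so $\|K_{n}\|_{L^{p}}\leq c_{0}^{1/p}(T-r)^{(1+q)/p}\,2^{-nq/p}$.

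Next I would carry out the chaining: for any two distinct dyadic points $s,t\in D:=\bigcup_{n\geq 0}D_{n}$, let $n_{0}=n_{0}(s,t)\in\mathbb{N}$ be the unique integer with $(T-r)2^{-n_{0}}< |s-t|\leq (T-r)2^{-n_{0}+1}$. A greedy decomposition along the grids $D_{n_{0}},D_{n_{0}+1},\dots$ connects $s$ and $t$ by at most two edges at each level $n\geq n_{0}$, giving the pathwise bound
\[
|X_{s}-X_{t}|\leq 2\sum_{n\geq n_{0}} K_{n}.
\]
Dividing by $|s-t|^{\alpha}$ and using $|s-t|^{-\alpha}\leq 2^{\alpha}(T-r)^{-\alpha}2^{n_{0}\alpha}$ together with $2^{n_{0}\alpha}\leq 2^{n\alpha}$ for $n\geq n_{0}$, I estimate uniformly in $s,t\in D$:
\[
\frac{|X_{s}-X_{t}|}{|s-t|^{\alpha}}\leq 2^{1+\alpha}(T-r)^{-\alpha} \sum_{n\geq 1} 2^{n\alpha}K_{n}.
\]
Minkowski's inequality in $L^{p}$ together with the geometric bound $\sum_{n\geq 1}2^{n\alpha}\cdot 2^{-nq/p}= (2^{q/p-\alpha}-1)^{-1}$ (which converges precisely because $\alpha<q/p$) yields
\[
\Bigl\|\sum_{n\geq 1} 2^{n\alpha}K_{n}\Bigr\|_{L^{p}}\leq \frac{c_{0}^{1/p}(T-r)^{(1+q)/p}}{2^{q/p-\alpha}-1}.
\]
Raising to the $p$-th power and absorbing the prefactor $2^{(1+\alpha)p}\leq 2^{p+q}$ (since $\alpha p<q$) into $k_{\alpha,p,q}=2^{p+q}(2^{q/p-\alpha}-1)^{-p}$, I obtain the desired inequality on the dyadic supremum.

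Finally, to pass from the supremum over $s,t\in D$ to the supremum over all $s,t\in [r,T]$, I would invoke right-continuity of $X$: for any $s,t\in [r,T]$ with $s\neq t$, approximate by sequences of dyadic points $s_{k}\downarrow s$ and $t_{k}\downarrow t$ in $D$ with $s_{k}\neq t_{k}$ eventually. Then $|X_{s_{k}}-X_{t_{k}}|/|s_{k}-t_{k}|^{\alpha}\to |X_{s}-X_{t}|/|s-t|^{\alpha}$, so the two suprema coincide pointwise on $\Omega$, and the bound extends. The $\alpha$-H\"older continuity on $[r,T]$ in the case $q\leq p$ (i.e.\ $\alpha<q/p\leq 1$) is then immediate once the seminorm is a.s.\ finite. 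The main technical nuisance is simply bookkeeping of the constant $k_{\alpha,p,q}$ through the chaining and the Minkowski step; no hard probabilistic input beyond~\eqref{Kolmogorov-Chentsov Condition} and right-continuity is required.
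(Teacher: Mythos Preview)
Your proposal is correct and follows essentially the same approach as the paper's own proof: both set up dyadic grids, bound $E[K_n^p]$ (the paper's $Y_n$) by $c_0(T-r)^{1+q}2^{-nq}$ via the union bound, run the standard chaining to obtain a pointwise estimate of the form $|X_s-X_t|\leq 2\sum_{k\geq n}K_k$, convert this into a bound on the H\"older quotient, apply Minkowski's inequality together with the geometric series, and then pass from the dyadic supremum to the full supremum by right-continuity. The only differences are cosmetic: your $n_0$ is shifted by one relative to the paper's index $n$ (so your sum starts at $n\geq 1$ rather than $k\geq 0$), and you absorb the prefactor via $2^{(1+\alpha)p}\leq 2^{p+q}$ whereas the paper obtains $2^{p+q}$ exactly after simplifying $(1-2^{\alpha-q/p})^{-p}$; both routes land on the same constant $k_{\alpha,p,q}$.
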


\begin{proof}
For given $n\in\mathbb{N}_{0}$ let $\mathbb{D}_{n}$ be the $n$-th dyadic partition of $[r,T]$ whose points are $d_{i,n}:= r + i2^{-n}(T-r)$, where $i\in\{0,\dots,2^{n}\}$. We define
\[
\Delta_{n}:=\{(s,t)\in\mathbb{D}_{n}\times\mathbb{D}_{n}\,|\,|s-t| \leq 2^{-n}(T-r)\},
\]
then it is readily seen that there are $2^{n}$ tuples $(s,t)\in\Delta_{n}$ satisfying $s < t$. For $Y_{n}:=\sup_{(s,t)\in\Delta_{n}} |X_{s} - X_{t}|$ condition \eqref{Kolmogorov-Chentsov Condition} gives
\begin{equation}\label{Hoelder Moment Condition I}
E[Y_{n}^{p}]\leq\sum_{(s,t)\in\Delta_{n}:\, s < t}E[|X_{s} - X_{t}|^{p}]\leq 2^{-nq}c_{0}(T-r)^{1+q}.
\end{equation}

We set $\mathbb{D}:=\bigcup_{n\in\mathbb{N}_{0}}\mathbb{D}_{n}$ and let $s,t\in\mathbb{D}$ satisfy $0 < t - s$ $< 2^{-n}(T-r)$ for some $n\in\mathbb{N}_{0}$. Then for each $k\in\mathbb{N}_{0}$ there are unique $i_{k},j_{k}\in\{1,\dots,2^{k}\}$ such that $d_{i_{k} - 1,k} \leq s < d_{i_{k},k}$, and
\[
d_{j_{k}-1,k}\leq t < d_{j_{k},k},\quad\text{if $t < T$,}\quad \text{and}\quad d_{j_{k},k} = T,\quad\text{otherwise.}
\]
As $(d_{i_{k},k})_{k\in\mathbb{N}_{0}}$ and $(d_{j_{k},k})_{k\in\mathbb{N}_{0}}$ are two decreasing sequences converging to $s$ and $t$, respectively, two telescoping sums yield that
\begin{align*}
X_{s} - X_{t} &=  X_{d_{i_{n},n}} - X_{d_{j_{n},n}} + \sum_{k=n}^{\infty} \big( X_{d_{i_{k+1},k+1}} - X_{d_{i_{k},k}}\big) + \sum_{k=n}^{\infty}\big(X_{d_{j_{k+1},k+1}} - X_{d_{j_{k},k}}\big).
\end{align*}
We notice that either $i_{n} = j_{n}$ or instead $n\geq 1$, $j_{n}\geq 2$ and $i_{n} = j_{n}-1$, since $0 < t-s$ $< 2^{-n}(T-r)$. In both cases, we have $(d_{i_{n},n},d_{j_{n},n})\in\Delta_{n}$. Moreover, $(d_{i_{k},k},d_{i_{k+1},k+1}), (d_{j_{k},k},d_{j_{k+1},k+1})\in\Delta_{k+1}$ for all $k\in\mathbb{N}_{0}$, by construction. So,
\[
|X_{s} - X_{t}| \leq 2\sum_{k=n}^{\infty} Y_{k}.
\]
Clearly, for each $s,t\in\mathbb{D}$ with $0 < t-s < T-r$ there is a unique $n\in\mathbb{N}_{0}$ satisfying $2^{-n-1}(T-r)\leq t-s < 2^{-n}(T-r)$. This entails that
\begin{equation}\label{Hoelder Moment Condition II}
\sup_{s,t\in [r,T]:\, s\neq t} \frac{|X_{s} - X_{t}|}{|s-t|^{\alpha}}\leq 2^{1+\alpha}(T-r)^{-\alpha}\sum_{k=0}^{\infty}2^{\alpha k}Y_{k},
\end{equation}
as $\mathbb{D}$ is a countable dense set in $[r,T]$ containing $T$ and $X$ is right-continuous. Hence,~\eqref{Hoelder Moment Condition II}, the triangle inequality, monotone convergence and~\eqref{Hoelder Moment Condition I} yield that
\[
\bigg(E\bigg[\sup_{s,t\in [r,T]:\,s\neq t}\frac{|X_{s}- X_{t}|^{p}}{|s-t|^{\alpha p}}\bigg]\bigg)^{1/p}
\leq 2^{1+\alpha} c_{0}^{1/p}(T-r)^{(1+q)/p - \alpha}\sum_{k=0}^{\infty}2^{(\alpha - q/p)k}.
\]
Since the power series on the right-hand side converges absolutely to the inverse of $1 - 2^{\alpha - q/p}$, the proposition follows.
\end{proof}

\subsection{Convergence along a sequence of partitions}\label{Convergence along a sequence of partitions}

We state a sufficient criterion for a sequence of processes to converge in the norm $\|\cdot\|_{\alpha,r}$ in probability, where $\alpha\in [0,1]$. For this purpose, condition~\eqref{Partition Condition} on the sequence of partitions is crucial.

\begin{lemma}\label{Hoelder Convergence Lemma}
Let $(\null_{n}X)_{n\in\mathbb{N}}$ be a sequence of $\mathbb{R}^{m}$-valued right-continuous processes for which there are $p,q > 0$ with $q \leq p$ such that for each $\beta\in (0,q/p)$ there is $c_{\beta}\geq 0$ satisfying
\begin{equation}\label{Hoelder Convergence Condition}
P\bigg(\max_{j\in\{0,\dots,k_{n}-1\}} \sup_{s,t\in [t_{j,n},t_{j+1,n}]:\,s\neq t} \frac{|\null_{n}X_{s} - \null_{n}X_{t}|}{|s- t|^{\beta}} \geq \lambda\bigg) \leq c_{\beta} \lambda^{-p}
\end{equation}
for every $n\in\mathbb{N}$ and $\lambda > 0$. If $(\|\null_{n}X^{r}\|_{\infty})_{n\in\mathbb{N}}$ and $(\max_{j\in\{1\dots,k_{n}\}}|\null_{n}X_{t_{j,n}}|/|\mathbb{T}_{n}|^{\alpha})_{n\in\mathbb{N}}$ converge in probability to zero, then so does $(\|\null_{n}X\|_{\alpha,r})_{n\in\mathbb{N}}$ for any $\alpha\in [0,q/p)$.
\end{lemma}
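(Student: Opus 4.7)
The plan is to reduce the claim to three convergences in probability by using the delayed Hölder seminorm's decomposition
\[
\|\null_n X\|_{\alpha,r} = \|\null_n X^{r}\|_{\infty} + \sup_{s,t\in [r,T]:\,s\neq t}\frac{|\null_n X_{s} - \null_n X_{t}|}{|s-t|^{\alpha}}.
\]
The first summand vanishes in probability by hypothesis, so the work is to control the supremum on the right. The idea is to pick some $\beta\in(\alpha,q/p)$: by \eqref{Hoelder Convergence Condition}, the maximum local $\beta$-Hölder modulus
\[
M_{n}^{\beta} := \max_{j\in\{0,\dots,k_{n}-1\}} \sup_{s,t\in [t_{j,n},t_{j+1,n}]:\,s\neq t} \frac{|\null_n X_{s} - \null_n X_{t}|}{|s- t|^{\beta}}
\]
is tight (bounded in probability) uniformly in $n$, since $c_{\beta}$ does not depend on $n$. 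Set $\delta_n := |\mathbb{T}_n|$, which tends to $0$.

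For $r\leq s<t\leq T$, locate indices $j\leq k$ with $s\in[t_{j,n},t_{j+1,n}]$ and $t\in[t_{k,n},t_{k+1,n}]$ and split into two geometric cases. If $k\in\{j,j+1\}$, decomposing (trivially or through $t_{j+1,n}$) and using the local $\beta$-modulus gives $|\null_n X_{s}-\null_n X_{t}|\leq 2M_{n}^{\beta}|s-t|^{\beta}$, so
\[
\frac{|\null_n X_{s}-\null_n X_{t}|}{|s-t|^{\alpha}}\leq 2M_{n}^{\beta}(2\delta_{n})^{\beta-\alpha}\xrightarrow{P}0.
\]
If instead $k\geq j+2$, the balanced-partition condition \eqref{Partition Condition} yields $|s-t|\geq t_{k,n}-t_{j+1,n}\geq \delta_{n}/c_{\mathbb{T}}$, and the three-term decomposition
\[
\null_n X_{s}-\null_n X_{t} = (\null_n X_{s} - \null_n X_{t_{j+1,n}})+(\null_n X_{t_{j+1,n}}-\null_n X_{t_{k,n}})+(\null_n X_{t_{k,n}} - \null_n X_{t})
\]
bounds the outer two terms by $M_{n}^{\beta}\delta_{n}^{\beta}$ and the middle term by $2\max_{l\in\{1,\dots,k_n\}}|\null_n X_{t_{l,n}}|$. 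Dividing by $|s-t|^{\alpha}\geq(\delta_{n}/c_{\mathbb{T}})^{\alpha}$ gives a contribution bounded above by $2M_{n}^{\beta}c_{\mathbb{T}}^{\alpha}\delta_{n}^{\beta-\alpha}$ plus $2c_{\mathbb{T}}^{\alpha}\max_{l}|\null_n X_{t_{l,n}}|/\delta_{n}^{\alpha}$, both of which vanish in probability by the hypotheses.

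Combining these two cases, the supremum seminorm converges to zero in probability uniformly, and adding the uniform-convergence of $\|\null_n X^{r}\|_\infty$ yields $\|\null_n X\|_{\alpha,r}\xrightarrow{P} 0$. There is no substantive obstacle: the only subtlety is ensuring that the balanced-mesh assumption \eqref{Partition Condition} is used precisely when the partition indices of $s$ and $t$ are at least two apart, so that the gap $|s-t|$ dominates $\delta_n$ and both the telescoping Hölder remainder and the partition-point estimate can be simultaneously absorbed.
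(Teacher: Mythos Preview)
Your proof is correct and follows essentially the same approach as the paper: fix $\beta\in(\alpha,q/p)$, control the local piece via the tightness of $M_n^\beta$ together with the extra factor $|\mathbb{T}_n|^{\beta-\alpha}$, and control the partition-point piece via the balanced-mesh lower bound $|s-t|\geq|\mathbb{T}_n|/c_{\mathbb{T}}$ and the hypothesis on $\max_j|\null_nX_{t_{j,n}}|/|\mathbb{T}_n|^{\alpha}$. The paper packages the case analysis into a single inequality bounding the global $\alpha$-seminorm by twice the maximal local $\alpha$-seminorm plus the maximal partition-point $\alpha$-quotient, whereas you split by index distance, but the content is the same.
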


\begin{proof}
Let $\beta\in (\alpha,q/p)$ and $n\in\mathbb{N}$. First, a case distinction shows that
\begin{align*}
\sup_{s,t\in [r,T]:\,s\neq t}\frac{|\null_{n}X_{s} - \null_{n}X_{t}|}{|s-t|^{\alpha}} &\leq 2\max_{j\in\{0,\dots,k_{n}-1\}}\sup_{s,t\in [t_{j,n},t_{j+1,n}]:\,s\neq t}\frac{|\null_{n}X_{s} - \null_{n}X_{t}|}{|s-t|^{\alpha}}\\
&\quad + \max_{i,j\in\{1,\dots,k_{n}\}:\,i\neq j}\frac{|\null_{n}X_{t_{i,n}} - \null_{n}X_{t_{j,n}}|}{|t_{i,n} - t_{j,n}|^{\alpha}}.
\end{align*}
By using the facts that $|s-t|^{\beta -\alpha}\leq |\mathbb{T}_{n}|^{\beta -\alpha}$ and $|t_{i,n}- t_{j,n}|\geq |\mathbb{T}_{n}|/c_{\mathbb{T}}$ for all $i,j\in\{0,\dots,k_{n}\}$ with $i\neq j$ and $s,t\in [t_{j,n},t_{j+1,n}]$, we see that
\begin{align*}
P\bigg(\sup_{s,t\in [r,T]:\,s\neq t} \frac{|\null_{n}X_{s} - \null_{n}X_{t}|}{|s-t|^{\alpha}}\geq \varepsilon\bigg) &\leq c_{\beta}(4/\varepsilon)^{p}|\mathbb{T}_{n}|^{(\beta - \alpha)p}\\
&\quad + P\bigg(\max_{j\in\{1,\dots,k_{n}\}}|\null_{n}X_{t_{j,n}}|/|\mathbb{T}_{n}|^{\alpha} > (\varepsilon/4)c_{\mathbb{T}}^{-\alpha}\bigg)
\end{align*}
for any $\varepsilon > 0$. As the terms on the right-hand side converge to zero as $n\uparrow\infty$, the assertion is shown.
\end{proof}

\begin{remark}\label{Hoelder Convergence Remark}
Let $p\geq 1$ and $c_{0}\geq 0$ be such that $E[|\null_{n}X_{s}-\null_{n}X_{t}|^{p}]$ $\leq c_{0}|s-t|^{1+q}$ for all $n\in\mathbb{N}$, $j\in\{0,\dots,k_{n}-1\}$ and $s,t\in [t_{j,n},t_{j+1,n}]$. Then Chebyshev's inequality in combination with Proposition~\ref{Kolmogorov-Chentsov Proposition} ensure that condition~\eqref{Hoelder Convergence Condition} is satisfied.
\end{remark}

\subsection{Adapted linear interpolations of Brownian motion}\label{Adapted linear interpolations of Brownian motion}

We study the sequence $(\null_{n}W)_{n\in\mathbb{N}}$ of adapted linear interpolations of $W$ that are given by~\eqref{Adapted Linear Interpolation of Brownian Motion Definition} and whose paths lie in $H_{r}^{1}([0,T],\mathbb{R}^d)$. To this end, we introduce the following notation. For given $n\in\mathbb{N}$ and $t\in [r,T)$, let $i\in\{0,\dots,k_{n}-1\}$ be such that $t\in [t_{i,n},t_{i+1,n})$, then we set
\begin{equation*}
\underline{t}_{n}:=t_{(i-1)\vee 0,n},\quad t_{n}:=t_{i,n}\quad\text{and}\quad \overline{t}_{n}:=t_{i+1,n}.
\end{equation*}
That is, $\underline{t}_{n}$ is the predecessor of $t_{n}$ with respect to $\mathbb{T}_{n}$, unless $i=0$, and $\overline{t}_{n}$ is the successor of $t_{n}$. We also set $\underline{T}_{n}:=t_{k_{n-1,n}}$, $T_{n}:=T$ and $\overline{T}_{n}:=T$. In addition, we use the following abbreviations:
\begin{equation*}
\Delta t_{i,n}:= t_{i,n} - t_{(i-1)\vee 0,n}\quad\text{and}\quad \Delta W_{t_{i,n}}:= W_{t_{i,n}} - W_{t_{(i-1)\vee 0,n}}
\end{equation*}
for each $i\in\{0,\dots,k_{n}\}$. After these preparations, let us begin with a general integral representation.

\begin{lemma}\label{Brownian Linear Interpolation Property Lemma}
Let $n\in\mathbb{N}$ and $s,t\in [r,T]$ be such that $s < t$. Then each $\mathbb{R}^{m\times d}$-valued progressively measurable process $X$ satisfies
\[
\int_{s}^{t}X_{\underline{u}_{n}}\,d\null_{n}W_{u} = \frac{t-s}{\Delta t_{i+1,n}}\int_{t_{i-1,n}}^{t_{i,n}}X_{u_{n}}\,dW_{u}\quad\text{a.s.,}
\]
whenever $i\in\{1,\dots,k_{n}-1\}$ is such that $s,t\in [t_{i,n},t_{i+1,n}]$, and
\begin{align*}
\int_{s}^{t}X_{\underline{u}_{n}}\,d\null_{n}W_{u} &= \frac{t_{i+1,n} - s}{\Delta t_{i+1,n}}\int_{t_{i-1,n}}^{t_{i,n}}X_{u_{n}}\,dW_{u} + \int_{t_{i,n}}^{t_{j-1,n}}X_{u_{n}}\,dW_{u}\\
&\quad + \frac{t-t_{j,n}}{\Delta t_{j+1,n}}\int_{t_{j-1,n}}^{t_{j,n}}X_{u_{n}}\,dW_{u}\quad\text{a.s.,}
\end{align*}
if $i,j\in\{1,\dots,k_{n}-1\}$ are such that $i< j$, $s\in [t_{i,n},t_{i+1,n}]$ and $t\in [t_{j,n},t_{j+1,n}]$.
\end{lemma}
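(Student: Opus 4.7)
The plan is to exploit the piecewise-affine structure of $\null_n W$. By equation~\eqref{Delayed Linear Interpolation Operator}, for each $i \in \{1,\dots,k_n-1\}$ and every $u \in [t_{i,n},t_{i+1,n}]$ one has
\[
\null_n\dot{W}_u = \frac{\Delta W_{t_{i,n}}}{\Delta t_{i+1,n}},
\]
a random but piecewise-constant derivative. Since the paths of $\null_n W$ are absolutely continuous, $\int_s^t X_{\underline{u}_n}\,d\null_n W_u$ is a pathwise Lebesgue integral against $\null_n\dot{W}_u\,du$, so no stochastic-integration machinery is needed on the left-hand side of the identities to be proved.

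For the first assertion (both endpoints in one partition interval), observe that on $[t_{i,n},t_{i+1,n})$ both $u_n$ and $\underline{u}_n$ are constant, equal to $t_{i,n}$ and $t_{i-1,n}$ respectively. Hence $X_{\underline{u}_n} \equiv X_{t_{i-1,n}}$ on $[s,t]$, and a direct computation yields
\[
\int_s^t X_{\underline{u}_n}\,d\null_n W_u = \frac{(t-s)\,X_{t_{i-1,n}}\Delta W_{t_{i,n}}}{\Delta t_{i+1,n}}\quad\text{a.s.}
\]
On the right-hand side of the claimed identity, for $u \in [t_{i-1,n},t_{i,n})$ one has $u_n = t_{i-1,n}$, so $X_{u_n} \equiv X_{t_{i-1,n}}$ is a simple $\mathscr{F}_{t_{i-1,n}}$-measurable integrand and the It{\^o} integral collapses to
\[
\int_{t_{i-1,n}}^{t_{i,n}} X_{u_n}\,dW_u = X_{t_{i-1,n}}(W_{t_{i,n}} - W_{t_{i-1,n}}) = X_{t_{i-1,n}}\Delta W_{t_{i,n}}.
\]
Multiplying by $(t-s)/\Delta t_{i+1,n}$ matches the two sides.

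For the second assertion, I split the integral at the intermediate grid points,
\[
\int_s^t X_{\underline{u}_n}\,d\null_n W_u = \int_s^{t_{i+1,n}} X_{\underline{u}_n}\,d\null_n W_u + \sum_{k=i+1}^{j-1}\int_{t_{k,n}}^{t_{k+1,n}} X_{\underline{u}_n}\,d\null_n W_u + \int_{t_{j,n}}^{t} X_{\underline{u}_n}\,d\null_n W_u,
\]
so that each piece lies in a single partition interval and the first assertion applies. The two boundary pieces directly produce the first and third terms claimed by the lemma. Each middle piece equals $\int_{t_{k-1,n}}^{t_{k,n}} X_{u_n}\,dW_u$, and summing over $k \in \{i+1,\dots,j-1\}$ telescopes into $\int_{t_{i,n}}^{t_{j-1,n}} X_{u_n}\,dW_u$ (with the convention that this is zero when $j = i+1$). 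The one point requiring care is the one-step index shift between the interpolation-side interval $[t_{k,n},t_{k+1,n}]$ and the corresponding Brownian-side interval $[t_{k-1,n},t_{k,n}]$; beyond that the proof is pure bookkeeping rather than any substantial analytical obstacle.
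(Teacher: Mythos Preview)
Your proof is correct and follows exactly the same approach as the paper's: establish the first identity directly from the definition of $\null_n W$ (the paper simply says this ``follows immediately''), then obtain the second identity by splitting the integral at the intermediate grid points and applying the first identity to each piece. Your write-up spells out the details the paper leaves implicit, but the structure is identical.
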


\begin{proof}
The first identity follows immediately from the definition of $\null_{n}W$. To obtain the second, we simply use the decomposition
\begin{align*}
\int_{s}^{t} X_{\underline{u}_{n}}\,d\null_{n}W_{u} &= \int_{s}^{t_{i+1,n}}X_{\underline{u}_{n}}\,d\null_{n}W_{u} + \sum_{k=i+1}^{j-1} \int_{t_{k,n}}^{t_{k+1,n}}X_{\underline{u}_{n}}\,d\null_{n}W_{u} + \int_{t_{j,n}}^{t}X_{\underline{u}_{n}}\,d\null_{n}W_{u}
\end{align*}
together with the first identity.
\end{proof}

Let us recall an explicit moment estimate for stochastic integrals driven by $W$ from~\cite{MaoSDEandApp}[Theorem 7.2]. For $p\geq 2$ we set $ w_{p}:=((p^{3}/2)/(p-1))^{p/2}$, then for any $\mathbb{R}^{m\times d}$-valued progressively measurable process $X$ with $\int_{r}^{T}E[|X_{u}|^{p}]\,du < \infty$,
\begin{equation}\label{Mao's Inequality}
E\bigg[\sup_{v\in [s,t]}\bigg|\int_{s}^{v}X_{u}\,dW_{u}\bigg|^{p}\bigg] \leq w_{p} (t-s)^{p/2-1}\int_{s}^{t}E[|X_{u}|^{p}]\,du
\end{equation}
for all $s,t\in [r,T]$ with $s\leq t$. We derive a corresponding result for the sequence $(\null_{n}W)_{n\in\mathbb{N}}$ of adapted linear interpolations of $W$.

\begin{proposition}\label{Adapted Linear Interpolation Estimation}
For each $p\geq 2$ there is $\hat{w}_{p} > 0$ such that each $\mathbb{R}^{m\times d}$-valued progressively measurable process $X$ satisfies
\begin{equation*}
E\bigg[\max_{v\in [s,t]}\bigg|\int_{s}^{v}X_{\underline{u}_{n}}\,d\null_{n}W_{u}\bigg|^{p}\bigg] \leq \hat{w}_{p}(t-s)^{p/2}\max_{j\in\{0,\dots,k_{n}\}:\,t_{j,n}\in [\underline{s}_{n},\underline{t}_{n}]} E[|X_{t_{j,n}}|^{p}] 
\end{equation*}
for each $n\in\mathbb{N}$ and $s,t\in [r,T]$ with $s\leq t$.
\end{proposition}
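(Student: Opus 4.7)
The plan is to reduce to Mao's moment inequality~\eqref{Mao's Inequality} via the two representation formulas of Lemma~\ref{Brownian Linear Interpolation Property Lemma}. Fix $s, t \in [r,T]$ with $s \leq t$, pick $i \leq j$ such that $s \in [t_{i,n}, t_{i+1,n}]$ and $t \in [t_{j,n}, t_{j+1,n}]$, and set $I_n(v) := \int_s^v X_{\underline u_n}\,d\null_n W_u$ for $v \in [s,t]$. On each partition cell $[t_{k,n}, t_{k+1,n}]$ with $k \geq 1$ the integrand $X_{\underline u_n}$ equals the $\mathscr{F}_{t_{k-1,n}}$-measurable constant $X_{t_{k-1,n}}$ in $u$ and $d\null_n W_u/du = \Delta W_{t_{k,n}}/\Delta t_{k+1,n}$, while on $[r, t_{1,n}]$ the process $\null_n W$ is constant and so $d\null_n W_u = 0$. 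Hence $v \mapsto I_n(v)$ is continuous and piecewise affine, and its absolute value attains its maximum on the finite set $\{s,t\} \cup (\mathbb{T}_n \cap (s,t))$.

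The key step is the explicit evaluation at partition vertices. If $i = j$, the first identity of Lemma~\ref{Brownian Linear Interpolation Property Lemma} gives $I_n(v) = \tfrac{v-s}{\Delta t_{i+1,n}} X_{t_{i-1,n}} \Delta W_{t_{i,n}}$. If $i < j$, combining both identities and telescoping the resulting Itô integrals yields, for every partition point $t_{k,n} \in (t_{i+1,n}, t_{j,n}]$,
\[
I_n(t_{k,n}) \,=\, \frac{t_{i+1,n} - s}{\Delta t_{i+1,n}} X_{t_{i-1,n}} \Delta W_{t_{i,n}} \,+\, \int_{t_{i,n}}^{t_{k-1,n}} X_{u_n}\,dW_u,
\]
together with an additional term $\tfrac{t-t_{j,n}}{\Delta t_{j+1,n}} X_{t_{j-1,n}} \Delta W_{t_{j,n}}$ at $v = t$. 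The triangle inequality then produces $\max_{v\in [s,t]}|I_n(v)| \leq N_1 + N_2 + N_3$, where $N_1$ and $N_3$ are the two boundary corrections and $N_2 := \sup_{w \in [t_{i,n}, t_{j-1,n}]} |\int_{t_{i,n}}^{w} X_{u_n}\,dW_u|$.

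Each term is then estimated in $L^p$. For $N_1$ I condition on $\mathscr{F}_{t_{i-1,n}}$ to exploit that $\Delta W_{t_{i,n}}$ is Gaussian with variance $\Delta t_{i,n}$ and independent of the past; the balance inequality~\eqref{Partition Condition} yields $\Delta t_{i,n} \leq c_{\mathbb{T}}\Delta t_{i+1,n}$, and combined with $t_{i+1,n} - s \leq \Delta t_{i+1,n}$ this gives $E[N_1^p]^{1/p} \leq c_{\mathbb{T}}^{1/2} m_p^{1/p} (t-s)^{1/2} E[|X_{t_{i-1,n}}|^p]^{1/p}$, where $m_p$ denotes the $p$-th absolute moment of a standard normal; $N_3$ is handled identically. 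Mao's inequality~\eqref{Mao's Inequality} applied to the $\mathbb{T}_n$-piecewise constant integrand yields $E[N_2^p] \leq w_p(t-s)^{p/2}\max_l E[|X_{t_{l,n}}|^p]$, since the relevant interval has length at most $t - s$. All indices $l$ appearing lie in $\{(i-1)\vee 0, \dots, j-1\}$ and hence correspond to partition points in $[\underline s_n, \underline t_n]$, so Minkowski's inequality delivers the claim with $\hat w_p := (2 c_{\mathbb{T}}^{1/2} m_p^{1/p} + w_p^{1/p})^p$. The main obstacle is the careful case bookkeeping ensuring that every $t_{l,n}$ on the right-hand side belongs to $[\underline s_n, \underline t_n]$, and that the boundary cases $i = 0$ (where $N_1 = 0$ because $\null_n W$ is constant on $[r,t_{1,n}]$) and $t = T$ (where $\underline T_n$ follows a separate convention) are treated correctly; once that is arranged, the $L^p$ bounds reduce to independence, the balance condition~\eqref{Partition Condition}, and Mao's inequality.
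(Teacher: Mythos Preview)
Your approach is essentially the same as the paper's: both use Lemma~\ref{Brownian Linear Interpolation Property Lemma} to exploit the piecewise-affine structure of $v\mapsto\int_{s}^{v}X_{\underline{u}_{n}}\,d\null_{n}W_{u}$, split the maximum into the two boundary pieces and the running middle integral $\int_{t_{i,n}}^{\cdot}X_{u_{n}}\,dW_{u}$, and then control each in $L^{p}$. The only cosmetic difference is that you bound $N_{1}$ and $N_{3}$ by conditioning and a direct Gaussian-moment computation, whereas the paper applies~\eqref{Mao's Inequality} uniformly to all three pieces; this produces a different but equivalent constant.

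There is one genuine slip. Your bound on $N_{2}$ asserts that the interval $[t_{i,n},t_{j-1,n}]$ has length at most $t-s$, but this is false in general: take for instance $\Delta t_{i+1,n}$ large, $\Delta t_{j,n}$ small, $s$ close to $t_{i+1,n}$ and $t$ close to $t_{j,n}$. What \emph{is} true, via the balance condition~\eqref{Partition Condition}, is
\[
t_{j-1,n}-t_{i,n}=\sum_{l=i+1}^{j-1}\Delta t_{l,n}\leq (j-i-1)\,|\mathbb{T}_{n}|\leq c_{\mathbb{T}}\sum_{l=i+2}^{j}\Delta t_{l,n}=c_{\mathbb{T}}(t_{j,n}-t_{i+1,n})\leq c_{\mathbb{T}}(t-s),
\]
so Mao's inequality gives $E[N_{2}^{p}]\leq w_{p}c_{\mathbb{T}}^{p/2}(t-s)^{p/2}\max_{l}E[|X_{t_{l,n}}|^{p}]$. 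This is exactly how the paper arrives at its constant $\hat{w}_{p}=3^{p}w_{p}c_{\mathbb{T}}^{p/2}$. With this correction (an extra factor $c_{\mathbb{T}}^{1/2}$ on the $w_{p}^{1/p}$ term in your $\hat{w}_{p}$), your argument is complete and matches the paper's.
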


\begin{proof}
We assume that $E[|X_{t_{j,n}}|^{p}] < \infty$ for all $j\in\{0,\dots,k_{n}\}$ with $t_{j,n}\in [\underline{s}_{n},\underline{t}_{n}]$, as otherwise there is nothing to show. First, if $t\leq t_{1,n}$, then $\int_{s}^{v}X_{\underline{u}_{n}}\,d\null_{n}W_{u} = 0$ for each $v\in [s,t]$. For $s < t_{1,n}$ and $t\geq t_{1,n}$ we have
\[
\int_{s}^{v}X_{\underline{u}_{n}}\,d\null_{n}W_{u} = \int_{t_{1,n}}^{v}X_{\underline{u}_{n}}\,d\null_{n}W_{u}\quad\text{for all $v\in [t_{1,n},t]$.}
\]
Thus, let us use Lemma~\ref{Brownian Linear Interpolation Property Lemma} and assume at first that $s,t\in [t_{i,n},t_{i+1,n}]$ for some $i\in\{1,\dots,k_{n}-1\}$. Then~\eqref{Mao's Inequality} yields that
\[
E\bigg[\max_{v\in [s,t]}\bigg|\int_{s}^{v}X_{\underline{u}_{n}}\,d\null_{n}W_{u}\bigg|^{p}\bigg]\leq w_{p}c_{\mathbb{T}}^{p/2} (t-s)^{p/2} E\big[|X_{t_{i-1,n}}|^{p}\big],
\]
where $c_{\mathbb{T}}$ is the constant appearing in~\eqref{Partition Condition}. Let now $i,j\in\{1,\dots,k_{n}-1\}$ be such that $i < j$, $s\in [t_{i,n},t_{i+1,n}]$ and $t\in [t_{j,n},t_{j+1,n}]$, then
\begin{align*}
&\max_{v\in[s,t]}\bigg|\int_{s}^{v}X_{\underline{u}_{n}}\,d\null_{n}W_{u}\bigg| \leq \frac{t_{i+1,n} - s}{\Delta t_{i+1,n}}\bigg|\int_{t_{i-1,n}}^{t_{i,n}}X_{u_{n}}\,dW_{u}\bigg|\\
&\quad + \max_{k\in\{i,\dots,j-1\}}\bigg|\int_{t_{i,n}}^{t_{k,n}}X_{u_{n}}\,dW_{u}\bigg| + \frac{t - t_{j,n}}{\Delta t_{j+1,n}}\bigg|\int_{t_{j-1,n}}^{t_{j,n}}X_{u_{n}}\,dW_{u}\bigg|\quad\text{a.s.}
\end{align*}
This is due to Lemma~\ref{Brownian Linear Interpolation Property Lemma}, which asserts that the adapted process $[s,t]\times\Omega\rightarrow\mathbb{R}^{m}$, $(v,\omega)\mapsto \int_{s}^{v}X_{\underline{u}_{n}}(\omega)\,d\null_{n}W_{u}(\omega)$ is piecewise linear. Hence, from~\eqref{Mao's Inequality} we obtain that
\[
E\bigg[\max_{v\in [s,t]}\bigg|\int_{s}^{v}X_{\underline{u}_{n}}\,d\null_{n}W_{u}\bigg|^{p}\bigg] \leq  \hat{w}_{p}(t-s)^{p/2}\max_{k\in\{i-1,\dots,j-1\}} E\big[|X_{t_{k,n}}|^{p}\big] 
\]
for $\hat{w}_{p}:=3^{p}w_{p}c_{\mathbb{T}}^{p/2}$, which yields the claim.
\end{proof}

Finally, we derive an explicit integral moment estimate for $(\null_{n}W)_{n\in\mathbb{N}}$.

\begin{lemma}\label{Brownian Interpolation Integral Moments Lemma}
For each $p,q\geq 1$ there exists $\hat{w}_{p,q} > 0$ satisfying
\begin{equation}\label{Brownian Interpolation Constant}
E\bigg[\bigg(\int_{s}^{t}|_{n}\dot{W}_{u}|^{q}\,du\bigg)^{p}\bigg] \leq \hat{w}_{p,q}|\mathbb{T}_{n}|^{-pq/2} (t-s)^{p}
\end{equation}
for all $n\in\mathbb{N}$ and $s,t\in [r,T]$ with $s\leq t$.
\end{lemma}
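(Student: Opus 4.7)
My plan is to reduce the bound to a pointwise moment estimate on $\null_{n}\dot{W}_{u}$ via Jensen's inequality, and then exploit the piecewise-constant Gaussian structure of this derivative together with the balanced-partition property~\eqref{Partition Condition}.

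First, since $p\geq 1$, Jensen's inequality applied to the probability measure $(t-s)^{-1}\mathbbm{1}_{[s,t]}(u)\,du$ gives
\[
\bigg(\int_{s}^{t}|\null_{n}\dot{W}_{u}|^{q}\,du\bigg)^{p}\leq (t-s)^{p-1}\int_{s}^{t}|\null_{n}\dot{W}_{u}|^{qp}\,du,
\]
so by Tonelli it suffices to produce a constant $C=C(p,q,d,c_{\mathbb{T}})$ satisfying $E[|\null_{n}\dot{W}_{u}|^{qp}]\leq C|\mathbb{T}_{n}|^{-qp/2}$ uniformly for $u\in[r,T]$ and $n\in\mathbb{N}$; integrating the resulting bound over $[s,t]$ and multiplying by $(t-s)^{p-1}$ then yields~\eqref{Brownian Interpolation Constant} with $\hat{w}_{p,q}:=C$.

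Second, the pointwise moment is immediate from the definition~\eqref{Delayed Linear Interpolation Operator}: the derivative vanishes on $[r,t_{1,n}]$, and for $u\in[t_{i,n},t_{i+1,n})$ with $i\in\{1,\dots,k_{n}-1\}$ one has $\null_{n}\dot{W}_{u}=\Delta W_{t_{i,n}}/\Delta t_{i+1,n}$, where $\Delta W_{t_{i,n}}$ has distribution $N(0,\Delta t_{i,n}\mathbbm{I}_{d})$. Therefore
\[
E[|\null_{n}\dot{W}_{u}|^{qp}]=C_{qp,d}\,(\Delta t_{i,n})^{qp/2}(\Delta t_{i+1,n})^{-qp},
\]
where $C_{qp,d}$ denotes the $qp$-th absolute moment of a standard $d$-dimensional Gaussian vector. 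The balanced-partition hypothesis~\eqref{Partition Condition} forces $|\mathbb{T}_{n}|/c_{\mathbb{T}}\leq \Delta t_{j,n}\leq |\mathbb{T}_{n}|$ for every admissible $j$, hence $E[|\null_{n}\dot{W}_{u}|^{qp}]\leq C_{qp,d}\,c_{\mathbb{T}}^{qp}|\mathbb{T}_{n}|^{-qp/2}$, concluding the proof.

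There is no substantive obstacle. The only subtlety worth flagging is that~\eqref{Partition Condition} is precisely what decouples the numerator variance $\Delta t_{i,n}$ from the denominator power $(\Delta t_{i+1,n})^{qp}$ into a single factor $|\mathbb{T}_{n}|^{-qp/2}$; without a balanced-partition assumption one could only bound the ratio by $|\mathbb{T}_{n}|^{qp/2}(\Delta t_{i+1,n})^{-qp}$, which is not uniform in $i$. The degenerate regime on $[r,t_{1,n}]$ where the interpolation is constant is harmless and only improves the bound.
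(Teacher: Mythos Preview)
Your proof is correct and even yields exactly the same constant $\hat{w}_{p,q}=E[|Z|^{pq}]c_{\mathbb{T}}^{pq}$ as the paper. The underlying computation---Gaussian moments of $\Delta W_{t_{i,n}}/\Delta t_{i+1,n}$ combined with the balanced-partition bound $|\mathbb{T}_{n}|/c_{\mathbb{T}}\leq \Delta t_{j,n}\leq |\mathbb{T}_{n}|$---is identical in both arguments.

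The organizational difference is that the paper first treats the case $s,t\in[t_{i,n},t_{i+1,n}]$, where $\null_{n}\dot{W}$ is constant so the integral factors exactly as $|\null_{n}\dot{W}_{t_{i,n}}|^{q}(t-s)$, and then handles general $s,t$ by splitting the integral along $\mathbb{T}_{n}$ and applying the Minkowski inequality in $L^{p}(\Omega)$. You instead apply Jensen's inequality in the $du$-variable up front, reducing everything to the pointwise bound $E[|\null_{n}\dot{W}_{u}|^{qp}]\leq C_{qp,d}\,c_{\mathbb{T}}^{qp}|\mathbb{T}_{n}|^{-qp/2}$, which absorbs the multi-interval case without any decomposition. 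Your route is marginally more streamlined; the paper's route makes the piecewise-constant structure of $\null_{n}\dot{W}$ more visibly the source of the factor $(t-s)^{p}$. Neither approach has any real advantage over the other here.
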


\begin{proof}
Since $\null_{n}W$ is constant on $[r,t_{1,n}]$, we let at first $s,t\in [t_{i,n},t_{i+1,n}]$ for some $i\in\{1,\dots,k_{n}-1\}$ and $Z$ be an $\mathbb{R}^{d}$-valued random vector such that $Z\sim\mathcal{N}(0,\mathbbm{I}_{d})$. Then
\[
E\bigg[\bigg(\int_{s}^{t}|\null_{n}\dot{W}_{u}|^{q}\,du\bigg)^{p}\bigg]= E\big[|Z|^{pq}\big]\frac{(\Delta t_{i,n})^{pq/2}}{(\Delta t_{i+1,n})^{pq}}(t-s)^{p}\leq \hat{w}_{p,q}|\mathbb{T}_{n}|^{-pq/2}(t-s)^{p}
\]
for $\hat{w}_{p,q}:=E[|Z|^{pq}]c_{\mathbb{T}}^{pq}$, where $c_{\mathbb{T}}$ is the constant in~\eqref{Partition Condition}. Next, assume instead $i,j\in\{1,\dots,k_{n}-1\}$ are such that $i < j$, $s\in [t_{i,n},t_{i+1,n}]$ and $t\in [t_{j,n},t_{j+1,n}]$. In this case,
\[
\bigg(E\bigg[\bigg(\int_{s}^{t}|\null_{n}\dot{W}_{u}|^{q}\,du\bigg)^{p}\bigg]\bigg)^{1/p}\leq \hat{w}_{p,q}^{1/p}|\mathbb{T}_{n}|^{-q/2}(t-s),
\]
by the triangle inequality. Therefore, the assertion holds.
\end{proof}

\subsection{Auxiliary convergence results}\label{Auxiliary convergence results}

In this section we provide interpolation error estimates in the supremum norm for stochastic processes and several moment estimates, required to prove~\eqref{General SDE Partition Limit Equation}.

\begin{lemma}\label{Delayed Linear Interpolation Lemma}
Let $n\in\mathbb{N}$ and $x:[0,T]\rightarrow\mathbb{R}^{m}$. Then the map $L_{n}(x):[0,T]\rightarrow\mathbb{R}^{m}$ given at~\eqref{Delayed Linear Interpolation Operator} satisfies $\|L_{n}(x)^{t}\|_{\infty}$ $\leq\|x^{r}\|_{\infty}\vee\max_{j\in\{1,\dots,k_{n}-1\}:\,t_{j,n} < t}|x(t_{j,n})|$ and
\[
\|L_{n}(x)^{t} - x^{t}\|_{\infty}\leq \max_{j\in\{0,\dots,k_{n}-1\}:\,t_{j,n}\leq t}\sup_{s\in [t_{j,n},t_{j+1,n}]}|x(t_{(j-1)\vee 0,n})-x^{t}(s)|\vee |x(t_{j,n})-x^{t}(s)|
\]
for each $t\in [t_{1,n},T]$.
\end{lemma}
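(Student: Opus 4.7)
The plan is to exploit the piecewise convex-combination structure of $L_n(x)$ and then read off both estimates directly. Namely, for $j\in\{1,\dots,k_n-1\}$ and $s\in[t_{j,n},t_{j+1,n}]$, the formula~\eqref{Delayed Linear Interpolation Operator} can be rewritten as
\[
L_n(x)(s) \;=\; (1-\lambda_j(s))\,x(t_{j-1,n}) + \lambda_j(s)\,x(t_{j,n}),\qquad \lambda_j(s):=\frac{s-t_{j,n}}{t_{j+1,n}-t_{j,n}}\in[0,1],
\]
while on $[0,t_{1,n}]$ one has the degenerate case $L_n(x)(s)=x(r\wedge s)$, which coincides with $x$ on $[0,r]$ and with the constant $x(r)=x(t_{0,n})$ on $[r,t_{1,n}]$.

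For the first bound I would write $\|L_n(x)^t\|_\infty=\sup_{s\in[0,t]}|L_n(x)(s)|$. The contribution from $s\in[0,t_{1,n}]\cap[0,t]$ is bounded by $\|x^r\|_\infty$, since the values of $L_n(x)$ there are $x(u)$ with $u\in[0,r]$. On each subinterval $[t_{j,n},t_{j+1,n}]\cap[0,t]$ with $j\geq 1$, the convex combination gives $|L_n(x)(s)|\leq |x(t_{j-1,n})|\vee|x(t_{j,n})|$. The constraint $s\leq t$ forces $t_{j,n}\leq t$; in the boundary case $t_{j,n}=t$ one has $s=t_{j,n}$ and $\lambda_j(s)=0$, so only $x(t_{j-1,n})$ with $t_{j-1,n}<t$ contributes. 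For $j=1$ the value $x(t_{0,n})=x(r)$ is absorbed into $\|x^r\|_\infty$, leaving precisely the indices $j\in\{1,\dots,k_n-1\}$ with $t_{j,n}<t$ in the claimed maximum.

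For the second bound I would fix $s\in[0,T]$, set $s':=s\wedge t\in[0,t]$, and note that $L_n(x)^t(s)-x^t(s)=L_n(x)(s')-x(s')$ with $x(s')=x^t(s')$. Let $j\in\{0,\dots,k_n-1\}$ be the index with $s'\in[t_{j,n},t_{j+1,n}]$, so automatically $t_{j,n}\leq t$ and $j$ lies in the stated index range. For $j\geq 1$, subtracting $x(s')$ inside the convex combination yields
\[
L_n(x)(s')-x(s') \;=\; (1-\lambda_j(s'))\bigl(x(t_{j-1,n})-x(s')\bigr) + \lambda_j(s')\bigl(x(t_{j,n})-x(s')\bigr),
\]
whence $|L_n(x)(s')-x(s')|\leq |x(t_{(j-1)\vee 0,n})-x^t(s')|\vee|x(t_{j,n})-x^t(s')|$, and since $s'\in[t_{j,n},t_{j+1,n}]$ this is dominated by the claimed supremum. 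The $j=0$ case is handled trivially (both \emph{corner} values equal $x(r)=x(t_{0,n})=x(t_{(0-1)\vee 0,n})$), covering the degenerate initial interval. The main obstacle is not any single inequality but rather the careful boundary bookkeeping: tracking the index range, the vanishing weight at $s=t_{j,n}$, and the $j=0$ convention $t_{(j-1)\vee 0,n}=t_{0,n}$ that makes the two formulas in the lemma uniform in $j$.
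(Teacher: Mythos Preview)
Your proposal is correct and follows essentially the same approach as the paper: exploit that $L_n(x)$ is a convex combination of $x(t_{j-1,n})$ and $x(t_{j,n})$ on each $[t_{j,n},t_{j+1,n}]$, then read off both bounds directly. Your treatment is actually more explicit than the paper's, which simply fixes $s\in[t_{1,n},t]$, writes out the convex-combination estimate, and concludes with ``the assertions follow''; your careful handling of the boundary cases (the strict inequality $t_{j,n}<t$ via the vanishing weight at $s=t_{j,n}$, the absorption of $x(t_{0,n})$ into $\|x^r\|_\infty$, and the degenerate $j=0$ interval) fills in exactly the bookkeeping the paper leaves implicit.
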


\begin{proof}
Fix $s\in [t_{1,n},t]$ and let $i\in\{1,\dots,k_{n}-1\}$ be such that $s\in [t_{i,n},t_{i+1,n}]$, then $|L_{n}(x)(s)|$ $\leq |x(t_{i-1,n})|\vee |x(t_{i,n})|$, since $L_{n}(x)$ is linear on $[t_{i,n},t_{i+1,n}]$. In addition,
\begin{align*}
|L_{n}(x)(s) - x(s)|&\leq \frac{t_{i+1,n} - s}{\Delta t_{i+1,n}}|x(t_{i-1,n}) - x(s)| + \frac{s - t_{i,n}}{\Delta t_{i+1,n}}|x(t_{i,n}) - x(s)|\\
&\leq \sup_{u\in [t_{i,n},t_{i+1,n}]}|x(t_{i-1,n})-x^{t}(u)|\vee |x(t_{i,n})-x^{t}(u)|,
\end{align*}
which is readily seen, and the assertions follow.
\end{proof}

In combination with Proposition~\ref{Kolmogorov-Chentsov Proposition}, this directly gives the following result.

\begin{lemma}\label{Auxiliary Convergence Result 2}
Let $(\null_{n}X)_{n\in\mathbb{N}}$ be a sequence of $\mathbb{R}^{m}$-valued right-continuous processes for which there are $c_{0}\geq 0$, $p\geq 1$ and $q > 0$ such that
\[
E\big[|\null_{n}X_{s} - \null_{n}X_{t}|^{p}\big]\leq c_{0}|s-t|^{1 + q}
\]
for all $n\in\mathbb{N}$, $j\in\{0,\dots,k_{n}-1\}$ and $s,t\in [t_{j,n},t_{j+1,n}]$. Then there is $c_{p,q} > 0$ such that
\[
E[\|L_{n}(\null_{n}X) - \null_{n}X\|_{\infty}^{p}] \leq c_{p,q}c_{0}|\mathbb{T}_{n}|^{q}\quad\text{for any $n\in\mathbb{N}$.}
\]
\end{lemma}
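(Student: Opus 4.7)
My plan is to reduce $\|L_{n}(\null_{n}X)-\null_{n}X\|_{\infty}$ to oscillations of $\null_{n}X$ within single partition cells, apply Proposition~\ref{Kolmogorov-Chentsov Proposition} cell by cell, and sum over cells.

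Taking $t=T$ in Lemma~\ref{Delayed Linear Interpolation Lemma} gives the pointwise bound
\[
\|L_{n}(\null_{n}X)-\null_{n}X\|_{\infty}\leq\max_{j\in\{0,\ldots,k_{n}-1\}}\sup_{s\in[t_{j,n},t_{j+1,n}]}\bigl(|\null_{n}X_{t_{(j-1)\vee 0,n}}-\null_{n}X_{s}|\vee|\null_{n}X_{t_{j,n}}-\null_{n}X_{s}|\bigr).
\]
The second term in the parentheses involves points in the single cell $[t_{j,n},t_{j+1,n}]$. For $j\geq 1$ the first term involves $t_{j-1,n}$, which lies one cell earlier, and I would apply the triangle inequality
\[
|\null_{n}X_{t_{j-1,n}}-\null_{n}X_{s}|\leq|\null_{n}X_{t_{j-1,n}}-\null_{n}X_{t_{j,n}}|+|\null_{n}X_{t_{j,n}}-\null_{n}X_{s}|
\]
to express it through within-cell differences. (For $j=0$ no reduction is needed, since $t_{(j-1)\vee 0,n}=r=t_{j,n}$.)

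I would then raise to the $p$-th power using $(\max(a,b))^{p}\leq a^{p}+b^{p}$ and $(a+b)^{p}\leq 2^{p-1}(a^{p}+b^{p})$, pass from $\max_{j}$ to $\sum_{j}$, and take expectations. On each cell $[t_{j,n},t_{j+1,n}]$ the hypothesis is precisely the Kolmogorov-Chentsov moment condition, so Proposition~\ref{Kolmogorov-Chentsov Proposition} with $\alpha=0$ yields
\[
E\!\left[\sup_{s,t\in[t_{j,n},t_{j+1,n}]:\,s\neq t}|\null_{n}X_{s}-\null_{n}X_{t}|^{p}\right]\leq k_{0,p,q}\,c_{0}\,(t_{j+1,n}-t_{j,n})^{1+q},
\]
while the endpoint-to-endpoint bound $E\bigl[|\null_{n}X_{t_{j-1,n}}-\null_{n}X_{t_{j,n}}|^{p}\bigr]\leq c_{0}(t_{j,n}-t_{j-1,n})^{1+q}$ follows directly from the hypothesis.

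Every resulting sum has the form $\sum_{j}(\Delta t_{j,n})^{1+q}\leq|\mathbb{T}_{n}|^{q}\sum_{j}\Delta t_{j,n}\leq(T-r)|\mathbb{T}_{n}|^{q}$. Collecting the universal constants from the triangle inequality and from $k_{0,p,q}$ gives the desired bound with $c_{p,q}$ depending only on $p$, $q$ and $T-r$; notably, the balanced-partition constant $c_{\mathbb{T}}$ is not needed here. The only delicate point is the delayed structure of $L_{n}$, which forces the two-cell span cleanly handled by the triangle inequality above; the remaining work is routine book-keeping.
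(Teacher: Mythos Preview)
Your proposal is correct and follows essentially the same route as the paper: both invoke Lemma~\ref{Delayed Linear Interpolation Lemma}, split the two-cell contribution via the triangle inequality into a within-cell oscillation and an endpoint jump, bound $\max_{j}$ by $\sum_{j}$, apply Proposition~\ref{Kolmogorov-Chentsov Proposition} with $\alpha=0$ on each cell, and conclude via $\sum_{j}(\Delta t_{j,n})^{1+q}\leq (T-r)|\mathbb{T}_{n}|^{q}$. The only cosmetic difference is that the paper first writes the pathwise bound in $\alpha$-H\"older form and then specializes to $\alpha=0$, arriving at the explicit constant $c_{p,q}=2^{p-1}(1+k_{0,p,q})(T-r)$.
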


\begin{proof}
For $\alpha\in [0,1]$ Lemma~\ref{Delayed Linear Interpolation Lemma} entails that
\begin{align*}
\|L_{n}(\null_{n}X)-\null_{n}X\|_{\infty}&\leq |\mathbb{T}_{n}|^{\alpha}\max_{j\in\{0,\dots,k_{n}-1\}}\sup_{s,t\in [t_{j,n},t_{j+1,n}]:\,s\neq t} \frac{|\null_{n}X_{s} - \null_{n}X_{t}|}{|s-t|^{\alpha}}\\
&\quad + \max_{j\in\{1,\dots,k_{n}-1\}} |\null_{n}X_{t_{j,n}} - \null_{n}X_{t_{j-1,n}}|.
\end{align*}
Let additionally $\alpha < q/p$. Since we have $\sum_{j=0}^{k_{n-1}} (t_{j+1,n} -t_{j,n}) = T-r$, it follows by virtue of Proposition~\ref{Kolmogorov-Chentsov Proposition} that
\begin{equation*}
E\bigg[\max_{j\in\{0,\dots,k_{n}-1\}}\sup_{s,t\in [t_{j,n},t_{j+1,n}]:\,s\neq t} \frac{|\null_{n}X_{s} - \null_{n}X_{t}|^{p}}{|s-t|^{\alpha p}}\bigg] \leq k_{\alpha,p,q}c_{0}(T-r)|\mathbb{T}_{n}|^{q - \alpha p},
\end{equation*}
where the constant $k_{\alpha,p,q}$ is given by~\eqref{Kolmogorov-Chentsov Constant}. Moreover,
\[
E\big[ \max_{j\in\{1,\dots,k_{n}-1\}} |\null_{n}X_{t_{j,n}} - \null_{n}X_{t_{j-1,n}}|^{p}\big] \leq c_{0}(T-r)|\mathbb{T}_{n}|^{q}.
\]
As the function $[0,q/p)\rightarrow (2^{p},\infty)$, $\beta\mapsto k_{\beta,p,q}$ is strictly increasing, we choose $\alpha=0$ and set $c_{p,q}:=2^{p-1}(1 + k_{0,p,q})(T-r)$, which completes the proof.
\end{proof}

A consequence of Lemma~\ref{Brownian Interpolation Integral Moments Lemma} is the following moment bound.

\begin{lemma}\label{Auxiliary Convergence Result 1}
Let $(\null_{n}X)_{n\in\mathbb{N}}$ be a sequence of $\mathbb{R}_{+}$-valued measurable processes for which there are $p > 2$ and $c_{p} > 0$ such that $E[\null_{n}X_{s}^{p}] \leq c_{p}|\mathbb{T}_{n}|^{p}$ for each $s\in [r,T)$ and $n\in\mathbb{N}$. Then there is $c_{2} > 0$ satisfying
\[
E\bigg[\bigg(\int_{r}^{T}\null_{n}X_{s}|_{n}\dot{W}_{s}|\,ds\bigg)^{2}\bigg]\leq c_{2}|\mathbb{T}_{n}|\quad\text{for all $n\in\mathbb{N}$.}
\]
\end{lemma}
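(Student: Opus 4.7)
The idea is to decouple $\null_{n}X$ from the singular factor $|\null_{n}\dot{W}|$ via two applications of H\"older's inequality, so that the moment hypothesis on $\null_{n}X$ compensates exactly for the $|\mathbb{T}_{n}|^{-1/2}$ blow-up of the Brownian interpolation derivative controlled by Lemma~\ref{Brownian Interpolation Integral Moments Lemma}. I would first apply Cauchy--Schwarz pathwise in the Lebesgue integral over $[r,T]$ to obtain
\[
\Bigl(\int_{r}^{T}\null_{n}X_{s}\,|\null_{n}\dot{W}_{s}|\,ds\Bigr)^{2}\leq \Bigl(\int_{r}^{T}\null_{n}X_{s}^{2}\,ds\Bigr)\Bigl(\int_{r}^{T}|\null_{n}\dot{W}_{s}|^{2}\,ds\Bigr),
\]
and then take expectation and apply H\"older on the probability space with the conjugate exponents $a=p/2$ and $a'=p/(p-2)$, both finite precisely because $p>2$.

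For the first of the two resulting factors, Jensen's inequality applied to the probability measure $ds/(T-r)$ and the convex map $x\mapsto x^{p/2}$ yields
\[
\Bigl(\int_{r}^{T}\null_{n}X_{s}^{2}\,ds\Bigr)^{p/2}\leq (T-r)^{p/2-1}\int_{r}^{T}\null_{n}X_{s}^{p}\,ds;
\]
taking expectation and invoking $E[\null_{n}X_{s}^{p}]\leq c_{p}|\mathbb{T}_{n}|^{p}$ then produces a bound of order $|\mathbb{T}_{n}|^{p}$, which becomes order $|\mathbb{T}_{n}|^{2}$ after raising to the power $2/p$. For the second factor, Lemma~\ref{Brownian Interpolation Integral Moments Lemma} with exponent pair $(p/(p-2),2)$ delivers a bound of order $|\mathbb{T}_{n}|^{-p/(p-2)}$, which becomes $|\mathbb{T}_{n}|^{-1}$ after raising to the power $(p-2)/p$. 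Multiplying the two contributions yields the desired $O(|\mathbb{T}_{n}|)$ estimate, with explicit constant $c_{2}=(T-r)^{2}\,c_{p}^{2/p}\,\hat{w}_{p/(p-2),2}^{\,(p-2)/p}$.

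No step is genuinely delicate; the argument reduces to a computation. The only design choice---the exponent pair $(p/2,p/(p-2))$---is dictated by the competing orders of magnitude of the two factors and is available precisely under the assumption $p>2$, which explains that hypothesis in the statement.
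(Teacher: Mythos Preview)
Your argument is correct and coincides with the paper's proof: the paper also applies Cauchy--Schwarz in the time integral, then H\"older in expectation with the conjugate pair $(p/2,\,q)$ where $2/p+1/q=1$ (i.e.\ $q=p/(p-2)$), bounds the $\null_{n}X$-factor via Jensen and the moment hypothesis, and controls the $|\null_{n}\dot W|^{2}$-factor by Lemma~\ref{Brownian Interpolation Integral Moments Lemma}. Your explicit constant $c_{2}=(T-r)^{2}c_{p}^{2/p}\hat w_{p/(p-2),2}^{\,(p-2)/p}$ agrees with the paper's $c_{2}=c_{p}^{2/p}(T-r)\,\hat w_{q,2}^{1/q}(T-r)$.
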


\begin{proof}
Let $q > 1$ satisfy $2/p + 1/q = 1$, then it follows from the inequalities of Cauchy-Schwarz and \holder that
\begin{align*}
E\bigg[\bigg(\int_{r}^{T}\null_{n}X_{s} |\null_{n}\dot{W}_{s}|\,ds\bigg)^{2}\bigg]\leq \bigg(E\bigg[\bigg(\int_{r}^{T}\null_{n}X_{s}^{2}\,ds\bigg)^{p/2}\bigg]\bigg)^{2/p} c_{2,1}|\mathbb{T}_{n}|^{-1} \leq c_{2}|\mathbb{T}_{n}|,
\end{align*}
where we have set $c_{2,1}:= \hat{w}_{q,2}^{1/q}(T-r)$ and $c_{2}:=c_{p}^{2/p}(T-r)c_{2,1}$, by using the constant $\hat{w}_{q,2}$ constructed in Lemma~\ref{Brownian Interpolation Integral Moments Lemma}.
\end{proof}

To shorten the notation for the next and several other estimates in Section~\ref{Proof of the main result}, we introduce for each $n\in\mathbb{N}$ the function $\gamma_{n}:[r,T]\rightarrow [0,c_{\mathbb{T}}]$ defined via
\begin{equation}\label{Gamma Function}
\gamma_{n}(s) := \frac{\Delta s_{n}}{\Delta \overline{s}_{n}}.
\end{equation}
So, $\gamma_{n}$ vanishes on $[r,t_{1,n})$ and agrees with the constant $\Delta t_{i,n}/\Delta t_{i+1,n}$ on $[t_{i,n},t_{i+1,n})$ for each $i\in\{1,\dots,k_{n}-1\}$ and we have $\gamma_{n}(T) = 1$.

\begin{lemma}\label{Auxiliary Convergence Result 3}
Let $F:[r,T]\times C([0,T],\mathbb{R}^{m})\rightarrow\mathbb{R}^{m}$ be $d_{\infty}$-Lipschitz continuous and $(\null_{n}Y)_{n\in\mathbb{N}}$ be a sequence in $\mathscr{C}([0,T],\mathbb{R}^{m})$ for which there are $c_{0},c_{2,0}\geq 0$ such that $|F(t,x)| \leq c_{0}(1 + \|x\|_{\infty})$ and
\[
E\big[\|\null_{n}Y\|_{\infty}^{2}\big] + E\big[\|\null_{n}Y^{s} - \null_{n}Y^{t}\|_{\infty}^{2}\big]/|s-t|\leq c_{2,0}\big(1 + E\big[\|\null_{n}Y^{r}\|_{\infty}^{2}\big]\big)
\]
for all $n\in\mathbb{N}$, $s,t\in [r,T]$ with $s\neq t$ and $x\in C([0,T],\mathbb{R}^{m})$. Then there is $c_{2} > 0$ satisfying
\begin{equation*}
E\bigg[\max_{j\in\{0,\dots,k_{n}\}}\bigg|\int_{r}^{t_{j,n}}F(\underline{s}_{n},\null_{n}Y)(\gamma_{n}(s) - 1)\,ds\bigg|^{2}\bigg]\leq c_{2}|\mathbb{T}_{n}|\big(1 + E\big[\|\null_{n}Y^{r}\|_{\infty}^{2}\big]\big)
\end{equation*}
for each $n\in\mathbb{N}$.
\end{lemma}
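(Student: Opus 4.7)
The plan is to exploit the piecewise-constant structure of both $\gamma_{n}$ and $s\mapsto\underline{s}_{n}$ to reduce the integral to a discrete sum, then apply Abel summation in order to replace the non-monotone weights $\Delta t_{i,n}-\Delta t_{i+1,n}$ by \emph{first-order time differences} of $F$, which the $d_{\infty}$-Lipschitz hypothesis controls. On the initial cell $[r,t_{1,n})$ one has $\gamma_{n}=0$ and $\underline{s}_{n}=r$, while on each cell $[t_{i,n},t_{i+1,n})$ with $i\geq 1$ the factor $\gamma_{n}$ is the constant $\Delta t_{i,n}/\Delta t_{i+1,n}$ and $F(\underline{s}_{n},\null_{n}Y)=F(t_{i-1,n},\null_{n}Y)$. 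Combining the cell contributions and applying Abel summation, I would obtain for every $j\in\{2,\dots,k_{n}\}$ the identity
\[
\int_{r}^{t_{j,n}}F(\underline{s}_{n},\null_{n}Y)(\gamma_{n}(s)-1)\,ds = -F(t_{j-2,n},\null_{n}Y)\Delta t_{j,n} + \sum_{i=2}^{j-1}\bigl(F(t_{i-1,n},\null_{n}Y)-F(t_{i-2,n},\null_{n}Y)\bigr)\Delta t_{i,n}.
\]
The cases $j\in\{0,1\}$ give either $0$ or $-F(r,\null_{n}Y)\Delta t_{1,n}$ and are controlled trivially via $|F|\leq c_{0}(1+\|\cdot\|_{\infty})$.

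The boundary term is estimated immediately: using the growth bound and the first moment hypothesis,
\[
E\bigl[\max_{j}|F(t_{j-2,n},\null_{n}Y)|^{2}\Delta t_{j,n}^{2}\bigr] \leq 2c_{0}^{2}|\mathbb{T}_{n}|^{2}\bigl(1 + c_{2,0}(1+E[\|\null_{n}Y^{r}\|_{\infty}^{2}])\bigr),
\]
which is absorbed into the claimed bound since $|\mathbb{T}_{n}|\leq T-r$. For the telescoping sum I invoke $d_{\infty}$-Lipschitz continuity of $F$ with constant $\lambda$, so that $|F(t_{i-1,n},\null_{n}Y)-F(t_{i-2,n},\null_{n}Y)|\leq \lambda(|\mathbb{T}_{n}|^{1/2}+\|\null_{n}Y^{t_{i-1,n}}-\null_{n}Y^{t_{i-2,n}}\|_{\infty})$, and I would bound the running maximum over $j$ by the sum of absolute values from $i=2$ to $k_{n}-1$. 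The deterministic $|\mathbb{T}_{n}|^{1/2}$-contribution produces a term of order $\lambda^{2}|\mathbb{T}_{n}|(T-r)^{2}$ after squaring. For the stochastic contribution, Cauchy-Schwarz yields
\[
\biggl(\sum_{i=2}^{k_{n}-1}\|\null_{n}Y^{t_{i-1,n}}-\null_{n}Y^{t_{i-2,n}}\|_{\infty}\Delta t_{i,n}\biggr)^{2} \leq (T-r)\sum_{i=2}^{k_{n}-1}\|\null_{n}Y^{t_{i-1,n}}-\null_{n}Y^{t_{i-2,n}}\|_{\infty}^{2}\Delta t_{i,n},
\]
and the $L^{2}$-modulus hypothesis applied to each summand with $|t_{i-1,n}-t_{i-2,n}|\leq|\mathbb{T}_{n}|$ produces, after taking expectation, a bound of order $c_{2,0}\lambda^{2}(T-r)^{2}|\mathbb{T}_{n}|(1+E[\|\null_{n}Y^{r}\|_{\infty}^{2}])$. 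Summing the three contributions yields the required constant $c_{2}>0$.

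The only delicate step I anticipate is setting up the Abel summation so that the differences of $F$ that appear involve stopped paths whose indices $t_{i-1,n}$ and $t_{i-2,n}$ are consecutive partition points, making the $L^{2}$-modulus hypothesis directly applicable with increment $|\mathbb{T}_{n}|$. Once this discrete bookkeeping is in place, the rest is Cauchy-Schwarz combined with the two moment bounds from the hypothesis; notably, neither Proposition~\ref{Kolmogorov-Chentsov Proposition} nor condition~\eqref{Partition Condition} is needed here.
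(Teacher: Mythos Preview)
Your proof is correct and in substance coincides with the paper's. The paper's key step is the shift identity
\[
\int_{r}^{t_{j,n}}F(\underline{s}_{n},\null_{n}Y)\gamma_{n}(s)\,ds = \int_{r}^{t_{j-1,n}}F(s_{n},\null_{n}Y)\,ds,
\]
after which the integrand $F(\underline{s}_{n},\null_{n}Y)(\gamma_{n}(s)-1)$ integrated up to $t_{j,n}$ splits into $\int_{r}^{t_{j-1,n}}\bigl(F(s_{n},\null_{n}Y)-F(\underline{s}_{n},\null_{n}Y)\bigr)\,ds$ minus the last cell $\int_{t_{j-1,n}}^{t_{j,n}}F(\underline{s}_{n},\null_{n}Y)\,ds$. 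Writing this out cellwise gives exactly your Abel-summation identity, so the decomposition, the Lipschitz estimate on the difference term, and the growth-bound estimate on the boundary term are identical in both arguments. Your remark that neither Proposition~\ref{Kolmogorov-Chentsov Proposition} nor condition~\eqref{Partition Condition} is needed is also borne out by the paper's proof.
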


\begin{proof}
We may assume that $E[\|\null_{n}Y^{r}\|_{\infty}^{2}] < \infty$ and, by decomposing the integral, we can rewrite that
\[
\int_{r}^{t_{j,n}}F(\underline{s}_{n},\null_{n}Y)\gamma_{n}(s)\,ds = \int_{r}^{t_{j-1,n}}F(s_{n},\null_{n}Y)\,ds
\]
for each $j\in\{1,\dots,k_{n}\}$. Thus, let $\lambda_{0}\geq 0$ be a Lipschitz constant for $F$, then
\[
E\bigg[\max_{j\in\{1,\dots,k_{n}\}}\bigg|\int_{r}^{t_{j-1,n}}F(s_{n},\null_{n}Y) - F(\underline{s}_{n},\null_{n}Y)\,ds\bigg|^{2}\bigg]\leq c_{2,1}|\mathbb{T}_{n}|\big(1 + E\big[\|\null_{n}Y^{r}\|_{\infty}^{2}\big]\big)
\]
with $c_{2,1}:=2(T-r)^{2}\lambda_{0}^{2}(1 + c_{2,0})$. In addition, we estimate that
\[
E\bigg[\max_{j\in\{1,\dots,k_{n}\}}\bigg|\int_{t_{j-1,n}}^{t_{j,n}}F(\underline{s}_{n},\null_{n}Y)\,ds\bigg|^{2}\bigg]\leq c_{2,2}|\mathbb{T}_{n}|^{2}\big(1 + E\big[\|\null_{n}Y^{r}\|_{\infty}^{2}\big]\big),
\]
where $c_{2,2} := 2c_{0}^{2}(1 +c_{2,0})$. So, the constant $c_{2}:=2(c_{2,1} + (T-r)c_{2,2})$ yields the claim.
\end{proof}

The last moment estimate involves integrals with respect to $\null_{n}W$ and $W$, where $n\in\mathbb{N}$, and it extends Lemma 3.2 in~\cite{SupportThm}:
\begin{proposition}\label{Auxiliary Convergence Result 4}
Let $F:[r,T]\times C([0,T],\mathbb{R}^{m})\rightarrow\mathbb{R}^{m\times d}$ be $d_{\infty}$-Lipschitz continuous and $(\null_{n}Y)_{n\in\mathbb{N}}$ be a sequence in $\mathscr{C}([0,T],\mathbb{R}^{m})$. Suppose there are $c_{0}\geq 0$, $p \geq 2$ and $c_{p,0}\geq 0$ such that $|F(t,x)|\leq c_{0}(1 + \|x\|_{\infty})$ and
\[
E[\|\null_{n}Y\|_{\infty}^{p}] + E[\|\null_{n}Y^{s} - \null_{n}Y^{t}\|_{\infty}^{p}\big]/|s-t|^{p/2}\leq c_{p,0}\big(1 + E[\|\null_{n}Y^{r}\|_{\infty}^{p}]\big)
\]
for each $n\in\mathbb{N}$, $s,t\in [r,T]$ with $s\neq t$ and $x\in C([0,T],\mathbb{R}^{m})$. Then there is $c_{p} > 0$ such that
\[
E\bigg[\max_{j\in\{0,\dots,k_{n}\}}\bigg|\int_{r}^{t_{j,n}}F(\underline{s}_{n},\null_{n}Y)\,d(\null_{n}W_{s} - W_{s})\bigg|^{p}\bigg] \leq c_{p}|\mathbb{T}_{n}|^{p/2-1}\big(1 + E[\|\null_{n}Y^{r}\|_{\infty}^{p}]\big)
\]
for any $n\in\mathbb{N}$.
\end{proposition}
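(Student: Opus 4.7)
The plan is to reduce the stochastic integral
\[
I_j := \int_r^{t_{j,n}} F(\underline{s}_n, \null_n Y)\,d(\null_n W_s - W_s)
\]
to a discrete sum on the partition, apply summation by parts, and then estimate the resulting pieces separately. Since $F$ is $d_\infty$-continuous and hence non-anticipative, and $\null_n Y$ is adapted, the integrand is $\mathscr{F}_{t_{(i-1)\vee 0,n}}$-measurable and equal to $F(t_{(i-1)\vee 0,n}, \null_n Y)$ on each subinterval $[t_{i,n}, t_{i+1,n})$. Recalling that $\null_n W$ is constant on $[r, t_{1,n}]$ and that $\null_n W_{t_{i+1,n}} - \null_n W_{t_{i,n}} = \Delta W_{t_{i,n}}$ for $i \geq 1$, a direct evaluation of the two integrals yields
\[
I_j = \sum_{i=1}^{j-1} F(t_{i-1,n}, \null_n Y)\,(\Delta W_{t_{i,n}} - \Delta W_{t_{i+1,n}}) - F(r, \null_n Y)\,\Delta W_{t_{1,n}}.
\]
A single application of summation by parts then gives the decomposition $I_j = M_j - N_j$ with
\[
M_j := \sum_{i=2}^{j-1} \bigl(F(t_{i-1,n}, \null_n Y) - F(t_{i-2,n}, \null_n Y)\bigr)\,\Delta W_{t_{i,n}}, \qquad N_j := F(t_{(j-2)\vee 0,n}, \null_n Y)\,\Delta W_{t_{j,n}},
\]
where an empty sum is taken to be zero. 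This algebraic identity is the crux of the argument.

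For the martingale piece I would write $M_j = \int_r^{t_{j,n}} \xi_s\,dW_s$, where $\xi$ is the adapted step process equal to $F(t_{i-1,n}, \null_n Y) - F(t_{i-2,n}, \null_n Y)$ on $[t_{i-1,n}, t_{i,n})$ for $i = 2, \dots, k_n - 1$ and zero elsewhere. Bounding $\max_j |M_j|$ by $\sup_{v \in [r,T]} \bigl|\int_r^v \xi_s\,dW_s\bigr|$ and invoking Mao's inequality \eqref{Mao's Inequality} reduces the task to controlling $\int_r^T E[|\xi_s|^p]\,ds$. The $d_\infty$-Lipschitz continuity of $F$ combined with the hypothesized $p$-th moment bound on $\|\null_n Y^s - \null_n Y^t\|_\infty$ gives $E[|\xi_s|^p] \leq C(1 + E[\|\null_n Y^r\|_\infty^p])\,|\mathbb{T}_n|^{p/2}$ uniformly in $s$, whence $E[\max_j |M_j|^p] \leq C'(1 + E[\|\null_n Y^r\|_\infty^p])\,|\mathbb{T}_n|^{p/2}$.

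For the boundary piece, $F(t_{(j-2)\vee 0,n}, \null_n Y)$ is $\mathscr{F}_{t_{(j-2)\vee 0,n}}$-measurable, whereas $\Delta W_{t_{j,n}}$ is independent of $\mathscr{F}_{t_{(j-1)\vee 0,n}}$ with $E[|\Delta W_{t_{j,n}}|^p] = C_p (\Delta t_{j,n})^{p/2}$. Using the crude bound $\max_j |N_j|^p \leq \sum_{j=1}^{k_n} |N_j|^p$, the growth hypothesis $|F(t,x)| \leq c_0(1 + \|x\|_\infty)$, independence, and the $p$-th moment bound on $\|\null_n Y\|_\infty$, I would obtain
\[
E\bigl[\max_j |N_j|^p\bigr] \leq C\bigl(1 + E[\|\null_n Y^r\|_\infty^p]\bigr) \sum_{j=1}^{k_n} (\Delta t_{j,n})^{p/2}.
\]
Since $p \geq 2$, the elementary bound $(\Delta t_{j,n})^{p/2} \leq |\mathbb{T}_n|^{p/2 - 1}\Delta t_{j,n}$ reduces the sum to $(T-r)|\mathbb{T}_n|^{p/2 - 1}$, producing the claimed scaling.

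Combining the two estimates, the boundary contribution $N_j$ is the slower one and dictates the exponent $p/2 - 1$, while the martingale piece $M_j$ contributes the strictly better rate $|\mathbb{T}_n|^{p/2}$, which is absorbed. The main obstacle is identifying the right summation-by-parts decomposition: once one sees that the difference integral splits into a discrete stochastic integral of time-increments of $F$ (whose $L^p$-norm gains a full factor of $|\mathbb{T}_n|^{p/2}$ from the time-Lipschitz regularity of $F$) plus a single non-telescoping boundary term, the remainder of the argument reduces to a routine application of Mao's inequality and the independence of Brownian increments.
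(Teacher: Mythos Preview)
Your proof is correct and follows essentially the same route as the paper. The paper obtains the identical decomposition $I_j = M_j - N_j$ by invoking Lemma~\ref{Brownian Linear Interpolation Property Lemma} to rewrite $\int_r^{t_{j,n}} F(\underline{s}_n,\null_n Y)\,d\null_n W_s = \int_r^{t_{j-1,n}} F(s_n,\null_n Y)\,dW_s$ and then splitting the difference with the $W$-integral, whereas you arrive at it by an explicit summation by parts; the resulting two pieces and their estimates (Lipschitz increment plus maximal inequality for $M_j$, crude sum bound for $N_j$) coincide.
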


\begin{proof}
We let $E[\|\null_{n}Y^{r}\|_{\infty}^{p}] < \infty$ and apply Lemma~\ref{Brownian Linear Interpolation Property Lemma} to obtain
\[
\int_{r}^{t_{j,n}}F(\underline{s}_{n},\null_{n}Y)\,d\null_{n}W_{s} = \int_{r}^{t_{j-1,n}}F(s_{n},\null_{n}Y)\,dW_{s}\quad\text{a.s.}
\]
for all $j\in\{1,\dots,k_{n}\}$. Let $\lambda_{0}\geq 0$ denote a Lipschitz constant for $F$, then
\begin{align*}
E\bigg[\max_{j\in\{1,\dots,k_{n}\}}&\bigg|\int_{r}^{t_{j-1,n}} F(s_{n},\null_{n}Y) - F(\underline{s}_{n},\null_{n}Y)\,dW_{s}\bigg|^{p}\bigg]\leq c_{p,1}|\mathbb{T}_{n}|^{p/2}\big(1 + E[\|\null_{n}Y^{r}\|_{\infty}^{p}]\big)
\end{align*}
with $c_{p,1}:=2^{p-1}w_{p}(T-r)^{p/2}\lambda_{0}^{p}(1 + c_{p,0})$, where $w_{p}$ satisfies~\eqref{Mao's Inequality}. Moreover,
\begin{align*}
E\bigg[\max_{j\in\{1,\dots,k_{n}\}}\bigg|\int_{t_{j-1,n}}^{t_{j,n}}F(\underline{s}_{n},\null_{n}Y)\,dW_{s}\bigg|^{p}\bigg]&\leq \sum_{j=1}^{k_{n}}E\bigg[\bigg|\int_{t_{j-1,n}}^{t_{j,n}}F(\underline{s}_{n},\null_{n}Y)\,dW_{s}\bigg|^{p}\bigg] 
\\
&\leq c_{p,2}|\mathbb{T}_{n}|^{p/2-1}\big(1 + E\big[\|\null_{n}Y^{r}\|_{\infty}^{p}]\big)
\end{align*}
for $c_{p,2}:=2^{p-1}w_{p}(T-r)c_{0}^{p}(1 + c_{p,0})$. So, we set $c_{p}:=2^{p-1}((T-r)c_{p,1} + c_{p,2})$ and obtain the asserted estimate.
\end{proof}

\section{Path-dependent ODEs and SDEs: proofs}\label{sec.proofs}

We give existence and uniqueness proofs for mild solutions to path-dependent ODEs in Section~\ref{Mild solutions to path-dependent ODEs} and strong solutions to path-dependent SDEs in Section~\ref{Strong solutions to path-dependent SDEs}.

\subsection{Proof of Proposition~\ref{General ODE Proposition}}\label{Proof of Proposition 3}

Let us first derive a global estimate for mild solutions to the ODE~\eqref{General ODE}. 

\begin{lemma}\label{General ODE Lemma 1}
Under~\eqref{C.1}, any mild solution $x$ to~\eqref{General ODE} satisfies
\begin{equation}\label{General ODE Inequality 1}
\|x^{t}\|_{H,r}^{2} \leq c_{H}e^{c_{H}\int_{r}^{t}c_{0}(s)^{2}\,ds}\bigg(\|x^{r}\|_{\infty}^{2} + \int_{r}^{t}c_{0}(s)^{2}\,ds\bigg)
\end{equation}
for each $t\in [r,T]$ with $c_{H}:=2^{2}\max\{1,T-r\}$.
\end{lemma}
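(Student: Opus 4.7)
The strategy is to derive a Gronwall-type inequality for $\eta(t):=\|x^t\|_{H,r}^2$ starting from a pointwise estimate on $|\dot x|$.

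First I would use that a mild solution is absolutely continuous on $[r,T]$ with $\dot x(s)=F(s,x)$ for a.e.\ $s$, and exploit the non-anticipativity $F(s,x)=F(s,x^s)$. Applying \eqref{C.1} to the stopped path $x^s$ (which is absolutely continuous on $[r,T]$, satisfies $(x^s)^r=x^r$, and whose weak derivative vanishes on $(s,T]$, so that $\int_r^T|\dot{(x^s)}(u)|\,du=\int_r^s|\dot x(u)|\,du$) yields
\[
|\dot x(s)|\;\leq\; c_0(s)\Bigl(1+\|x^r\|_\infty+\int_r^s|\dot x(u)|\,du\Bigr)\qquad\text{for a.e.\ $s\in[r,T]$.}
\]

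Next I would apply Cauchy-Schwarz, $\int_r^s|\dot x(u)|\,du\leq (s-r)^{1/2}\phi(s)^{1/2}$ with $\phi(s):=\int_r^s|\dot x(u)|^2\,du$, and absorb $(s-r)^{1/2}\leq \max\{1,T-r\}^{1/2}$. Because $\max\{1,T-r\}^{1/2}\geq 1$, the bracketed quantity is bounded by $\max\{1,T-r\}^{1/2}(1+\|x^s\|_{H,r})$. Squaring and using $(1+a)^2\leq 2(1+a^2)$ then gives
\[
|\dot x(s)|^2\;\leq\; 2\max\{1,T-r\}\,c_0(s)^2\,\bigl(1+\|x^s\|_{H,r}^2\bigr)\qquad\text{a.e.\ on $[r,T]$.}
\]

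Integrating this on $[r,t]$ and combining with $\|x^t\|_{H,r}^2=(\|x^r\|_\infty+\phi(t)^{1/2})^2\leq 2\|x^r\|_\infty^2+2\phi(t)$, I obtain (using that $c_H=4\max\{1,T-r\}\geq 2$ so that the free term $2\|x^r\|_\infty^2$ is absorbed)
\[
\eta(t)\;\leq\; c_H\Bigl(\|x^r\|_\infty^2+\int_r^t c_0(s)^2\,ds\Bigr)+c_H\int_r^t c_0(s)^2\,\eta(s)\,ds.
\]
Since the inhomogeneity is non-decreasing in $t$, Gronwall-Bellman closes the estimate to the stated form \eqref{General ODE Inequality 1}.

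This lemma presents no conceptual obstacle; the only point requiring care is the bookkeeping of the elementary constants so that the factor coming out of the $(1+a)^2\leq 2(1+a^2)$ step, the $\max\{1,T-r\}$ from Cauchy-Schwarz, and the factor $2$ from $\|x^t\|_{H,r}^2\leq 2\|x^r\|_\infty^2+2\phi(t)$ all amalgamate cleanly into the single constant $c_H=4\max\{1,T-r\}$.
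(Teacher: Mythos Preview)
Your proof is correct and follows the same approach as the paper: both apply \eqref{C.1} to the stopped path $x^{s}$ (using non-anticipativity) to bound $|\dot{x}(s)|^{2}$, integrate, combine with $\|x^{t}\|_{H,r}^{2}\leq 2\|x^{r}\|_{\infty}^{2}+2\int_{r}^{t}|\dot{x}(s)|^{2}\,ds$, and close with Gronwall. You have simply made explicit the constant-tracking steps that the paper compresses into ``We readily estimate\ldots Hence, the claim follows from Gronwall's inequality.''
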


\begin{proof}
We readily estimate that
\[
\|x^{t}\|_{H,r}^{2} \leq 2\|x^{r}\|_{\infty}^{2} + 2\int_{r}^{t}c_{0}(s)^{2}\bigg(1 + \|x^{r}\|_{\infty} + \int_{r}^{s}|\dot{x}(u)|\,du\bigg)^{2}\,ds
\]
for all $t\in [r,T]$, which shows that $\|x\|_{H,r}$ is finite. Hence, the claim follows from Gronwall's inequality.
\end{proof}

Now we check the uniqueness of mild solutions, which implies uniqueness for classical solutions.

\begin{lemma}\label{General ODE Lemma 2}
Assume that~\eqref{C.1} and~\eqref{C.2} hold. Then any two mild solutions $x$ and $\tilde{x}$ to~\eqref{General ODE} that satisfy $x^{r} = \tilde{x}^{r}$ must coincide. 
\end{lemma}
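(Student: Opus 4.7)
The plan is to reduce the uniqueness statement to a standard Gronwall argument applied to the quantity
\[
g(t) := \int_{r}^{t} |\dot{x}(u) - \dot{\tilde{x}}(u)|^{2}\,du, \qquad t\in[r,T].
\]
First, by Lemma~\ref{General ODE Lemma 1} both $x$ and $\tilde{x}$ lie in $H_{r}^{1}([0,T],\mathbb{R}^{m})$, so there exists $n\in\mathbb{N}$ with $\|x\|_{H,r}\vee\|\tilde{x}\|_{H,r}\leq n$, which makes the local Lipschitz bound~\eqref{C.2} available with the function $\lambda_{n}$. Exploiting non-anticipativity, for every $s\in[r,T]$ we have $F(s,x)=F(s,x^{s})$ and analogously for $\tilde{x}$, so the stopped paths are the natural objects to bound.

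The crucial observation is the following identity for stopped paths: since $x^{r}=\tilde{x}^{r}$, the difference $x^{s}-\tilde{x}^{s}$ vanishes on $[0,r]$ in $\|\cdot\|_{\infty}$ and its weak derivative coincides with $\dot{x}-\dot{\tilde{x}}$ on $[r,s]$ and is zero on $(s,T]$. Consequently,
\[
\|x^{s}-\tilde{x}^{s}\|_{H,r}^{2} \;=\; \int_{r}^{s}|\dot{x}(u)-\dot{\tilde{x}}(u)|^{2}\,du \;=\; g(s).
\]
Applying~\eqref{C.2} followed by squaring gives $|F(s,x)-F(s,\tilde{x})|^{2}\leq \lambda_{n}(s)^{2}\,g(s)$. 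Since the mild solutions share the initial value $x(r)=\tilde{x}(r)$ and since $\dot{x}(s)-\dot{\tilde{x}}(s)=F(s,x)-F(s,\tilde{x})$ a.e. on $[r,T]$, integrating this squared identity yields
\[
g(t) \;\leq\; \int_{r}^{t}\lambda_{n}(s)^{2}\,g(s)\,ds \qquad\text{for all $t\in[r,T]$.}
\]

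Because $\int_{r}^{T}\lambda_{n}(s)^{2}\,ds<\infty$ and $g$ is finite and non-decreasing, Gronwall's inequality forces $g\equiv 0$. Therefore $\dot{x}=\dot{\tilde{x}}$ a.e.\ on $[r,T]$, and together with $x(r)=\tilde{x}(r)$ and $x^{r}=\tilde{x}^{r}$ this yields $x=\tilde{x}$ throughout $[0,T]$. The only subtle step, and the one I regard as the main obstacle, is correctly identifying $\|x^{s}-\tilde{x}^{s}\|_{H,r}^{2}$ with $g(s)$: the delayed Cameron-Martin norm is defined as an integral over all of $[r,T]$, and it is precisely the stopping-at-$s$ combined with the coincidence $x^{r}=\tilde{x}^{r}$ that collapses this global integral into the desired local one and makes the Gronwall loop close.
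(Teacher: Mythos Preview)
Your proof is correct and follows essentially the same approach as the paper: invoke Lemma~\ref{General ODE Lemma 1} to get a common bound $n$, apply~\eqref{C.2} to the stopped paths to obtain $\|x^{t}-\tilde{x}^{t}\|_{H,r}^{2}\leq \int_{r}^{t}\lambda_{n}(s)^{2}\|x^{s}-\tilde{x}^{s}\|_{H,r}^{2}\,ds$, and conclude by Gronwall. Your explicit identification $\|x^{s}-\tilde{x}^{s}\|_{H,r}^{2}=g(s)$ simply unpacks what the paper leaves implicit.
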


\begin{proof}
By Lemma~\ref{General ODE Lemma 1}, there is $n\in\mathbb{N}$ such that $\|x\|_{H,r}\vee \|\tilde{x}\|_{H,r}\leq n$. Thus,
\begin{align*}
\|x^{t} - \tilde{x}^{t}\|_{H,r}^{2} &\leq \int_{r}^{t}\lambda_{n}^{2}(s)\|x^{s} - \tilde{x}^{s}\|_{H,r}^{2}\,ds
\end{align*}
for every $t\in [r,T]$ and Gronwall's inequality implies that $x=\tilde{x}$.
\end{proof}

\begin{proof}[Proof of Proposition~\ref{General ODE Proposition}]
Since uniqueness follows from Lemma~\ref{General ODE Lemma 2}, we directly turn to the existence assertion and define $\mathscr{H}$ to be the closed and bounded set of all $x\in H_{r}^{1}([0,T],\mathbb{R}^{m})$ satisfying $x^{r} =\hat{x}^{r}$ and the estimate~\eqref{General ODE Inequality 1}.

According to Lemma~\ref{General ODE Lemma 1}, a path $x\in C([0,T],\mathbb{R}^{m})$ is a mild solution to~\eqref{General ODE} such that $x^{r} = \hat{x}^{r}$ if and only if $x\in\mathscr{H}$ and it is a fixed-point of the operator $\psi:\mathscr{H}\rightarrow H_{r}^{1}([0,T],\mathbb{R}^{m})$ given by
\[
\psi(y)(t):= x_{0}(t) + \int_{r}^{r\vee t}F(s,y)\,ds.
\]
We remark that $\psi$ maps $\mathscr{H}$ into itself. Indeed, this follows by inserting~\eqref{General ODE Inequality 1} into the inequality $\|\psi(x)^{t}\|_{H,r}^{2}\leq c_{H}\|x_{0}\|_{\infty}^{2} + c_{H}\int_{r}^{t}c_{0}(s)^{2}(1 + \|x^{s}\|_{H,r}^{2})\,ds$, valid for every $x\in\mathscr{H}$ and $t\in [r,T]$. Because $x_{n} = \psi(x_{n-1})$ for each $n\in\mathbb{N}$, we now know that $(x_{n})_{n\in\mathbb{N}_{0}}$ is a sequence in $\mathscr{H}$.

Next, let us choose $l\in\mathbb{N}$ such that $\|x\|_{H,r}\leq l$ for all $x\in\mathscr{H}$. Then we get that $\|\psi(x)^{t} - \psi(\tilde{x})^{t}\|_{H,r}^{2}\leq \int_{r}^{t}\lambda_{l}(s)^{2}\|x^{s} - \tilde{x}^{s}\|_{H,r}^{2}\,ds$ for any $x,\tilde{x}\in\mathscr{H}$ and $t\in [r,T]$, which shows that $\psi$ is $\|\cdot\|_{H,r}$-Lipschitz continuous. It follows inductively that
\[
\|x_{n+1}^{t} - x_{n}^{t}\|_{H,r}^{2}\leq \frac{\delta^{2}}{n!}\bigg(\int_{r}^{t}\lambda_{l}(s)^{2}\,ds\bigg)^{n}
\]
for every $n\in\mathbb{N}_{0}$ and $t\in [r,T]$, where we have set $\delta:=\|\psi(x_{0}) - x_{0}\|_{H,r}$. Hence, the triangle inequality gives us that
\[
\|x_{n} - x_{k}\|_{H,r}\leq \delta\sum_{i=k}^{n-1}\bigg(\frac{1}{i!}\bigg)^{1/2}\bigg(\int_{r}^{T}\lambda_{l}(s)^{2}\,ds\bigg)^{i/2}
\]
for all $k,n\in\mathbb{N}_{0}$ with $k < n$. The ratio test yields that the series $\sum_{i=0}^{\infty}(1/i!)^{1/2}u^{i/2}$ converges absolutely for all $u\geq 0$. So, $\lim_{k\uparrow\infty}\sup_{n\in\mathbb{N}:\,n\geq k} \|x_{n} - x_{k}\|_{H,r} = 0$. 

Since $\mathscr{H}$ is closed with respect to the complete norm $\|\cdot\|_{H,r}$, there is a unique $x\in\mathscr{H}$ such that $\lim_{n\uparrow\infty} \|x_{n} - x\|_{H,r}=0$. Lipschitz continuity of $\psi$ implies that $\lim_{n\uparrow\infty} \|x_{n+1} - \psi(x)\|_{H,r}=0$. For this reason, $x = \psi(x)$ and the proposition is established.
\end{proof}

\subsection{Proof of Proposition~\ref{General SDE Proposition}}\label{sec.sdeproof}

At first, let us deduce a global estimate for any solution to the SDE~\eqref{SDE}. For this purpose, whenever $p > 4$ and~\eqref{C.3} holds, we set $
c_{p}:=(\int_{r}^{T}c_{0}(s)^{2}\,ds)^{p/2} + w_{p} \tilde{c}_{0}^{p}$, where $w_{p}$ is the constant appearing in~\eqref{Mao's Inequality}.

\begin{lemma}\label{General SDE Lemma 1}
Let~\eqref{C.3} be valid. Then for each $p > 4$ and $\alpha\in [0,1/2-2/p)$, any solution $X$ to~\eqref{SDE} satisfies
\begin{equation}\label{General SDE Inequality 1}
E[\|X^{t}\|_{\alpha,r}^{p}] \leq c_{\alpha,p}e^{c_{\alpha,p}c_{p}(t-r)}\big(E[\|X^{r}\|_{\infty}^{p}] + c_{p}(t-r)\big)
\end{equation}
for all $t\in [r,T]$ with $c_{\alpha,p}:=8^{p-1}\max\{1,T-r\}^{p/2-1}k_{\alpha,p,p/2-2}$, where $k_{\alpha,p,p/2-2}$ is given by~\eqref{Kolmogorov-Chentsov Constant} for the choice $q=p/2-2$.
\end{lemma}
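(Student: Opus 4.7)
The target estimate is Gronwall-type in $f(t):=E[\|X^{t}\|_{\alpha,r}^{p}]$, so the strategy is to (a) derive an increment-moment inequality from the SDE, (b) convert it to a H\"older-norm bound via the Kolmogorov-Chentsov proposition just proved, and (c) close a linear Gronwall loop to produce the exponential factor.

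First I would decompose
\[
\|X^{t}\|_{\alpha,r}^{p}\leq 2^{p-1}\|X^{r}\|_{\infty}^{p} + 2^{p-1}\bigg(\sup_{s\neq u\in [r,T]}\frac{|X^{t}_{s}-X^{t}_{u}|}{|s-u|^{\alpha}}\bigg)^{p},
\]
so that only the H\"older seminorm of the \emph{stopped} process needs attention, and its non-trivial increments lie in $[r,t]$. For $r\leq s\leq u\leq t$ I would use the SDE, split as $X_{u}-X_{s}=\int_{s}^{u}b(\tau,X)\,d\tau+\int_{s}^{u}\sigma(\tau,X)\,dW_{\tau}$, and apply Cauchy--Schwarz together with Jensen to the drift (exploiting $\int_{r}^{T}c_{0}(s)^{2}\,ds<\infty$ and condition~\eqref{C.3}) and Mao's inequality~\eqref{Mao's Inequality} together with $|\sigma(t,x)|\leq\tilde{c}_{0}(1+\|x\|_{\infty})$ to the stochastic integral. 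This should yield
\[
E[|X_{u}-X_{s}|^{p}]\leq C\,c_{p}\,(u-s)^{p/2-1}\int_{s}^{u}\bigl(1+E[\|X^{\tau}\|_{\infty}^{p}]\bigr)\,d\tau
\]
for a universal $C=C(p)$, with $c_{p}=(\int_{r}^{T}c_{0}^{2})^{p/2}+w_{p}\tilde{c}_{0}^{p}$ absorbing both contributions.

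Next, I would bound the integrand by $\sup_{\tau\leq t}(1+E[\|X^{\tau}\|_{\infty}^{p}])$, giving the Kolmogorov-Chentsov input $E[|X^{t}_{s}-X^{t}_{u}|^{p}]\leq \hat{c}_{0}(u-s)^{1+q}$ for $s,u\in[r,T]$ and $q:=p/2-2$ (the mixed-indices and $s,u>t$ cases are trivial or reduce to the above, so the bound propagates to the stopped process). Proposition~\ref{Kolmogorov-Chentsov Proposition} then produces, for $\alpha\in[0,1/2-2/p)=[0,q/p)$, a bound on $E[\,[X^{t}]_{\alpha,r}^{p}\,]$ featuring the constant $k_{\alpha,p,p/2-2}$ together with the prefactor $(T-r)^{p/2-1-\alpha p}\leq \max\{1,T-r\}^{p/2-1}$, exactly the shape appearing in $c_{\alpha,p}=8^{p-1}\max\{1,T-r\}^{p/2-1}k_{\alpha,p,p/2-2}$. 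Using $\|x^{\tau}\|_{\infty}\leq\|x^{\tau}\|_{\alpha,r}$, this allows me to absorb the sup into the integrand before bounding and keep the dependence linear in $f$.

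Combining the two contributions and keeping the drift/diffusion time-integral in its integrated form rather than taking the supremum, I expect to arrive at
\[
f(t)\leq c_{\alpha,p}\,E[\|X^{r}\|_{\infty}^{p}] + c_{\alpha,p}\,c_{p}\int_{r}^{t}\bigl(1+f(\tau)\bigr)\,d\tau,
\]
at which point the integral form of Gronwall's inequality immediately yields the claimed estimate~\eqref{General SDE Inequality 1}. The main obstacle is simultaneously (i) controlling constants carefully enough to reach the stated form of $c_{\alpha,p}$, in particular handling the factor $\max\{1,T-r\}^{p/2-1}$ arising from either Cauchy--Schwarz on the drift or Mao's prefactor $(u-s)^{p/2-1}$, and (ii) justifying that $f(t)$ is \emph{a priori} finite before applying Gronwall--this requires a preliminary localisation, e.g.~stopping at $\tau_{N}:=\inf\{t\geq r:\|X^{t}\|_{\infty}\geq N\}\wedge T$, running the above argument on $X^{t\wedge\tau_{N}}$ to get a uniform-in-$N$ bound, and then letting $N\uparrow\infty$ by monotone convergence.
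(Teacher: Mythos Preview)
Your proposal is correct and follows essentially the same route as the paper: derive an increment bound of the form $E[|X_{u}-X_{v}|^{p}]\leq 4^{p-1}c_{p}(v-u)^{p/2-1}\int_{r}^{t}(1+E[\|X^{s}\|_{\infty}^{p}])\,ds$ from Cauchy--Schwarz on the drift and~\eqref{Mao's Inequality} on the stochastic integral, feed it into Proposition~\ref{Kolmogorov-Chentsov Proposition} with $q=p/2-2$ on the interval $[r,t]$, combine with the $2^{p-1}$ from the norm decomposition to produce $8^{p-1}k_{\alpha,p,p/2-2}$, and close via Gronwall after localising with $\tau_{N}$. One small correction: for the passage $N\uparrow\infty$ use Fatou's lemma rather than monotone convergence, since $\|X^{t\wedge\tau_{N}}\|_{\alpha,r}$ need not be monotone in $N$ (it does converge a.s.~to $\|X^{t}\|_{\alpha,r}$ because $\tau_{N}\uparrow\infty$).
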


\begin{proof}
We assume that $E[\|X^{r}\|_{\infty}^{p}] < \infty$ and let $n\in\mathbb{N}$. Then the stopping time $\tau_{n}:=\inf\{t\in [0,T]\,|\,|X_{t}|\geq n\}\vee r$ satisfies $\|X^{\tau_{n}}\|_{\infty}\leq \|X^{r}\|_{\infty}\vee n$ and we get that
\[
E[|X_{u}^{\tau_{n}} - X_{v}^{\tau_{n}}|^{p}] \leq 4^{p-1} c_{p}(v-u)^{p/2-1}\int_{r}^{t}1 + E[\|X^{s\wedge\tau_{n}}\|_{\infty}^{p}]\,ds
\]
for every $t\in [r,T]$ and $u,v\in [r,t]$ with $u \leq v$, by the inequalities of Jensen and Cauchy-Schwarz and~\eqref{Mao's Inequality}. Therefore, it follows from Proposition~\ref{Kolmogorov-Chentsov Proposition} that
\begin{align*}
E[\|X^{t\wedge\tau_{n}}\|_{\alpha,r}^{p}]&\leq 2^{p-1}E[\|X^{r}\|_{\infty}^{p}]\\
&\quad + 8^{p-1}k_{\alpha,p,p/2-2}c_{p}(t-r)^{p(1/2-\alpha)-1}\int_{r}^{t}1 + E[\|X^{s\wedge\tau_{n}}\|_{\infty}^{p}]\,ds,
\end{align*}
showing in particular that $E[\|X^{\tau_{n}}\|_{\alpha,r}^{p}]$ is finite. Thus, Gronwall's inequality and Fatou's lemma lead to the claimed estimate.
\end{proof}

\begin{remark}\label{General SDE Remark 2}
If instead $\alpha\in [0,1/2)$ and $p\geq 1$ satisfy $\alpha\geq 1/2 - 2/p$, then we still have that $E[\|X\|_{\alpha,r}^{p}]\leq (E[\|X\|_{\alpha,r}^{q}])^{p/q} < \infty$ for any $q > \max\{p,4\}$ with $\alpha < 1/2 - 2/q$, by \holders inequality.
\end{remark}

\begin{lemma}\label{General SDE Lemma 2}
Under~\eqref{C.5}, pathwise uniqueness holds for~\eqref{SDE}.
\end{lemma}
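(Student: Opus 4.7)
The plan is to prove pathwise uniqueness by a standard localization-and-Gronwall argument adapted to the path-dependent, only locally Lipschitz setting. Given two solutions $X$ and $\tilde X$ of~\eqref{SDE} with $X_s=\tilde X_s$ for $s\in[0,r]$ a.s., I introduce for each $n\in\mathbb{N}$ the stopping time
\[
\tau_{n}:=\inf\{t\in[r,T]\,|\,\|X^{t}\|_{\infty}\vee\|\tilde X^{t}\|_{\infty}\geq n\}\wedge T,
\]
which is $(\mathscr{F}_{t})$-adapted with $\tau_n\uparrow T$ a.s.\ by the a.s.\ continuity of the paths of $X$ and $\tilde X$. On $[r,\tau_n]$ both stopped paths stay in the ball of radius $n$, so condition~\eqref{C.5} becomes applicable with the $L^2$-integrable bound $\lambda_n$.

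Setting $Z^{(n)}_t := \sup_{u\in[r,r\vee t]}|X_{u\wedge\tau_{n}}-\tilde X_{u\wedge\tau_{n}}|^{2}$, I write
\[
X_{t\wedge\tau_n}-\tilde X_{t\wedge\tau_n}=\int_{r}^{t\wedge\tau_n}(b(s,X)-b(s,\tilde X))\,ds+\int_{r}^{t\wedge\tau_n}(\sigma(s,X)-\sigma(s,\tilde X))\,dW_{s}
\]
and bound $|X_{u\wedge\tau_n}-\tilde X_{u\wedge\tau_n}|^{2}$ by the usual $(a+b)^2\leq 2a^2+2b^2$ split. Passing to the supremum over $u\in[r,t]$ and taking expectation, Jensen's inequality controls the drift term while Doob's inequality combined with the It\^o isometry handles the martingale term, producing the estimate
\[
E[Z^{(n)}_{t}]\leq C\int_{r}^{t}\lambda_{n}(s)^{2}\,E\bigl[\|X^{s\wedge\tau_{n}}-\tilde X^{s\wedge\tau_{n}}\|_{\infty}^{2}\bigr]\,ds\leq C\int_{r}^{t}\lambda_{n}(s)^{2}\,E[Z^{(n)}_{s}]\,ds
\]
for some universal $C>0$, where I have used that $\|X^{s\wedge\tau_n}-\tilde X^{s\wedge\tau_n}\|_\infty^2= Z^{(n)}_s$ since the two paths agree on $[0,r]$.

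To apply Gronwall's inequality, I first need to know that $E[Z^{(n)}_t]<\infty$. This follows from Lemma~\ref{General SDE Lemma 1} (equivalently, from the trivial bound $Z^{(n)}_t\leq(2n)^2$ on $\{\tau_n\leq T\}\cup\{\tau_n>T\}$: the stopped processes are bounded by $2n$ in supremum norm). With $\int_{r}^{T}\lambda_{n}(s)^{2}\,ds<\infty$, Gronwall's inequality then yields $E[Z^{(n)}_{T}]=0$, so $X$ and $\tilde X$ are indistinguishable on $[0,\tau_{n}]$. Letting $n\uparrow\infty$ and using $\tau_n\uparrow T$ a.s.\ concludes the proof.

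The only real subtlety is the localization: the supremum norm in~\eqref{C.5} forces me to use stopping times that control $\|X^t\|_\infty$ simultaneously for both solutions, and the time-dependent Lipschitz constant $\lambda_n(s)$ has to be absorbed through the $L^{2}$-integral form of Gronwall's inequality rather than a multiplicative one. Everything else is routine.
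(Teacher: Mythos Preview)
Your proof is correct and follows essentially the same localization-and-Gronwall approach as the paper: the paper also introduces stopping times $\tau_{n}$ bounding both solutions, derives the same integral inequality (with explicit constant $c_{2}=2(T-r+w_{2})$ via~\eqref{Mao's Inequality} in place of your Doob/It\^o isometry step), applies Gronwall, and lets $n\uparrow\infty$. The only cosmetic difference is that the paper defines $\tau_{n}$ via pointwise values $|X_{t}|\geq n$ rather than running suprema, which for continuous paths gives the same stopped bounds.
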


\begin{proof}
Let $X$ and $\tilde{X}$ be two weak solutions to~\eqref{SDE} defined on a common filtered probability space $(\tilde{\Omega},\tilde{\mathscr{F}},(\tilde{\mathscr{F}}_{t})_{t\in [0,T]},\tilde{P})$ that satisfies the usual conditions and on which there is a standard $d$-dimensional $(\tilde{\mathscr{F}}_{t})_{t\in [0,T]}$-Brownian motion $\tilde{W}$ such that $X^{r} = \tilde{X}^{r}$ a.s. 

For fixed $n\in\mathbb{N}$ we set $\tau_{n} :=\inf\{t\in [0,T]\,|\,|X_{t}|\geq n\text{ or } |\tilde{X}_{t}|\geq n\}\vee r$, ensuring that $\|X^{\tau_{n}}\|_{\infty}\vee \|\tilde{X}^{\tau_{n}}\|_{\infty}\leq \|X^{r}\|_{\infty}\vee n$ and $\|X^{\tau_{n}}-\tilde{X}^{\tau_{n}}\|_{\infty}\leq 2n$ a.s. Then with $c_{2}:=2(T-r + w_{2})$ we obtain that
\[
\tilde{E}\big[\|X^{t\wedge\tau_{n}} - \tilde{X}^{t\wedge\tau_{n}}\|_{\infty}^{2}\big]\leq c_{2}\int_{r}^{t}\lambda_{n}(s)^{2}\tilde{E}\big[\|X^{s\wedge\tau_{n}} - \tilde{X}^{s\wedge\tau_{n}}\|_{\infty}^{2}\big]\,ds
\]
for any $t\in [r,T]$. So, $X^{\tau_{n}} = \tilde{X}^{\tau_{n}}$ a.s., by Gronwall's inequality. As $\tau_{n}\leq \tau_{n+1}$ for all $n\in\mathbb{N}$ and $\sup_{n\in\mathbb{N}}\tau_{n} = \infty$, we get that $X_{t} = \lim_{n\uparrow\infty} X_{t}^{\tau_{n}}$ $= \lim_{n\uparrow\infty}\tilde{X}_{t}^{\tau_{n}} = \tilde{X}_{t}$ a.s.~for all $t\in [r,T]$. Right-continuity of paths implies that $X=\tilde{X}$ a.s.
\end{proof}

\begin{proof}[Proof of Proposition~\ref{General SDE Proposition}]
We define $\mathscr{H}$ to be the bounded set of all $X\in\mathscr{C}([0,T],\mathbb{R}^{m})$ satisfying $X^{r} = \hat{x}^{r}$ a.s.~and the estimate~\eqref{General SDE Inequality 1} for any $p > 4$ and $\alpha\in [0,1/2 -2/p)$. Then Remark~\ref{General SDE Remark 2} entails that $\mathscr{H}\subset\mathscr{C}_{r,\infty}^{1/2-}([0,T],\mathbb{R}^{m})$.

By Lemma~\ref{General SDE Lemma 1}, a process $X\in\mathscr{C}([0,T],\mathbb{R}^{m})$ is a solution to~\eqref{SDE} satisfying $X^{r} = \hat{x}^{r}$ a.s.~if and only if $X\in\mathscr{H}$ and it is an a.s.~fixed-point of the operator $\Psi:\mathscr{H}\rightarrow\mathscr{C}_{r,\infty}^{1/2-}([0,T],\mathbb{R}^{m})$ specified by requiring that
\[
\Psi(Y)_{t} = \null_{0}X_{t} + \int_{r}^{r\vee t}b(s,Y)\,ds + \int_{r}^{r\vee t}\sigma(s,Y)\,dW_{s}
\]
for all $t\in [0,T]$ a.s. We stress the fact that, due to Proposition~\ref{Kolmogorov-Chentsov Proposition}, for every $X\in\mathscr{H}$, $p > 4$ and $\alpha\in [0,1/2-2/p)$ it follows that
\[
E[\|\Psi(X)^{t}\|_{\alpha,r}^{p}]\leq c_{\alpha,p} E[\|\null_{0}X\|_{\infty}^{p}] + c_{\alpha,p}c_{p}\int_{r}^{t}1 + E[\|X^{s}\|_{\alpha,r}^{p}]\,ds
\]
for each $t\in [r,T]$. Thus, $\Psi(\mathscr{H})\subset\mathscr{H}$ follows from plugging~\eqref{General SDE Inequality 1} into the above inequality. Since $\null_{n}X = \Psi(\null_{n-1}X)$ a.s.~for all $n\in\mathbb{N}$, we have shown that $(\null_{n}X)_{n\in\mathbb{N}_{0}}$ is a sequence in $\mathscr{H}$.

Next, we choose $\alpha\in [0,1/2)$ and $p > 4$ such that $\alpha_{0}\leq \alpha < 1/2 - 2/p$, where $\alpha_{0}$ is the constant in~\eqref{C.4}. We set $\overline{c}_{p}:=(\int_{r}^{T}\lambda_{0}(s)^{2}\,ds)^{p/2} + w_{p}\tilde{\lambda}_{0}^{p}$, then it follows from Proposition~\ref{Kolmogorov-Chentsov Proposition} that any $X,\tilde{X}\in\mathscr{H}$ satisfy
\[
E[\|\Psi(X)^{t} - \Psi(\tilde{X})^{t}\|_{\alpha,r}^{p}] \leq c_{\alpha,p}\overline{c}_{p}\int_{r}^{t}E[\|X^{s} - \tilde{X}^{s}\|_{\alpha,r}^{p}]\,ds
\]
for all $t\in [r,T]$, since we can use that $\|x\|_{\alpha_{0},r}\leq \max\{1,T-r\}^{\alpha-\alpha_{0}}\|x\|_{\alpha,r}$ for every $x\in C_{r}^{\alpha}([0,T],\mathbb{R}^{m})$. Hence, Gronwall's inequality entails that there is at most a unique solution $X$ to~\eqref{SDE} such that $X^{r} = \hat{x}^{r}$ a.s. 

We infer from the above inequality that $\Psi$ is Lipschitz continuous with respect to the seminorm~\eqref{General SDE Seminorm}. In addition, 
\[
E[\|\null_{n+1}X^{t} - \null_{n}X^{t}\|_{\alpha,r}^{p}]\leq \frac{\delta^{p}}{n!}(c_{\alpha,p}\overline{c}_{p})^{n}(t-r)^{n}
\]
for each $n\in\mathbb{N}_{0}$ and $t\in [r,T]$, by induction with $\delta:=(E[\|\Psi(\null_{0}X) - \null_{0}X\|_{\alpha,r}^{p}])^{1/p}$. Hence, the triangle inequality gives
\[
\big(E\big[\|\null_{n}X - \null_{k}X\|_{\alpha,r}^{p}\big]\big)^{1/p} \leq \delta \sum_{i=k}^{n-1}\bigg(\frac{1}{i!}\bigg)^{1/p}(c_{\alpha,p}\overline{c}_{p})^{i/p}(T-r)^{i/p}
\]
for all $k,n\in\mathbb{N}_{0}$ with $k < n$. The ratio test shows that the series $\sum_{i=0}^{\infty}(1/i!)^{1/p}u^{i/p}$ converges absolutely for any $u\geq 0$. So, $\lim_{k\uparrow\infty}\sup_{n\in\mathbb{N}:\,n\geq k} E[\|\null_{n}X - \null_{k}X\|_{\alpha,r}^{p}] = 0$.

Due to Proposition~\ref{Hoelder Completeness Proposition 1}, there exists, up to indistinguishability, a unique process $X\in\mathscr{H}$ such that
\begin{equation}\label{General SDE Limit Equation}
\lim_{n\uparrow\infty} E[\|\null_{n}X - X\|_{\alpha,r}^{p}] = 0.
\end{equation}
Lipschitz continuity of $\Psi$ implies that $\lim_{n\uparrow\infty} E[\|\null_{n+1}X - \Psi(X)\|_{\alpha,r}^{p}] = 0$. For this reason, $X = \Psi(X)$ a.s. As $\alpha$ and $p$ have been arbitrarily chosen,~\eqref{General SDE Limit Equation} must hold for any $\alpha\in [0,1/2)$ and $p > 4$ such that $\alpha_{0}\leq \alpha < 1/2 - 2/p$.

Finally, if $\alpha\in [0,1/2)$ and $p\geq 1$ are such that $\alpha_{0}\leq \alpha < 1/2 - 2/p$ fails, then~\eqref{General SDE Limit Equation} is still true. Indeed, when proving this fact, we may, if necessary, replace $\alpha$ by $\alpha_{0}$ to ensure that $\alpha_{0}\leq \alpha$ is valid, since we have
\[
\|x\|_{\alpha,r}\leq \max\{1,T-r\}^{\alpha_{0}-\alpha}\|x\|_{\alpha_{0},r}\quad\text{for all $x\in C_{r}^{\alpha_{0}}([0,T],\mathbb{R}^{m})$}
\]
whenever $\alpha_{0} > \alpha$. Next, if $\alpha_{0}\leq \alpha$ but $\alpha$ $\geq 1/2 - 2/p$, we take $q > \max\{p,4\}$ with $\alpha < 1/2 - 2/q$ and use that $E[\|\null_{n}X-X\|_{\alpha,r}^{p}]$ $\leq (E[\|\null_{n}X-X\|_{\alpha,r}^{q}])^{p/q}$ for all $n\in\mathbb{N}$ to infer the desired result, which completes the proof.
\end{proof}

\section{Proof of the main result}\label{Proof of the main result}

\subsection{Decomposition into remainder terms}\label{Decomposition into remainder terms}

Let us deduce a moment estimate for any solution to~\eqref{General Sequence SDE} that is independent of $n\in\mathbb{N}$, which generalizes Proposition 3.1 in~\cite{SupportThm}.

\begin{proposition}\label{General Convergence Proposition 1}
Let~\eqref{C.6} be valid, $h\in H_{r}^{1}([0,T],\mathbb{R}^{d})$ and $\overline{B}$ be $d_{\infty}$-Lipschitz continuous. Then for each $p\geq 2$ there is $c_{p} > 0$ such that any $n\in\mathbb{N}$ and each solution $\null_{n}Y$ to~\eqref{General Sequence SDE} satisfy
\begin{equation}\label{General Convergence Inequality 1}
E[\|\null_{n}Y\|_{\infty}^{p}] + E[\|\null_{n}Y^{s} - \null_{n}Y^{t}\|_{\infty}^{p}]/|s-t|^{p/2}\leq c_{p}\big(1 + E[\|\null_{n}Y^{r}\|_{\infty}^{p}]\big)
\end{equation}
for all $s,t\in [r,T]$ with $s\neq t$.
\end{proposition}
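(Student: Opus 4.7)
The plan is to establish both halves of~\eqref{General Convergence Inequality 1} simultaneously through a localization-plus-Gronwall scheme, with the real work concentrated on the ``interpolated Brownian drift'' term $I_{3}(t) := \int_{r}^{t}\overline{B}(s,\null_{n}Y)\,\null_{n}\dot{W}_{s}\,ds$. First I would fix $N\in\mathbb{N}$ and localize at the stopping time $\tau_{N} := \inf\{t\in [r,T]\,|\,|\null_{n}Y_{t}|\geq N\}\wedge T$, so that $\|\null_{n}Y^{\cdot\wedge\tau_{N}}\|_{\infty}\leq \|\null_{n}Y^{r}\|_{\infty}\vee N$ and all moments appearing in the argument are a priori finite; bounds independent of $N$ will then be promoted by Fatou's lemma as $N\uparrow\infty$.

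Decomposing the increment $\null_{n}Y_{u}-\null_{n}Y_{s}$ into the four natural pieces $I_{1},I_{2},I_{3},I_{4}$ corresponding to the terms in~\eqref{General Sequence SDE}, the integrals $I_{1}$ and $I_{2}$ are handled by the sublinear growth $|\underline{B}|,|B_{H}|\leq c(1+\|\cdot\|_{\infty}^{\kappa})$ with $\kappa\in[0,1)$ together with Jensen's and Young's inequalities (to absorb the $\kappa$-power into a linear one) and, for $I_{2}$, the Cauchy--Schwarz inequality against $\dot{h}\in L^{2}$; the genuine It\^o integral $I_{4}$ is controlled by the Burkholder--Davis--Gundy inequality and $|\Sigma|\leq c$. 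Each of these contributes at most $C(t-s)^{p/2}\bigl(1+\int_{s}^{t}E[\|\null_{n}Y^{v\wedge\tau_{N}}\|_{\infty}^{p}]\,dv\bigr)$ to $E[\sup_{u\in[s,t]}|I_{j}(u)-I_{j}(s)|^{p}]$.

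The nontrivial term is $I_{3}$, which I would split as
\[
I_{3}(u) \;=\; \int_{r}^{u}\overline{B}(\underline{v}_{n},\null_{n}Y)\,\null_{n}\dot{W}_{v}\,dv \;+\; \int_{r}^{u}\bigl[\overline{B}(v,\null_{n}Y)-\overline{B}(\underline{v}_{n},\null_{n}Y)\bigr]\null_{n}\dot{W}_{v}\,dv \;=:\; I_{3}^{a}(u)+I_{3}^{b}(u).
\]
The integrand of $I_{3}^{a}$ is $\mathscr{F}_{\underline{v}_{n}}$-measurable by non-anticipativity, so Proposition~\ref{Adapted Linear Interpolation Estimation} applied with $X_{v}=\overline{B}(v,\null_{n}Y)$ together with $|\overline{B}|\leq c$ gives directly $E[\sup_{u\in[s,t]}|I_{3}^{a}(u)-I_{3}^{a}(s)|^{p}]\leq \hat{w}_{p}c^{p}(t-s)^{p/2}$. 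For $I_{3}^{b}$ the $d_{\infty}$-Lipschitz continuity of $\overline{B}$ yields $|\overline{B}(v,\cdot)-\overline{B}(\underline{v}_{n},\cdot)|\leq \lambda(|v-\underline{v}_{n}|^{1/2}+\|\null_{n}Y^{v}-\null_{n}Y^{\underline{v}_{n}}\|_{\infty})$; the time-part combines with $|v-\underline{v}_{n}|\leq c_{\mathbb{T}}|\mathbb{T}_{n}|$ and Lemma~\ref{Brownian Interpolation Integral Moments Lemma} at $q=1$ to produce an $n$-uniform contribution of order $(t-s)^{p/2}$.

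The main obstacle is the path-oscillation piece of $I_{3}^{b}$, namely $\int_{s}^{t}\|\null_{n}Y^{v}-\null_{n}Y^{\underline{v}_{n}}\|_{\infty}|\null_{n}\dot{W}_{v}|\,dv$: the naive estimate $\|\null_{n}Y^{v}-\null_{n}Y^{\underline{v}_{n}}\|_{\infty}\leq 2\|\null_{n}Y\|_{\infty}$ fails because Cauchy--Schwarz against $\int|\null_{n}\dot{W}|^{2}\,dv$ introduces, via Lemma~\ref{Brownian Interpolation Integral Moments Lemma} at $q=2$, an uncontrolled factor $|\mathbb{T}_{n}|^{-p/2}$. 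My plan to defeat this is a two-scale coupling: first derive the provisional partition-scale estimate
\[
E[\|\null_{n}Y^{v}-\null_{n}Y^{\underline{v}_{n}}\|_{\infty}^{p}] \;\leq\; C\,|\mathbb{T}_{n}|^{p/2}\bigl(1+E[\|\null_{n}Y^{\cdot\wedge\tau_{N}}\|_{\infty}^{p}]\bigr)
\]
by running the same SDE decomposition over the short interval $[\underline{v}_{n},v]$ of length at most $c_{\mathbb{T}}|\mathbb{T}_{n}|$ (the gain $|\mathbb{T}_{n}|^{p/2}$ coming from the Brownian-type scaling of $I_{4}$ and from Proposition~\ref{Adapted Linear Interpolation Estimation} applied to $I_{3}^{a}$ on this short interval, while the $I_{3}^{b}$-contribution there is handled trivially via $|\overline{B}|\leq c$ and Lemma~\ref{Brownian Interpolation Integral Moments Lemma}). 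Inserting this provisional bound into the Cauchy--Schwarz estimate for the oscillation integral, the $|\mathbb{T}_{n}|^{p/2}$ cancels against the $|\mathbb{T}_{n}|^{-p/2}$ from Lemma~\ref{Brownian Interpolation Integral Moments Lemma} ($q=2$), yielding the required $(t-s)^{p/2}$ bound for $I_{3}^{b}$. Collecting all estimates, taking $s=r$ and applying Gronwall's lemma to $\psi_{N}(t):=E[\|\null_{n}Y^{t\wedge\tau_{N}}\|_{\infty}^{p}]$ gives the sup-norm bound uniformly in $N$ and $n$; passing $N\uparrow\infty$ via Fatou and then re-substituting the uniform moment bound into the increment estimate delivers the H\"older-$1/2$ half of~\eqref{General Convergence Inequality 1}.
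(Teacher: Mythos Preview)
Your strategy mirrors the paper's proof closely: localize, decompose the increment into $I_{1},\dots,I_{4}$, split $I_{3}=I_{3}^{a}+I_{3}^{b}$ with $I_{3}^{a}$ handled by Proposition~\ref{Adapted Linear Interpolation Estimation}, control $I_{3}^{b}$ through a provisional short-interval increment estimate combined with Lemma~\ref{Brownian Interpolation Integral Moments Lemma}, then Gronwall and Fatou. The architecture is correct and matches the paper.

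The gap is at ``inserting this provisional bound into the Cauchy--Schwarz estimate''. Your provisional bound is stated for the $p$-th moment, but separating the expectation of the product $\|\null_{n}Y^{v}-\null_{n}Y^{\underline{v}_{n}}\|_{\infty}^{p}\cdot\bigl(\int_{s}^{t}|\null_{n}\dot{W}|^{2}\bigr)^{p/2}$ by Cauchy--Schwarz requires the provisional estimate at exponent $2p$, which (after your Young-type absorption of the $\kappa$-power) yields a factor $(1+E[\|\null_{n}Y^{\cdot\wedge\tau_{N}}\|_{\infty}^{2p}])^{1/2}$ on the Gronwall right-hand side. This cannot be bounded by $1+\psi_{N}(u)$, so the inequality for $\psi_{N}(t)=E[\|\null_{n}Y^{t\wedge\tau_{N}}\|_{\infty}^{p}]$ does not close. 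The paper's remedy is to exploit $\kappa<1$ once more at precisely this step: it applies H\"older with the asymmetric exponents $1/\kappa$ and $1/(1-\kappa)$ instead of $(2,2)$, and runs the provisional short-interval estimate at exponent $p/\kappa$. Because $|\underline{B}|,|B_{H}|\leq c(1+\|\cdot\|_{\infty}^{\kappa})$, the drift contributions to the $(p/\kappa)$-th moment on the short interval involve only $E[\|\null_{n}Y^{u\wedge\tau}\|_{\infty}^{\kappa\cdot p/\kappa}]=E[\|\null_{n}Y^{u\wedge\tau}\|_{\infty}^{p}]$, so the oscillation term becomes $C(t-s)^{p/2-1}\int_{s}^{t}(1+\psi_{N}(u))^{\kappa}\,du$ and Gronwall closes. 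Your two-scale cancellation of $|\mathbb{T}_{n}|^{\pm p/2}$ is exactly the right idea; what keeps the moment order at $p$ is the choice of H\"older exponents tied to $\kappa$.
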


\begin{proof}
We may assume that $E[\|\null_{n}Y^{r}\|_{\infty}^{p}]$ is finite and the constant $\kappa$ appearing in~\eqref{C.6} is positive. For $l\in\mathbb{N}$ the stopping time $\tau_{l,n}:=\inf\{t\in [0,T]\,|\, |\null_{n}Y_{t}|\geq l\}\vee r$ satisfies $\|\null_{n}Y^{\tau_{l,n}}\|_{\infty}\leq \|\null_{n}Y^{r}\|_{\infty}\vee l$ and for $s,t\in [r,T]$ with $s\leq t$ we have
\begin{equation}\label{Hoelder Main Estimation}
\begin{split}
\big(E[\|\null_{n}Y^{s\wedge\tau_{l,n}} - &\null_{n}Y^{t\wedge\tau_{l,n}}\|_{\infty}^{p}]\big)^{1/p}\\
&\leq \bigg(\overline{c}_{p}(t-s)^{p/2-1}\int_{s}^{t} 1 + E\big[\|\null_{n}Y^{u\wedge\tau_{l,n}}\|_{\infty}^{\kappa p}\big]\,du\bigg)^{1/p}\\
&\quad + \bigg(E\bigg[\sup_{v\in [s,t]}\bigg|\int_{s}^{v\wedge\tau_{l,n}}\overline{B}(u,\null_{n}Y)\,d\null_{n}W_{u}\bigg|^{p}\bigg]\bigg)^{1/p},
\end{split} 
\end{equation}
where $\overline{c}_{p}:=6^{p-1}((T-r)^{p/2} +\|h\|_{H,r}^{p} + w_{p})c^{p}$ and $w_{p}$ is the constant in~\eqref{Mao's Inequality}. Lemma~\ref{Brownian Interpolation Integral Moments Lemma} provides the constant $\hat{w}_{p/\kappa,1}$ such that \eqref{Brownian Interpolation Constant} holds when $p$ and $q$ are replaced by $p/\kappa$ and $1$, respectively. So,
\[
\bigg(E\bigg[\bigg(\int_{\underline{u}_{n}}^{u\wedge\tau_{l,n}}|\overline{B}(v,\null_{n}Y)\null_{n}\dot{W}_{v}|\,dv\bigg)^{p/\kappa}\bigg]\bigg)^{\kappa}\leq c_{p,1} (u-\underline{u}_{n})^{p/2}
\]
for any given $u\in [s,T]$ and $c_{p,1}:=2^{p/2}\hat{w}_{p/\kappa,1}^{\kappa}c^{p}$. By virtue of~\eqref{Hoelder Main Estimation}, we may define $\overline{c}_{p/\kappa}$ just as $\overline{c}_{p}$ above when $p$ is replaced by $p/\kappa$ and obtain that
\[
\big(E\big[\|\null_{n}Y^{u\wedge\tau_{l,n}} - \null_{n}Y^{\underline{u}_{n}\wedge\tau_{l,n}}\|_{\infty}^{p/\kappa}\big]\big)^{\kappa}\leq c_{p,2}(u-\underline{u}_{n})^{p/2}\big(1 + E\big[\|\null_{n}Y^{u\wedge\tau_{l,n}}\|_{\infty}^{p}\big]\big)^{\kappa}
\]
with $c_{p,2}:= 2^{p-1}(\overline{c}_{p/\kappa}^{\kappa} + c_{p,1})$. Consequently, by letting $\lambda\geq 0$ denote a Lipschitz constant for $\overline{B}$ and using \holders inequality, we can estimate that
\begin{align*}
&E\bigg[\bigg(\int_{s}^{t\wedge\tau_{l,n}}\big|\big(\overline{B}(u,\null_{n}Y) - \overline{B}(\underline{u}_{n},\null_{n}Y)\big)\null_{n}\dot{W}_{u}\big|\,du\bigg)^{p}\bigg]\\
&\leq (t-s)^{p/2-1}\int_{s}^{t}E\bigg[|\overline{B}(u,\null_{n}Y^{\tau_{l,n}}) - \overline{B}(\underline{u}_{n},\null_{n}Y^{\tau_{l,n}})|^{p}\bigg(\int_{s}^{t}|\null_{n}\dot{W}_{v}|^{2}\,dv\bigg)^{p/2}\bigg]\,du\\
&\leq c_{p,3}(t-s)^{p/2-1}\int_{s}^{t}\big(1 + E[\|\null_{n}Y^{u\wedge\tau_{l,n}}\|_{\infty}^{p}]\big)^{\kappa}\,du
\end{align*}
for $c_{p,3}:=2^{3p/2-1}\hat{w}_{(p/2)/(1-\kappa),2}^{1-\kappa}(T-r)^{p/2}\lambda^{p}(1 + c_{p,2})$. In addition, Proposition~\ref{Adapted Linear Interpolation Estimation} directly yields that
\[
E\bigg[\sup_{v\in [s,t]}\bigg|\int_{s}^{v\wedge\tau_{l,n}}\overline{B}(\underline{u}_{n},\null_{n}Y)\,d\null_{n}W_{u}\bigg|^{p}\bigg]\leq \hat{w}_{p} c^{p}(t-s)^{p/2}.
\]
Hence, we set $c_{p,4}:=3^{p-1}(2\overline{c}_{p} + c_{p,3} + \hat{w}_{p}c^{p})$, then from~\eqref{Hoelder Main Estimation} we in total obtain that
\begin{equation}\label{Hoelder Main Estimation 2}
E[\|\null_{n}Y^{s\wedge\tau_{l,n}} - \null_{n}Y^{t\wedge\tau_{l,n}}\|_{\infty}^{p}] \leq c_{p,4}(t-s)^{p/2-1}\int_{s}^{t} 1 + E[\|\null_{n}Y^{u\wedge\tau_{l,n}}\|_{\infty}^{p}]\,du.
\end{equation}
Now Gronwall's inequality and Fatou's lemma entail that
\[
E[\|\null_{n}Y^{t}\|_{\infty}^{p}]\leq\liminf_{l\uparrow\infty} E[\|\null_{n}Y^{t\wedge\tau_{l,n}}\|_{\infty}^{p}]\leq c_{p,5}\big(1 + E[\|\null_{n}Y^{r}\|_{\infty}^{p}]\big),
\]
where $c_{p,5}:=2^{p-1}\max\{1,T-r\}^{p/2}\max\{1,c_{p,4}\}e^{2^{p-1}(T-r)^{p/2}c_{p,4}}$. Thus, if we set $c_{p}:=(1 + c_{p,4})(1 + c_{p,5})$, then the claim follows from~\eqref{Hoelder Main Estimation 2} and another application of Fatou's lemma.
\end{proof}

\begin{corollary}\label{General Convergence Corollary 2}
Let~\eqref{C.6} hold and $h\in H_{r}^{1}([0,T],\mathbb{R}^{d})$. Then for each $p\geq 2$ there is $c_{p} > 0$ such that any solution $Y$ to~\eqref{General Limit SDE} satisfies
\begin{equation}\label{General Convergence Inequality 2}
E[\|Y\|_{\infty}^{p}] + E[\|Y^{s} - Y^{t}\|_{\infty}^{p}]/|s-t|^{p/2}\leq c_{p}\big(1 + E[\|Y^{r}\|_{\infty}^{p}]\big)
\end{equation}
for every $s,t\in [r,T]$ with $s\neq t$.
\end{corollary}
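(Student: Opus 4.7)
The argument closely parallels that of Proposition~\ref{General Convergence Proposition 1}, but is in fact simpler since~\eqref{General Limit SDE} involves only the genuine Brownian integral $\int\cdot\,dW$, so Mao's inequality~\eqref{Mao's Inequality} can be applied directly and the delicate bounds of Lemma~\ref{Brownian Interpolation Integral Moments Lemma} and Proposition~\ref{Adapted Linear Interpolation Estimation} are not needed. I may assume $E[\|Y^{r}\|_{\infty}^{p}]<\infty$ and localize via the stopping times $\tau_{l}:=\inf\{t\in[0,T]\,|\,|Y_{t}|\geq l\}\vee r$, so that $\|Y^{\tau_{l}}\|_{\infty}\leq\|Y^{r}\|_{\infty}\vee l$. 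Under~\eqref{C.6}, both $\overline{B}+\Sigma$ and $R$ (given by~\eqref{General Remainder Term}) are uniformly bounded on $[r,T)\times C([0,T],\mathbb{R}^{m})$, since $\partial_{x}\overline{B}_{k,l}$, $\overline{B}$ and $\Sigma$ are, while $|\underline{B}(t,x)|+|B_{H}(t,x)|\leq c(1+\|x\|_{\infty}^{\kappa})$.

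For $s,t\in[r,T]$ with $s\leq t$, I would decompose $Y_{t\wedge\tau_{l}}-Y_{s\wedge\tau_{l}}$ into the three integrals appearing in~\eqref{General Limit SDE} and estimate each separately: Jensen's inequality for the $(\underline{B}+R)$-integral, Cauchy-Schwarz for the $B_{H}\dot{h}$-integral using $\int_{r}^{T}|\dot{h}(u)|^{2}\,du\leq\|h\|_{H,r}^{2}$, and Mao's inequality~\eqref{Mao's Inequality} for the stochastic integral, which gives a bound of at most $w_{p}(2c)^{p}(t-s)^{p/2}$ thanks to $|\overline{B}+\Sigma|\leq 2c$. Combining these with the elementary inequalities $(t-s)^{p}\leq (T-r)^{p/2}(t-s)^{p/2}$ and $a^{\kappa p}\leq 1+a^{p}$ valid for $\kappa\in[0,1)$ and $a\geq 0$, one obtains
\begin{equation*}
E\bigl[\|Y^{s\wedge\tau_{l}}-Y^{t\wedge\tau_{l}}\|_{\infty}^{p}\bigr]\leq \tilde{c}_{p}(t-s)^{p/2-1}\int_{s}^{t}1+E\bigl[\|Y^{u\wedge\tau_{l}}\|_{\infty}^{p}\bigr]\,du
\end{equation*}
for a constant $\tilde{c}_{p}>0$ that depends only on $p$, $c$, $T-r$, $w_{p}$ and $\|h\|_{H,r}$, but is uniform in $l$.

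Taking $s=r$, using $\|Y^{t\wedge\tau_{l}}\|_{\infty}\leq\|Y^{r}\|_{\infty}+\|Y^{r}-Y^{t\wedge\tau_{l}}\|_{\infty}$, and applying Gronwall's inequality yields a bound $E[\|Y^{t\wedge\tau_{l}}\|_{\infty}^{p}]\leq c_{p,0}(1+E[\|Y^{r}\|_{\infty}^{p}])$ independent of $l$. Fatou's lemma then lifts this to $E[\|Y\|_{\infty}^{p}]\leq c_{p,0}(1+E[\|Y^{r}\|_{\infty}^{p}])$; substituting the result back into the increment estimate and applying Fatou's lemma once more gives the required bound on $E[\|Y^{s}-Y^{t}\|_{\infty}^{p}]/|s-t|^{p/2}$. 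Since this reproduces the scheme of Proposition~\ref{General Convergence Proposition 1} without the interpolation error term involving $\overline{B}(u,Y)-\overline{B}(\underline{u}_{n},Y)$, no genuine technical difficulty arises; the only obstacle is the routine bookkeeping of constants.
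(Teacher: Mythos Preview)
Your proposal is correct, but it takes a different and considerably longer route than the paper. You essentially reprove Proposition~\ref{General Convergence Proposition 1} from scratch in the special setting of~\eqref{General Limit SDE}, carrying out the localization, the increment estimate, Gronwall, and Fatou once more. The paper instead observes that~\eqref{General Limit SDE} is precisely a degenerate instance of~\eqref{General Sequence SDE}: if one replaces $\underline{B}$ by $\underline{B}+R$, sets $\overline{B}=0$, and replaces $\Sigma$ by $\overline{B}+\Sigma$, then the $\null_{n}\dot{W}$-term drops out and the sequence SDE collapses to the limit SDE, independently of $n$. Since $R$ is bounded under~\eqref{C.6} (as you yourself note), the substituted coefficients still satisfy~\eqref{C.6}, and the Lipschitz hypothesis on $\overline{B}$ in Proposition~\ref{General Convergence Proposition 1} is trivially met for $\overline{B}=0$. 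The corollary then follows in one line from Proposition~\ref{General Convergence Proposition 1}.

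What your approach buys is self-containedness and perhaps a slightly sharper constant, since you avoid the interpolation machinery entirely; what the paper's approach buys is economy---no new estimates are needed, and the corollary becomes an immediate consequence of the proposition already established.
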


\begin{proof}
Since the map $R$ defined via~\eqref{General Remainder Term} is bounded, we may apply Proposition~\ref{General Convergence Proposition 1} in the case that $\underline{B}$ is replaced by $\underline{B} + R$, $\overline{B}$ is replaced by $0$ and $\Sigma$ is replaced by $\overline{B} + \Sigma$. From this the claim follows immediately.
\end{proof}

With the derived moment estimates we deduce a crucial decomposition estimate that involves the linear operator $L_{n}$ and the function $\gamma_{n}$ given at~\eqref{Delayed Linear Interpolation Operator} and~\eqref{Gamma Function}, respectively, where $n\in\mathbb{N}$.

\begin{proposition}\label{General Convergence Proposition 3}
Let~\eqref{C.6} and~\eqref{C.7} be satisfied and $h\in H_{r}^{1}([0,T],\mathbb{R}^{d})$. Then there is $c_{2} > 0$ such that for each $n\in\mathbb{N}$ and any solutions $\null_{n}Y$ and $Y$ to~\eqref{General Sequence SDE} and~\eqref{General Limit SDE}, respectively,
\begin{align*}
&E\big[\max_{j\in\{0,\dots,k_{n}\}}|\null_{n}Y_{t_{j,n}} - Y_{t_{j,n}}|^{2}\big]/c_{2}\leq |\mathbb{T}_{n}|\big(1 + E\big[\|\null_{n}Y^{r}\|_{\infty}^{2} + \|Y^{r}\|_{\infty}^{2}\big]\big)\\
&\quad + E\big[\|\null_{n}Y^{r} - Y^{r}\|_{\infty}^{2} + \|L_{n}(\null_{n}Y) - \null_{n}Y\|_{\infty}^{2} + \|L_{n}(Y) - Y\|_{\infty}^{2}\big]\\
&\quad + E\bigg[\max_{j\in\{0,\dots,k_{n}\}}\bigg|\int_{r}^{t_{j,n}}R(\underline{s}_{n},\null_{n}Y)(\gamma_{n}(s) - 1)\,ds\bigg|^{2}\bigg]\\
&\quad + E\bigg[\max_{j\in\{0,\dots,k_{n}\}}\bigg|\int_{r}^{t_{j,n}}\overline{B}(\underline{s}_{n},\null_{n}Y)\,d(\null_{n}W_{s} - W_{s})\bigg|^{2}\bigg]\\
&\quad + E\bigg[\max_{j\in\{0,\dots,k_{n}\}}\bigg|\int_{r}^{t_{j,n}}\big(\overline{B}(s,\null_{n}Y)-\overline{B}(\underline{s}_{n},\null_{n}Y)\big)\null_{n}\dot{W}_{s} - R(\underline{s}_{n},\null_{n}Y)\gamma_{n}(s)\,ds\bigg|^{2}\bigg].
\end{align*}
\end{proposition}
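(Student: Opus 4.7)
My plan is to expand the difference $\null_{n}Y_{t_{j,n}} - Y_{t_{j,n}}$ using the two SDEs and then algebraically rearrange so that the five remainder terms prescribed by the statement emerge cleanly. Subtracting \eqref{General Limit SDE} from \eqref{General Sequence SDE} integrated from $r$ to $t_{j,n}$ gives
\begin{align*}
\null_{n}Y_{t_{j,n}} - Y_{t_{j,n}} &= (\null_{n}Y_{r} - Y_{r}) + \int_{r}^{t_{j,n}}[\underline{B}(s,\null_{n}Y)-\underline{B}(s,Y)]\,ds\\
&\quad + \int_{r}^{t_{j,n}}[B_{H}(s,\null_{n}Y)-B_{H}(s,Y)]\dot{h}(s)\,ds\\
&\quad + \int_{r}^{t_{j,n}}[\Sigma(s,\null_{n}Y)-\Sigma(s,Y)]\,dW_{s} + \mathcal{I}_{n}(t_{j,n}) - \int_{r}^{t_{j,n}}R(s,Y)\,ds,
\end{align*}
where $\mathcal{I}_{n}(t) := \int_{r}^{t}\overline{B}(s,\null_{n}Y)\,\null_{n}\dot{W}_{s}\,ds - \int_{r}^{t}\overline{B}(s,Y)\,dW_{s}$ carries the entire Wong--Zakai mismatch between the smooth driver $\null_{n}W$ and the Brownian driver $W$. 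Every term outside $\mathcal{I}_{n}$ involves a difference of $d_{\infty}$-Lipschitz coefficients and will ultimately feed into a discrete Gronwall argument.

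To extract the correction $R$ in precisely the form prescribed by the statement, I freeze $\overline{B}$ at the left endpoint $\underline{s}_{n}$ of each partition interval and insert $\pm R(\underline{s}_{n},\null_{n}Y)\gamma_{n}(s)$. This yields
\begin{align*}
\mathcal{I}_{n}(t) &= \int_{r}^{t}\!\{[\overline{B}(s,\null_{n}Y)-\overline{B}(\underline{s}_{n},\null_{n}Y)]\,\null_{n}\dot{W}_{s} - R(\underline{s}_{n},\null_{n}Y)\gamma_{n}(s)\}\,ds\\
&\quad + \int_{r}^{t}\overline{B}(\underline{s}_{n},\null_{n}Y)\,d(\null_{n}W_{s}-W_{s}) + \int_{r}^{t}[\overline{B}(\underline{s}_{n},\null_{n}Y)-\overline{B}(s,Y)]\,dW_{s}\\
&\quad + \int_{r}^{t}R(\underline{s}_{n},\null_{n}Y)\gamma_{n}(s)\,ds.
\end{align*}
The first line is precisely the fifth integrand in the statement, the first integral of the second line is the fourth integrand, and combining the last line with $-\int_{r}^{t_{j,n}}R(s,Y)\,ds$ from the original decomposition splits into $\int R(\underline{s}_{n},\null_{n}Y)(\gamma_{n}(s)-1)\,ds$, i.e.\ the third integrand, plus a residual $\int[R(\underline{s}_{n},\null_{n}Y)-R(s,Y)]\,ds$, which together with the middle $dW$-integral is a $d_{\infty}$-Lipschitz remainder.

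To close the estimate I would bound every $d_{\infty}$-Lipschitz piece by a constant times $|s-\underline{s}_{n}|^{1/2}+\|\null_{n}Y^{s}-Y^{s}\|_{\infty}$, invoke Proposition~\ref{General Convergence Proposition 1} and Corollary~\ref{General Convergence Corollary 2} to absorb the $|\mathbb{T}_{n}|^{1/2}$-pieces into the $|\mathbb{T}_{n}|(1+E[\|\null_{n}Y^{r}\|_{\infty}^{2}+\|Y^{r}\|_{\infty}^{2}])$ prefactor in the statement, and for the supremum difference use
\[
\|\null_{n}Y^{s}-Y^{s}\|_{\infty} \leq \|L_{n}(\null_{n}Y)-\null_{n}Y\|_{\infty} + \|L_{n}(Y)-Y\|_{\infty} + \|L_{n}(\null_{n}Y)^{s}-L_{n}(Y)^{s}\|_{\infty},
\]
which is the source of the interpolation-error terms in the statement. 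Since $L_{n}$ agrees with the path at partition points and is piecewise linear in between, for $s\leq t_{j,n}$ the last summand is controlled by $\|\null_{n}Y^{r}-Y^{r}\|_{\infty}+\max_{i\leq j}|\null_{n}Y_{t_{i,n}}-Y_{t_{i,n}}|$, and this feeds into a discrete Gronwall inequality applied to $j\mapsto E[\max_{i\leq j}|\null_{n}Y_{t_{i,n}}-Y_{t_{i,n}}|^{2}]$. The stochastic integrals are handled via Doob and BDG, with the $\max$ over $j$ pulled outside the square before taking expectations.

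The main obstacle is the fifth integrand itself: a naive Lipschitz bound on $\overline{B}(s,\cdot)-\overline{B}(\underline{s}_{n},\cdot)$ of size $|\mathbb{T}_{n}|^{1/2}$ multiplied by $|\null_{n}\dot{W}_{s}|\sim |\mathbb{T}_{n}|^{-1/2}$ only delivers an $O(1)$ bound on the first summand of $\mathcal{I}_{n}$, so no pointwise estimate on $\overline{B}$ can close the argument on its own. Inserting $R(\underline{s}_{n},\null_{n}Y)\gamma_{n}(s)$ is precisely the Stratonovich/Wong--Zakai correction that should make this difference small; demonstrating such smallness requires a separate functional It\^o expansion performed elsewhere in the paper. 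The role of the present proposition is purely to \emph{isolate} the three distinguished remainders so that each can be estimated by the dedicated tools of Section~\ref{Auxiliary convergence results}.
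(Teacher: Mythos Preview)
Your approach is essentially the paper's: the decomposition of $\mathcal{I}_{n}$ into the three distinguished remainders plus the two $d_{\infty}$-Lipschitz residuals $\int[R(\underline{s}_{n},\null_{n}Y)-R(s,Y)]\,ds$ and $\int[\overline{B}(\underline{s}_{n},\null_{n}Y)-\overline{B}(s,Y)]\,dW_{s}$ is exactly what the paper does (there written as $\null_{n,3}\Delta+\null_{n,4}\Delta+\null_{n,5}\Delta$ plus two integrals), and the closure via interpolation and Gronwall is the same. One small correction: $L_{n}$ does \emph{not} agree with the path at partition points---by~\eqref{Delayed Linear Interpolation Operator} one has $L_{n}(x)(t_{i,n})=x(t_{i-1,n})$---but your claimed bound $\|L_{n}(\null_{n}Y-Y)^{s}\|_{\infty}\leq\|\null_{n}Y^{r}-Y^{r}\|_{\infty}\vee\max_{i:t_{i,n}<s}|\null_{n}Y_{t_{i,n}}-Y_{t_{i,n}}|$ still holds, precisely because of this delay (Lemma~\ref{Delayed Linear Interpolation Lemma}); the paper additionally routes through $\underline{s}_{n}$ first, picking up an increment term $\varepsilon_{n}(s)=E[\|\null_{n}Y^{s}-\null_{n}Y^{\underline{s}_{n}}\|_{\infty}^{2}+\|Y^{s}-Y^{\underline{s}_{n}}\|_{\infty}^{2}]$ that is absorbed into the $|\mathbb{T}_{n}|$ prefactor via Proposition~\ref{General Convergence Proposition 1} and Corollary~\ref{General Convergence Corollary 2}.
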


\begin{proof}
Let $E[\|\null_{n}Y^{r}\|_{\infty}^{2}]$ and $E[\|Y^{r}\|_{\infty}^{2}]$ be finite. We define an increasing function $\varphi_{n}:[r,T]\rightarrow\mathbb{R}_{+}$ by
\[
\varphi_{n}(t) := E\big[\max_{j\in\{0,\dots,k_{n}\}:\, t_{j,n}\leq t} |\null_{n}Y_{t_{j,n}} - Y_{t_{j,n}}|^{2}\big] 
\]
and seek to apply Gronwall's inequality. For this purpose, we write the difference of $\null_{n}Y$ and $Y$ in the form
\begin{align*}
\null_{n}Y_{t} - Y_{t} &= \null_{n}Y_{r} - Y_{r} +  \int_{r}^{t}\underline{B}(s,\null_{n}Y) - \underline{B}(s,Y)+ \big(B_{H}(s,\null_{n}Y) - B_{H}(s,Y)\big)\dot{h}(s)\,ds\\
&\quad + \null_{n}\Delta_{t} + \int_{r}^{t}\Sigma(s,\null_{n}Y) - \Sigma(s,Y)\,dW_{s}
\end{align*}
for all $t\in [r,T]$ a.s., where the process $\null_{n}\Delta\in\mathscr{C}([0,T],\mathbb{R}^{m})$ is chosen such that
\begin{equation*}
\null_{n}\Delta_{t}=\int_{r}^{t}\overline{B}(s,\null_{n}Y)\null_{n}\dot{W}_{s} - R(s,Y)\,ds - \int_{r}^{t}\overline{B}(s,Y)\,dW_{s}
\end{equation*}
for each $t\in [r,T]$ a.s. Hence, let $\lambda\geq 0$ denote a Lipschitz constant for $\underline{B}(s,\cdot)$, $B_{H}$, $\overline{B}$, $R$ and $\Sigma$ for any $s\in [r,T)$, then we obtain that
\begin{equation}\label{Gronwall Estimation I}
\varphi_{n}(t)^{1/2}\leq \delta_{n,1}^{1/2} + \delta_{n}(t)^{1/2} + \bigg(c_{2,1}\int_{r}^{t_{n}}\delta_{n,1} + \delta_{n,2}(s) +\varepsilon_{n}(s) + \varphi_{n}(s)\,ds\bigg)^{1/2}
\end{equation}
for every $t\in [r,T]$, where we have set $c_{2,1} := 15(T-r + \|h\|_{H,r}^{2} + w_{2})\lambda^{2}$ and $\delta_{n,1}:=E[\|\null_{n}Y^{r} - Y^{r}\|_{\infty}^{2}]$ and the functions $\delta_{n},\delta_{n,2},\varepsilon_{n}:[r,T]\rightarrow\mathbb{R}_{+}$, which are readily seen to be measurable, are defined via
\begin{align*}
\delta_{n}(t)&:=E\big[\max_{j\in\{0,\dots,k_{n}\}:\,t_{j,n}\leq t} |\null_{n}\Delta_{t_{j,n}}|^{2}\big],\\
\delta_{n,2}(s)&:=E\big[\|L_{n}(\null_{n}Y)^{\underline{s}_{n}}- \null_{n}Y^{\underline{s}_{n}}\|_{\infty}^{2} + \|L_{n}(Y)^{\underline{s}_{n}} - Y^{\underline{s}_{n}}\|_{\infty}^{2}\big]\quad\text{and}\\
\varepsilon_{n}(s) &:= E\big[\|\null_{n}Y^{s} - \null_{n}Y^{\underline{s}_{n}}\|_{\infty}^{2} + \|Y^{s} - Y^{\underline{s}_{n}}\|_{\infty}^{2}\big].
\end{align*}
In deriving~\eqref{Gronwall Estimation I} we used that $E[\|L_{n}(\null_{n}Y)^{\underline{s}_{n}} - L_{n}(Y)^{\underline{s}_{n}}\|_{\infty}^{2}]\leq \delta_{n,1} + \varphi_{n}(s)$ for all $s\in [r,T]$, which follows from Lemma~\ref{Delayed Linear Interpolation Lemma}, since $L_{n}$ is linear. 

To estimate $\delta_{n}$ we introduce three processes $\null_{n,3}\Delta,\null_{n,4}\Delta,\null_{n,5}\Delta\in\mathscr{C}([0,T],\mathbb{R}^{m})$ by setting $\null_{n,3}\Delta_{t}:=\int_{r}^{t}R(\underline{s}_{n},\null_{n}Y)(\gamma_{n}(s)-1)\,ds$ and
\begin{equation*}
\null_{n,5}\Delta_{t}:=\int_{r}^{t}\big(\overline{B}(s,\null_{n}Y) - \overline{B}(\underline{s}_{n},\null_{n}Y)\big)\null_{n}\dot{W}_{s} - R(\underline{s}_{n},\null_{n}Y)\gamma_{n}(s)\,ds
\end{equation*}
and requiring that $\null_{n,4}\Delta_{t} =\int_{r}^{t}\overline{B}(\underline{s}_{n},\null_{n}Y)\,d(\null_{n}W_{s} - W_{s})$ for each $t\in [r,T]$ a.s. Then $\null_{n}\Delta$ can be rewritten in the following way:
\begin{align*}
\null_{n}\Delta_{t}& = \null_{n,3}\Delta_{t} + \null_{n,4}\Delta_{t} + \null_{n,5}\Delta_{t} + \int_{r}^{t}R(\underline{s}_{n},\null_{n}Y) - R(s,Y)\,ds\\
&\quad + \int_{r}^{t}\overline{B}(\underline{s}_{n},\null_{n}Y) - \overline{B}(s,Y)\,dW_{s}
\end{align*}
for all $t\in [r,T]$ a.s. Thus, we set $c_{2,2}:= 10(T-r + w_{2})\lambda^{2}$, then it follows readily that
\begin{equation}\label{Gronwall Estimation II}
\begin{split}
\delta_{n}(t)^{1/2}&\leq \delta_{n,3}(t)^{1/2} + \delta_{n,4}(t)^{1/2} + \delta_{n,5}(t)^{1/2}\\
&\quad + \bigg(c_{2,2}\int_{r}^{t_{n}}\delta_{n,1} + (s-\underline{s}_{n}) + \delta_{n,2}(s) + \varepsilon_{n}(s)+ \varphi_{n}(s)\,ds\bigg)^{1/2}
\end{split}
\end{equation}
for each $t\in [r,T]$, where the increasing function $\delta_{n,i}:[r,T]\rightarrow\mathbb{R}_{+}$ is given by
\[
\delta_{n,i}(t):=E\big[\max_{j\in\{0,\dots,k_{n}\}:\,t_{j,n}\leq t}|\null_{n,i}\Delta_{t_{j,n}}|^{2}\big],\quad\text{for all $i\in\{3,4,5\}$.}
\]

Proposition~\ref{General Convergence Proposition 1} and Corollary~\ref{General Convergence Corollary 2} give two constants $\underline{c}_{2},\overline{c}_{2} > 0$ satisfying \eqref{General Convergence Inequality 1} and~\eqref{General Convergence Inequality 2} for $p=2$ when $c_{p}$ is replaced by $\underline{c}_{2}$ and $\overline{c}_{2}$, respectively. Thus, putting \eqref{Gronwall Estimation I} and~\eqref{Gronwall Estimation II} together, we find that
\begin{align*}
\varphi_{n}(t) &\leq c_{2,4}|\mathbb{T}_{n}|\big(1 + E\big[\|\null_{n}Y^{r}\|_{\infty}^{2} + \|Y^{r}\|_{\infty}^{2}\big]\big) + (5 + c_{2,3}(T-r))\delta_{n,1}\\
&\quad + 5\big(\delta_{n,3}(t) + \delta_{n,4}(t) + \delta_{n,5}(t)\big) + c_{2,3}\int_{r}^{t_{n}}\delta_{n,2}(s) + \varphi_{n}(s)\,ds
\end{align*}
for given $t\in [r,T]$, where we have at first set $c_{2,3}:=10(c_{2,1}+c_{2,2})$ and then $c_{2,4}:=2(T-r)(1 + \underline{c}_{2} + \overline{c}_{2})c_{2,3}$. Consequently,
\begin{align*}
\varphi_{n}(t)/c_{2}\leq |\mathbb{T}_{n}|\big(1 + E\big[\|\null_{n}Y^{r}\|_{\infty}^{2} + \|Y^{r}\|_{\infty}^{2}\big]\big) + \delta_{n,1} + \sum_{i=2}^{5}\delta_{n,i}(t)
\end{align*}
with $c_{2}:=e^{(T-r)c_{2,3}}(5 + c_{2,4})$, by Gronwall's inequality. This yields the claim.
\end{proof}

Thanks to Lemmas~\ref{Auxiliary Convergence Result 2} and~\ref{Auxiliary Convergence Result 3} and Proposition~\ref{Auxiliary Convergence Result 4}, only the last remainder in the estimation of Proposition~\ref{General Convergence Proposition 3} requires further analysis, before we can prove~\eqref{General SDE Partition Limit Equation}. For this reason, we define a map $\Phi_{h,n}:[r,T]\times C([0,T],\mathbb{R}^{m})\times C([0,T],\mathbb{R}^{d})\rightarrow\mathbb{R}^{m}$ by
{\begin{align*}
\Phi_{h,n}(s,y,w)&:= B_{H}(\underline{s}_{n},y)(h(s) - h(\underline{s}_{n})) + \overline{B}(\underline{s}_{n},y)\big(L_{n}(w)(s) - L_{n}(w)(\underline{s}_{n})\big)\\
&\quad + \Sigma(\underline{s}_{n},y)(w(s) - w(\underline{s}_{n}))
\end{align*}
for given $h\in H_{r}^{1}([0,T],\mathbb{R}^{d})$ and $n\in\mathbb{N}$. If now $\null_{n}Y$ is a solution to~\eqref{General Sequence SDE}, then the following decomposition can be used to deal with the remainder in question:
\begin{equation}\label{Main Remainder Decomposition}
\begin{split}
&\big(\overline{B}(s,\null_{n}Y) - \overline{B}(\underline{s}_{n},\null_{n}Y)\big)\null_{n}\dot{W}_{s} - R(\underline{s}_{n},\null_{n}Y)\gamma_{n}(s)\\
 &= \big(\overline{B}(s,\null_{n}Y) - \overline{B}(\underline{s}_{n},\null_{n}Y) - \partial_{x}\overline{B}(\underline{s}_{n},\null_{n}Y)(\null_{n}Y_{s} - \null_{n}Y_{\underline{s}_{n}})\big)\null_{n}\dot{W}_{s}\\
&\quad + \partial_{x}\overline{B}(\underline{s}_{n},\null_{n}Y)(\null_{n}Y_{s} - \null_{n}Y_{\underline{s}_{n}} -\Phi_{h,n}(s,\null_{n}Y,W))\null_{n}\dot{W}_{s}\\
&\quad + \partial_{x}\overline{B}(\underline{s}_{n},\null_{n}Y)\Phi_{h,n}(s,\null_{n}Y,W)\null_{n}\dot{W}_{s} - R(\underline{s}_{n},\null_{n}Y)\gamma_{n}(s)
\end{split}
\end{equation}
for any $s\in [r,T)$. In fact, in the next two sections we will consider each of these three terms to ensure that~\eqref{General SDE Partition Limit Equation} follows.

\subsection{Convergence of the first two remainders}\label{Convergence of the first two remainders}

To handle the first remainder term in~\eqref{Main Remainder Decomposition}, we will use the following estimation in combination with Lemma~\ref{Auxiliary Convergence Result 1}.

\begin{proposition}\label{Main Remainder Proposition 1}
Let~\eqref{C.6} be satisfied, $h\in H_{r}^{1}([0,T],\mathbb{R}^{d})$ and $F$ be a functional on $[r,T)\times C([0,T],\mathbb{R}^{m})$ of class $\mathbb{C}^{1,2}$. Assume that $\overline{B}$ and $\partial_{x}F$ are $d_{\infty}$-Lipschitz continuous and there are $c_{0},\eta\geq 0$ such that
\[
|\partial_{t}F(t,x)| + |\partial_{xx}F(t,x)|\leq c_{0}(1 + \|x\|_{\infty}^{\eta})
\]
for all $t\in [r,T)$ and $x\in C([0,T],\mathbb{R}^{m})$. Then for each $p\geq 2$ there exists $c_{p} > 0$ such that for each $n\in\mathbb{N}$ and any solution $\null_{n}Y$ to~\eqref{General Sequence SDE} it holds that
\begin{align*}
\sup_{s\in [r,T)}E\big[\big|F(s,\null_{n}Y) - F(\underline{s}_{n},\null_{n}Y) - &\partial_{x}F(\underline{s}_{n},\null_{n}Y)(\null_{n}Y_{s} - \null_{n}Y_{\underline{s}_{n}})\big|^{p}\big]\\
&\leq c_{p}|\mathbb{T}_{n}|^{p}\big(1 + E\big[\|\null_{n}Y^{r}\|_{\infty}^{(\eta\vee 2)p}\big]\big).
\end{align*}
\end{proposition}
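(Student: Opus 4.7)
The plan is to apply the functional It\^o formula \cite{ItoFormula} to $u\mapsto F(u,\null_n Y)$ on $[\underline s_n,s]$. By~\eqref{General Sequence SDE}, $\null_n Y$ is a continuous semimartingale with drift
$A_u:=\underline B(u,\null_n Y)+B_H(u,\null_n Y)\dot h(u)+\overline B(u,\null_n Y)\null_n\dot W_u$
and martingale part $\int\Sigma(u,\null_n Y)\,dW_u$; since $\null_n W$ has bounded variation, $d[\null_n Y]_u=\Sigma\Sigma'(u,\null_n Y)\,du$. Subtracting $\partial_x F(\underline s_n,\null_n Y)(\null_n Y_s-\null_n Y_{\underline s_n})=\int_{\underline s_n}^{s}\partial_x F(\underline s_n,\null_n Y)\,d\null_n Y_u$ from the It\^o expansion will split the remainder as $R_n(s)=I_1+I_2+I_3+I_4$ with
\begin{align*}
I_1&:=\int_{\underline s_n}^{s}\partial_t F(u,\null_n Y)\,du,\\
I_2&:=\tfrac{1}{2}\int_{\underline s_n}^{s}\mathrm{tr}\bigl(\partial_{xx}F(u,\null_n Y)\,\Sigma\Sigma'(u,\null_n Y)\bigr)\,du,\\
I_3&:=\int_{\underline s_n}^{s}\bigl(\partial_x F(u,\null_n Y)-\partial_x F(\underline s_n,\null_n Y)\bigr)\,A_u\,du,\\
I_4&:=\int_{\underline s_n}^{s}\bigl(\partial_x F(u,\null_n Y)-\partial_x F(\underline s_n,\null_n Y)\bigr)\Sigma(u,\null_n Y)\,dW_u.
\end{align*}

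I would then estimate each $I_i$ in $L^p$. The terms $I_1$ and $I_2$ are bounded pointwise by $c\,|\mathbb T_n|(1+\|\null_n Y\|_\infty^\eta)$ using the growth hypotheses on $\partial_t F$, $\partial_{xx}F$ and the bound $|\Sigma|\le c$ from~\eqref{C.6}, so Proposition~\ref{General Convergence Proposition 1} yields $E[(|I_1|+|I_2|)^p]\le c\,|\mathbb T_n|^p(1+E[\|\null_n Y^r\|_\infty^{\eta p}])$. For $I_4$, Mao's inequality~\eqref{Mao's Inequality}, the bound $|\Sigma|\le c$, the $d_\infty$-Lipschitz estimate
\begin{equation*}
|\partial_x F(u,\null_n Y)-\partial_x F(\underline s_n,\null_n Y)|\le\lambda\bigl(|u-\underline s_n|^{1/2}+\|\null_n Y^u-\null_n Y^{\underline s_n}\|_\infty\bigr),
\end{equation*}
and the H\"older-$1/2$ moment bound of Proposition~\ref{General Convergence Proposition 1} combine to give $E[|I_4|^p]\le c\,|\mathbb T_n|^p(1+E[\|\null_n Y^r\|_\infty^{2p}])$.

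The main obstacle will be $I_3$, specifically the piece coming from $\overline B(u,\null_n Y)\null_n\dot W_u$ in $A_u$, because $|\null_n\dot W_u|$ has typical size $|\mathbb T_n|^{-1/2}$. Using $|\overline B|\le c$ and the Lipschitz estimate above reduces the task to bounding the $p$-th moments of
\[
J_1:=\int_{\underline s_n}^{s}|u-\underline s_n|^{1/2}|\null_n\dot W_u|\,du\quad\text{and}\quad J_2:=\int_{\underline s_n}^{s}\|\null_n Y^u-\null_n Y^{\underline s_n}\|_\infty\,|\null_n\dot W_u|\,du.
\]
For $J_1$, pulling $|u-\underline s_n|^{1/2}\le|\mathbb T_n|^{1/2}$ outside and invoking Lemma~\ref{Brownian Interpolation Integral Moments Lemma} with $q=1$ gives $E[J_1^p]\le c\,|\mathbb T_n|^{p/2}\cdot|\mathbb T_n|^{-p/2}|\mathbb T_n|^p=c\,|\mathbb T_n|^p$. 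For $J_2$, monotonicity of $u\mapsto\|\null_n Y^u-\null_n Y^{\underline s_n}\|_\infty$ lets me extract this factor as the supremum $\|\null_n Y^s-\null_n Y^{\underline s_n}\|_\infty$ outside the integral; a Cauchy-Schwarz split then pairs the $2p$-th-moment H\"older-$1/2$ estimate of Proposition~\ref{General Convergence Proposition 1} with the $2p$-th-moment bound of Lemma~\ref{Brownian Interpolation Integral Moments Lemma} to supply two factors of $|\mathbb T_n|^{p/2}$ whose product is $|\mathbb T_n|^p$, up to a factor $(1+E[\|\null_n Y^r\|_\infty^{2p}])^{1/2}$ absorbed on the right since $\eta\vee 2\ge 2$. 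The $\underline B$ and $B_H\dot h$ contributions to $I_3$ are easier, in fact producing the smaller order $|\mathbb T_n|^{3p/2}$ by Jensen's inequality and the growth bound $|\underline B|+|B_H|\le c(1+\|\null_n Y\|_\infty^\kappa)$ with $\kappa<1$.
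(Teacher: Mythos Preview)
Your proposal is correct and follows essentially the same route as the paper: apply the functional It\^o formula on $[\underline{s}_n,s]$, subtract $\partial_xF(\underline{s}_n,\null_nY)(\null_nY_s-\null_nY_{\underline{s}_n})$, and estimate the resulting four pieces using Proposition~\ref{General Convergence Proposition 1}, Lemma~\ref{Brownian Interpolation Integral Moments Lemma} and~\eqref{Mao's Inequality}; the paper organizes the $I_3$ estimate by first bounding $(E[|\partial_xF(u,\null_nY)-\partial_xF(\underline{s}_n,\null_nY)|^{2p}])^{1/2}$ uniformly in $u$ rather than splitting into your $J_1,J_2$, but this is only a reordering. One small inaccuracy: the $B_H\dot h$ contribution to $I_3$ yields only order $|\mathbb{T}_n|^p$ (not $|\mathbb{T}_n|^{3p/2}$) because $\int_{\underline{s}_n}^s|\dot h|$ is controlled by $\|h\|_{H,r}$ via Cauchy--Schwarz rather than by $|\mathbb{T}_n|$, but this is already sufficient for the claim.
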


\begin{proof}
Let $\null_{n}\Delta:[\underline{s}_{n},s]\times\Omega\rightarrow\mathbb{R}^{1\times m}$ be defined by $\null_{n}\Delta_{u} := \partial_{x}F(u,\null_{n}Y) - \partial_{x}F(\underline{s}_{n},\null_{n}Y)$ for fixed $s\in [r,T)$, then the functional It{\^o} formula in~\cite{ItoFormula} yields that
\begin{equation}\label{Functional Ito Formula Decomposition}
\begin{split}
&F(s,\null_{n}Y) - F(\underline{s}_{n},\null_{n}Y) - \partial_{x}F(\underline{s}_{n},\null_{n}Y)(\null_{n}Y_{s} - \null_{n}Y_{\underline{s}_{n}})\\
&= \int_{\underline{s}_{n}}^{s}\partial_{u}F(u,\null_{n}Y) + \null_{n}\Delta_{u}\big(\underline{B}(u,\null_{n}Y) + B_{H}(u,\null_{n}Y)\dot{h}(u) + \overline{B}(u,\null_{n}Y)\null_{n}\dot{W}_{u}\big)\,du\\
&\quad + \int_{\underline{s}_{n}}^{s}\null_{n}\Delta_{u}\Sigma(u,\null_{n}Y)\,dW_{u} + \frac{1}{2}\int_{\underline{s}_{n}}^{s}\mathrm{tr}(\partial_{xx}F(u,\null_{n}Y)(\Sigma\Sigma')(u,\null_{n}Y))\,du\quad\text{a.s.}
\end{split}
\end{equation}
By Proposition~\ref{General Convergence Proposition 1}, for $\overline{\eta}:=\eta\vee 2$ there is a constant $\underline{c}_{\overline{\eta}p} > 0$ such that~\eqref{General Convergence Inequality 1} holds when $c_{p}$ and $p$ are replaced by $\underline{c}_{\overline{\eta}p}$ and $\overline{\eta}p$, respectively. So,
\[
E\bigg[\bigg|\int_{\underline{s}_{n}}^{s}\partial_{u}F(u,\null_{n}Y)\,du\bigg|^{p}\bigg]\leq c_{p,1} |\mathbb{T}_{n}|^{p}\big(1 + E\big[\|\null_{n}Y^{r}\|_{\infty}^{\overline{\eta} p}\big]\big)^{\eta/\overline{\eta}}
\]
with $c_{p,1}:= 2^{2p}c_{0}^{p}(1 + \underline{c}_{\overline{\eta}p})^{\eta/\overline{\eta}}$. Next, let $\lambda_{0}\geq 0$ denote a Lipschitz constant for $\partial_{x}F$, then we obtain that
\begin{equation*}
\begin{split}
\big(E\big[|\null_{n}\Delta_{u}|^{2p}\big]\big)^{1/2}&\leq \overline{c}_{p}|\mathbb{T}_{n}|^{p/2}\big(1 + E\big[\|\null_{n}Y^{r}\|_{\infty}^{\overline{\eta}p}\big]\big)^{1/\overline{\eta}}
\end{split}
\end{equation*}
for all $u\in [\underline{s}_{n},s]$ with $\overline{c}_{p}:=2^{3p/2}\lambda_{0}^{p}(1 + \underline{c}_{\overline{\eta}p})^{1/\overline{\eta}}$. Hence, for the second term in~\eqref{Functional Ito Formula Decomposition} the Cauchy-Schwarz inequality gives
\[
E\bigg[\bigg|\int_{\underline{s}_{n}}^{s}\null_{n}\Delta_{u}\underline{B}(u,\null_{n}Y)\,du\bigg|^{p}\bigg]\leq c_{p,2}|\mathbb{T}_{n}|^{p}(s-\underline{s}_{n})^{p/2}\big(1 + E\big[\|\null_{n}Y^{r}\|_{\infty}^{\overline{\eta}p}\big]\big)^{2/\overline{\eta}}
\]
with $c_{p,2}:=2^{3p/2}c^{p}(1+\underline{c}_{\overline{\eta}p})^{1/\overline{\eta}}\overline{c}_{p}$. Similarly, it follows that
\[
E\bigg[\bigg|\int_{\underline{s}_{n}}^{s}\null_{n}\Delta_{u}B_{H}(u,\null_{n}Y)\,dh(u)\bigg|^{p}\bigg]\leq c_{p,3}|\mathbb{T}_{n}|^{p}\big(1 + E\big[\|\null_{n}Y^{r}\|_{\infty}^{\overline{\eta}p}\big]\big)^{2/\overline{\eta}}
\]
for $c_{p,3}:= 2^{3p/2}\|h\|_{H,r}^{p}c^{p}(1 + \underline{c}_{\overline{\eta}p})^{1/\overline{\eta}}\overline{c}_{p}$. Lemma~\ref{Brownian Interpolation Integral Moments Lemma} yields the constant $\hat{w}_{p,2}$ such that~\eqref{Brownian Interpolation Constant} is valid when $q$ is replaced by $2$. Then
\begin{align*}
&E\bigg[\bigg|\int_{\underline{s}_{n}}^{s}\null_{n}\Delta_{u}\overline{B}(u,\null_{n}Y)\,d\null_{n}W_{u}\bigg|^{p}\bigg]\\
&\leq c^{p}(s-\underline{s}_{n})^{p-1}\int_{\underline{s}_{n}}^{s}\big(E\big[|\null_{n}\Delta_{u}|^{2p}\big]\big)^{1/2}\bigg(E\bigg[\bigg(\int_{\underline{s}_{n}}^{s}|\null_{n}\dot{W}_{v}|^{2}\,dv\bigg)^{p}\bigg]\bigg)^{1/2}\,du\\
&\leq c_{p,4}|\mathbb{T}_{n}|^{p}\big(1 + E\big[\|\null_{n}Y^{r}\|_{\infty}^{\overline{\eta}p}\big]\big)^{1/\overline{\eta}},
\end{align*}
by the Cauchy-Schwarz inequality, where $c_{p,4}:=2^{p}\hat{w}_{p,2}^{1/2}c^{p}\overline{c}_{p}$. Next, as $\Sigma$ cannot exceed the constant $c$, we directly get that
\[
E\bigg[\bigg|\int_{\underline{s}_{n}}^{s}\null_{n}\Delta_{u}\Sigma(u,\null_{n}Y)\,dW_{u}\bigg|^{p}\bigg]\leq c_{p,5}|\mathbb{T}_{n}|^{p}\big(1 + E\big[\|\null_{n}Y^{r}\|_{\infty}^{\overline{\eta}p}\big]\big)^{1/\overline{\eta}}
\]
for $c_{p,5}:=2^{p/2}w_{p}c^{p}\overline{c}_{p}$, where $w_{p}$ satisfies~\eqref{Mao's Inequality}. For the last term in~\eqref{Functional Ito Formula Decomposition} we define $c_{p,6}:= 2^{2p}c^{2p}c_{0}^{p}(1 + \underline{c}_{\overline{\eta}p})^{\eta/\overline{\eta}}$ and readily compute that
\begin{equation*}
E\bigg[\bigg|\int_{\underline{s}_{n}}^{s}\mathrm{tr}(\partial_{xx}F(u,\null_{n}Y)(\Sigma\Sigma')(u,\null_{n}Y))\,du\bigg|^{p}\bigg]\leq c_{p,6}|\mathbb{T}_{n}|^{p}\big(1 + E\big[\|\null_{n}Y^{r}\|_{\infty}^{\overline{\eta}p}\big]\big)^{\eta/\overline{\eta}}.
\end{equation*}
Thus, by setting $c_{p}:=6^{p-1}(c_{p,1} + (T-r)^{p/2}c_{p,2} + c_{p,3} + c_{p,4} + c_{p,5} + 2^{-p}c_{p,6})$, we obtain the asserted estimate.
\end{proof}

We come to the second remainder term arising in~\eqref{Main Remainder Decomposition}. As before, we derive an estimation that is necessary to apply Lemma~\ref{Auxiliary Convergence Result 1}.

\begin{lemma}\label{Main Remainder Lemma 2}
Let~\eqref{C.6} and~\eqref{C.7} hold and $h\in H_{r}^{1}([0,T],\mathbb{R}^{d})$. Then for each $p \geq 2$ there is $c_{p} > 0$ such that for any $n\in\mathbb{N}$ and every solution $\null_{n}Y$ to~\eqref{General Sequence SDE} we have
\[
\sup_{s\in [r,T]}\big[|\null_{n}Y_{s} - \null_{n}Y_{\underline{s}_{n}} - \Phi_{h,n}(s,\null_{n}Y,W)|^{p}\big]\leq c_{p}|\mathbb{T}_{n}|^{p}\big(1 + E\big[\|\null_{n}Y^{r}\|_{\infty}^{2p}\big]\big)^{1/2}.
\]
\end{lemma}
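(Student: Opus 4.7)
\textbf{Proof plan for Lemma~\ref{Main Remainder Lemma 2}.} The starting point is to expand the difference using the SDE~\eqref{General Sequence SDE}. Since $L_n(W) = \null_nW$ is piecewise linear and absolutely continuous, we have $\null_nW_s - \null_nW_{\underline{s}_n} = \int_{\underline{s}_n}^{s} \null_n\dot{W}_u\, du$, hence
\[
\Phi_{h,n}(s,\null_nY,W) = \int_{\underline{s}_n}^{s} B_H(\underline{s}_n,\null_nY)\dot{h}(u) + \overline{B}(\underline{s}_n,\null_nY)\null_n\dot{W}_u\, du + \Sigma(\underline{s}_n,\null_nY)(W_s - W_{\underline{s}_n}).
\]
Subtracting this from $\null_nY_s - \null_nY_{\underline{s}_n}$ produces a decomposition into four terms: a drift term $\int_{\underline{s}_n}^{s} \underline{B}(u,\null_nY)\, du$, and three ``increment'' terms in which $B_H$, $\overline{B}$ and $\Sigma$ are replaced by their differences $B_H(u,\cdot)-B_H(\underline{s}_n,\cdot)$, etc. I would estimate the $p$-th moment of each term separately and collect the results.

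For the $\underline{B}$ term, the growth bound $|\underline{B}|\le c(1+\|\cdot\|_\infty^\kappa)$ together with Proposition~\ref{General Convergence Proposition 1} (with exponent $\kappa p\le p$) yields a bound of the form $c|\mathbb{T}_n|^p(1+E[\|\null_nY^r\|_\infty^p])$. For the $B_H$ and $\Sigma$ terms, the $d_\infty$-Lipschitz property gives
\[
|B_H(u,\null_nY)-B_H(\underline{s}_n,\null_nY)| + |\Sigma(u,\null_nY)-\Sigma(\underline{s}_n,\null_nY)| \le 2\lambda\bigl(|u-\underline{s}_n|^{1/2} + \|\null_nY^u - \null_nY^{\underline{s}_n}\|_\infty\bigr).
\]
For $B_H$ I would use Cauchy--Schwarz together with $\int_{\underline{s}_n}^{s}|\dot h|^2\, du\le\|h\|_{H,r}^2$; for $\Sigma$ I would apply~\eqref{Mao's Inequality} directly. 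In both cases, combining with the moment estimate from Proposition~\ref{General Convergence Proposition 1} (which gives $E[\|\null_nY^u-\null_nY^{\underline{s}_n}\|_\infty^p]\le c_p(u-\underline{s}_n)^{p/2}(1+E[\|\null_nY^r\|_\infty^p])$) yields a bound in $|\mathbb{T}_n|^p(1+E[\|\null_nY^r\|_\infty^p])$.

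The key term, and the main source of difficulty, is the $\overline{B}$ term
\[
\int_{\underline{s}_n}^{s} \bigl(\overline{B}(u,\null_nY)-\overline{B}(\underline{s}_n,\null_nY)\bigr)\null_n\dot{W}_u\, du,
\]
since $\null_n\dot{W}_u$ has magnitude of order $|\mathbb{T}_n|^{-1/2}$. Here I would apply Cauchy--Schwarz pathwise,
\[
\biggl|\int_{\underline{s}_n}^{s}\!\!\!(\overline{B}(u,\null_nY)-\overline{B}(\underline{s}_n,\null_nY))\null_n\dot{W}_u\, du\biggr|^p \le \biggl(\int_{\underline{s}_n}^{s}\!\!\!|\overline{B}(u,\null_nY)-\overline{B}(\underline{s}_n,\null_nY)|^2\, du\biggr)^{p/2}\biggl(\int_{\underline{s}_n}^{s}\!\!\!|\null_n\dot{W}_u|^2\, du\biggr)^{p/2},
\]
then a second Cauchy--Schwarz on expectations. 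The second factor has $p$-th moment controlled by $\hat{w}_{p,2}$ via Lemma~\ref{Brownian Interpolation Integral Moments Lemma} (applied with $q=2$), independent of $|\mathbb{T}_n|^p\cdot|\mathbb{T}_n|^{-p} = 1$. For the $2p$-th moment of the first factor I would reuse the $d_\infty$-Lipschitz bound together with Jensen's inequality and Proposition~\ref{General Convergence Proposition 1} applied with exponent $2p$, producing a bound of order $|\mathbb{T}_n|^{2p}(1+E[\|\null_nY^r\|_\infty^{2p}])$. Taking the square root yields $|\mathbb{T}_n|^p(1+E[\|\null_nY^r\|_\infty^{2p}])^{1/2}$, which is exactly the shape of the claimed estimate and also explains the presence of the square root on the right-hand side. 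Summing the four contributions (and absorbing the weaker bounds $1+E[\|\null_nY^r\|_\infty^p]\le 2(1+E[\|\null_nY^r\|_\infty^{2p}])^{1/2}$ by Cauchy--Schwarz) then gives the conclusion for a suitable constant $c_p>0$.
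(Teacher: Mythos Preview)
Your proposal is correct and follows essentially the same route as the paper's proof: the same four-term decomposition, the same use of the $d_{\infty}$-Lipschitz bound together with Proposition~\ref{General Convergence Proposition 1}, the same pathwise Cauchy--Schwarz followed by Cauchy--Schwarz on expectations for the $\overline{B}$ term, and the same appeal to Lemma~\ref{Brownian Interpolation Integral Moments Lemma} with $q=2$. The only cosmetic difference is that the paper invokes the $2p$-exponent version of Proposition~\ref{General Convergence Proposition 1} from the outset for all four terms (so each bound already carries the factor $(1+E[\|\null_{n}Y^{r}\|_{\infty}^{2p}])^{1/2}$), whereas you use the $p$-exponent version where it suffices and convert at the end via $1+E[\|\null_{n}Y^{r}\|_{\infty}^{p}]\leq 2(1+E[\|\null_{n}Y^{r}\|_{\infty}^{2p}])^{1/2}$; both are fine.
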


\begin{proof}
We apply Proposition~\ref{General Convergence Proposition 1} to get a constant $\underline{c}_{2p} > 0$ such that~\eqref{General Convergence Inequality 1} is satisfied when $c_{p}$ and $p$ are replaced by $\underline{c}_{2p}$ and $2p$, respectively. Then
\[
E\bigg[\bigg|\int_{\underline{s}_{n}}^{s}\underline{B}(u,\null_{n}Y)\,du\bigg|^{p}\bigg]\leq c_{p,1}|\mathbb{T}_{n}|^{p}\big(1 + E\big[\|\null_{n}Y^{r}\|_{\infty}^{2p}\big]\big)^{1/2}
\]
for given $s\in [r,T]$ and $c_{p,1}:=2^{2p}c^{p}(1 + \underline{c}_{2p})^{1/2}$. Let $\lambda\geq 0$ denote a Lipschitz constant for $B_{H},\overline{B}$ and $\Sigma$, then the Cauchy-Schwarz inequality allows us to estimate that
\[
E\bigg[\bigg|\int_{\underline{s}_{n}}^{s}B_{H}(u,\null_{n}Y) - B_{H}(\underline{s}_{n},\null_{n}Y)\,dh(u)\bigg|^{p}\bigg]\leq c_{p,2}|\mathbb{T}_{n}|^{p}\big(1 + E\big[\|\null_{n}Y^{r}\|_{\infty}^{2p}\big]\big)^{1/2}
\]
with $c_{p,2}:= 2^{2p}\|h\|_{H,r}^{p}\lambda^{p}(1 + \underline{c}_{2p})^{1/2}$. We recall the constant $\hat{w}_{p,2}$ constructed in Lemma~\ref{Brownian Interpolation Integral Moments Lemma} such that~\eqref{Brownian Interpolation Constant} holds when $q$ is replaced by $2$ and compute that
\begin{align*}
&E\bigg[\bigg|\int_{\underline{s}_{n}}^{s}\overline{B}(u,\null_{n}Y) - \overline{B}(\underline{s}_{n},\null_{n}Y)\,d\null_{n}W_{u}\bigg|^{p}\bigg]\\
&\leq \lambda^{p}(s-\underline{s}_{n})^{p/2}E\bigg[\big((s-\underline{s}_{n})^{1/2} + \|\null_{n}Y^{s} - \null_{n}Y^{\underline{s}_{n}}\|_{\infty}\big)^{p}\bigg(\int_{\underline{s}_{n}}^{s}|\null_{n}\dot{W}_{v}|^{2}\,dv\bigg)^{p/2}\bigg]\\
& \leq c_{p,3}|\mathbb{T}_{n}|^{p}\big(1 + E\big[\|\null_{n}Y^{r}\|_{\infty}^{2p}\big]\big)^{1/2},
\end{align*}
due to the Cauchy-Schwarz inequality, where $c_{p,3}:=2^{5p/2}\hat{w}_{p,2}^{1/2}\lambda^{p}(1 + \underline{c}_{2p})^{1/2}$. Finally, let also recall the constant $w_{p}$ in~\eqref{Mao's Inequality}, then
\begin{align*}
E\bigg[\bigg|\int_{\underline{s}_{n}}^{s}\Sigma(u,\null_{n}Y) - \Sigma(\underline{s}_{n},\null_{n}Y)\,dW_{u}\bigg|^{p}\bigg]\leq c_{p,4}|\mathbb{T}_{n}|^{p}\big(1 + E\big[\|\null_{n}Y^{r}\|_{\infty}^{2p}\big]\big)^{1/2}
\end{align*}
for $c_{p,4}:= 2^{2p}w_{p}\lambda^{p}(1 + \underline{c}_{2p})^{1/2}$. So, the definition $c_{p}:=4^{p-1}(c_{p,1} + \cdots + c_{p,4})$ concludes the proof.
\end{proof}

\subsection{Convergence of the third remainder}\label{Convergence of the third remainder}

As preparation, we infer an estimate from Doob's ${L}^{2}$-maximal inequality. To this end, let for the moment $\tilde{d}\in\mathbb{N}$ and $\mathbb{T}$ be a partition of $[r,T]$ that is of the form $\mathbb{T}$ $=\{t_{0},\dots,t_{k}\}$ with $k\in\mathbb{N}$ and $t_{0},\dots,t_{k}\in [r,T]$ such that $r=t_{0} < \cdots < t_{k} = T$.

\begin{lemma}\label{Doobs Submartingale Lemma}
For every $l\in\{1,\dots,\tilde{d}\}$ let $(\null_{l}U_{i})_{i\in\{1,\dots,k\}}$ and $(\null_{l}V_{i})_{i\in\{1,\dots,k\}}$ be two sequences of $\mathbb{R}^{1\times d}$-valued and $\mathbb{R}^{d}$-valued random vectors, respectively, such that $\null_{l}U_{i}$ is $\mathscr{F}_{t_{i-1}}$-measurable, $\null_{l}V_{i}$ is $\mathscr{F}_{t_{i}}$-measurable,
\[
E\big[|\null_{l}U_{i}|^{4} + |\null_{l}V_{i}|^{4}\big] < \infty\quad\text{and}\quad E[\null_{l}V_{i}|\mathscr{F}_{t_{i-1}}] = 0\quad\text{a.s.}
\]
for all $i\in\{1,\dots,k\}$. Then
\begin{align*}
E\bigg[\max_{j\in\{i_{0},\dots,k\}}\bigg|\sum_{i=1}^{j-i_{0}}\sum_{l=1}^{\tilde{d}}\null_{l}U_{i}\,\null_{l}V_{i}\bigg|^{2}\bigg]\leq 4\sum_{i=1}^{k-i_{0}}\sum_{l_{1},l_{2}=1}^{\tilde{d}} E\big[\null_{l_{1}}U_{i}\,\null_{l_{1}}V_{i}\,\null_{l_{2}}V_{i}'\null_{l_{2}}U_{i}'\big]
\end{align*}
for every $i_{0}\in\{0,\dots,k-1\}$.
\end{lemma}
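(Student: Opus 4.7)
The plan is to identify the partial sums
\[
N_{j} := \sum_{i=1}^{j}\sum_{l=1}^{\tilde{d}}\null_{l}U_{i}\,\null_{l}V_{i},\qquad j\in\{0,\dots,k-i_{0}\},
\]
(with the convention $N_{0}=0$) as an $L^{2}$-martingale with respect to the filtration $\mathscr{G}_{j}:=\mathscr{F}_{t_{j}}$, and then to combine Doob's $L^{2}$-maximal inequality with the orthogonality of martingale differences. The integrability $E[N_{j}^{2}]<\infty$ follows from the Cauchy-Schwarz inequality together with the hypothesis $E[|\null_{l}U_{i}|^{4}+|\null_{l}V_{i}|^{4}]<\infty$, and $N_{j}$ is $\mathscr{G}_{j}$-measurable since each $\null_{l}U_{i}\null_{l}V_{i}$ with $i\leq j$ is $\mathscr{F}_{t_{i}}$-measurable. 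The key martingale identity
\[
E[N_{j}-N_{j-1}\mid\mathscr{G}_{j-1}]=\sum_{l=1}^{\tilde{d}}\null_{l}U_{j}\,E[\null_{l}V_{j}\mid\mathscr{F}_{t_{j-1}}]=0
\]
uses precisely that $\null_{l}U_{j}$ is $\mathscr{F}_{t_{j-1}}$-measurable and that $E[\null_{l}V_{j}\mid\mathscr{F}_{t_{j-1}}]=0$.

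Having verified the martingale property, Doob's $L^{2}$-maximal inequality yields
\[
E\Big[\max_{j\in\{0,\dots,k-i_{0}\}}|N_{j}|^{2}\Big]\leq 4\,E[|N_{k-i_{0}}|^{2}],
\]
whose left-hand side coincides with the left-hand side of the claim after the reindexing $j\mapsto j+i_{0}$. Since the martingale increments $D_{i}:=\sum_{l=1}^{\tilde{d}}\null_{l}U_{i}\null_{l}V_{i}$ are orthogonal in $L^{2}$ (for $i<i'$, conditioning on $\mathscr{F}_{t_{i'-1}}$ annihilates $D_{i'}$ while leaving $D_{i}$ unchanged), the cross terms vanish and
\[
E[|N_{k-i_{0}}|^{2}]=\sum_{i=1}^{k-i_{0}}E[D_{i}^{2}].
\]

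Finally, it remains to expand $D_{i}^{2}$. Each product $\null_{l}U_{i}\null_{l}V_{i}$ is a scalar ($1\times d$ times $d\times 1$), hence equals its own transpose $\null_{l}V_{i}'\null_{l}U_{i}'$. Applying this identity to one of the two factors in $D_{i}^{2}$ and expanding the resulting double sum gives
\[
D_{i}^{2}=\sum_{l_{1},l_{2}=1}^{\tilde{d}}\null_{l_{1}}U_{i}\,\null_{l_{1}}V_{i}\,\null_{l_{2}}V_{i}'\,\null_{l_{2}}U_{i}',
\]
which matches the right-hand side of the stated estimate once expectations are taken. No step is genuinely hard; the only minor obstacles are the index shift relating the two indexing conventions and the scalar-transpose trick used to bring the summand into the asymmetric form appearing on the right-hand side of the bound.
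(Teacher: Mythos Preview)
Your proof is correct and follows essentially the same route as the paper: define the partial sums, verify they form a square-integrable martingale with respect to $(\mathscr{F}_{t_{j}})$, apply Doob's $L^{2}$-maximal inequality, and use orthogonality of the increments to reduce $E[N_{k-i_{0}}^{2}]$ to $\sum_{i}E[D_{i}^{2}]$, which is then expanded via the scalar-equals-its-transpose identity. The only cosmetic difference is indexing: the paper writes $S_{j}=\sum_{i=1}^{j-i_{0}}Y_{i}$ for $j\in\{i_{0},\dots,k\}$ with filtration $(\mathscr{F}_{t_{j-i_{0}}})$, whereas you shift indices first and work directly on $\{0,\dots,k-i_{0}\}$.
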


\begin{proof}
We set $Y_{i}:= \sum_{l=1}^{\tilde{d}}\null_{l}U_{i}\,\null_{l}V_{i}$ for each $i\in\{1,\dots,k-i_{0}\}$, then the sequence $(S_{j})_{j\in\{i_{0},\dots,k\}}$ of random variables given by $S_{j}:=\sum_{i=1}^{j-i_{0}}Y_{i}$ is a square-integrable martingale with respect to $(\mathscr{F}_{t_{j-i_{0}}})_{j\in\{i_{0},\dots,k\}}$. So,
\begin{equation*}
E\bigg[\max_{j\in\{i_{0},\dots,k\}}\bigg|\sum_{i=1}^{j-i_{0}}\sum_{l=1}^{\tilde{d}}\null_{l}U_{i}\,\null_{l}V_{i}\bigg|^{2}\bigg] = E\big[\max_{j\in\{i_{0},\dots,k\}}S_{j}^{2}\big]\leq 4 E\big[S_{k}^{2}\big],
\end{equation*}
by Doob's ${L}^{2}$-maximal inequality. Moreover, let $i,j\in\{1,\dots,k-i_{0}\}$ be such that $i\leq j$, then
\begin{equation*}
E[Y_{i}Y_{j}] = \mathbbm{1}_{\{i\}}(j) \sum_{l_{1},l_{2}=1}^{\tilde{d}} E[\null_{l_{1}}U_{i}\,E[\null_{l_{1}}V_{i}\,\null_{l_{2}}V_{j}'|\mathscr{F}_{t_{j-1}}]\,\null_{l_{2}}U_{j}'].
\end{equation*}
In particular, $Y_{i}$ and $Y_{j}$ are uncorrelated for $i < j$. By Bienaym{\'e}'s identity, $E[S_{k}^{2}]$ $= \sum_{i=1}^{k-i_{0}}E[Y_{i}^{2}]$, which yields the claim.
\end{proof}

\begin{proposition}\label{Main Remainder Proposition 5}
Let~\eqref{C.6} and~\eqref{C.7} hold and $h\in H_{r}^{1}([0,T],\mathbb{R}^{d})$. Then there is $c_{2} > 0$ such that for each $n\in\mathbb{N}$ and any solution $\null_{n}Y$ to~\eqref{General Sequence SDE},
\begin{align*}
E\bigg[\max_{j\in\{0,\dots,k_{n}\}}\bigg|\int_{r}^{t_{j,n}}\partial_{x}\overline{B}(\underline{s}_{n},\null_{n}Y)&\Phi_{h,n}(s,\null_{n}Y,W)\null_{n}\dot{W}_{s} - R(\underline{s}_{n},\null_{n}Y)\gamma_{n}(s)\,ds\bigg|^{2}\bigg]\\
& \leq c_{2} |\mathbb{T}_{n}|\big(1 + E\big[\|\null_{n}Y^{r}\|_{\infty}^{2}\big]\big).
\end{align*}
\end{proposition}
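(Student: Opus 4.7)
The plan is to use the identity $R=\tfrac{1}{2}\partial_x\overline{B}\,\overline{B}+\partial_x\overline{B}\,\Sigma$ to split the integrand into three contributions matching the three summands of $\Phi_{h,n}$---the $B_H$-piece, the $\overline{B}$-piece, and the $\Sigma$-piece---and then recognize each contribution as (essentially) a sum of martingale differences, to which Lemma~\ref{Doobs Submartingale Lemma} applies.

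First I would write the integral as a telescoping sum $\sum_{i=1}^{j-1}(U_i^{(H)}+U_i^{(B)}+U_i^{(\Sigma)})$ over the partition intervals $[t_{i,n},t_{i+1,n}]$ (the interval with $i=0$ contributes nothing, since both $\null_n\dot W$ and $\gamma_n$ vanish on $[r,t_{1,n}]$), where each $U_i^{(\cdot)}$ pairs the corresponding piece of $\Phi_{h,n}$ with the corresponding piece of $R\gamma_n$. On $[t_{i,n},t_{i+1,n})$ with $i\ge 1$ I would use $\null_n\dot W_s=\Delta W_{t_{i,n}}/\Delta t_{i+1,n}$ together with the explicit affine forms of $\null_nW_s-\null_nW_{\underline s_n}$ and $W_s-W_{\underline s_n}$ to expand each $U_i^{(\cdot)}$ as an $\mathscr{F}_{t_{i-1,n}}$-measurable coefficient times Brownian increments of the types $\Delta W_{t_{i-1,n}}\otimes\Delta W_{t_{i,n}}$ and $\Delta W_{t_{i,n}}\otimes\Delta W_{t_{i,n}}-\mathbbm{I}_d\Delta t_{i,n}$, plus (only in the $\Sigma$-case) a cross term $(\Delta W_{t_{i,n}})_l\int_{t_{i,n}}^{t_{i+1,n}}(W_s-W_{t_{i,n}})_a\,ds/\Delta t_{i+1,n}$. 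The subtracted $R\gamma_n$ piece supplies exactly the constants $\delta_{al}\Delta t_{i,n}$ (from $\Sigma$, since $\int_{t_{i,n}}^{t_{i+1,n}}\gamma_n\,ds=\Delta t_{i,n}$) and $\tfrac12\delta_{al}\Delta t_{i,n}$ (from $\overline{B}$, with the $\tfrac12$ arising from $\int_{t_{i,n}}^{t_{i+1,n}}(s-t_{i,n})/\Delta t_{i+1,n}\,ds=\tfrac12\Delta t_{i+1,n}$).

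Once these cancellations are verified, each $U_i^{(\cdot)}$ satisfies $E[\,\cdot\,|\mathscr{F}_{t_{i-1,n}}]=0$. I would apply Lemma~\ref{Doobs Submartingale Lemma} with the partition $\mathbb{T}_n$ and the factorization into an $\mathscr{F}_{t_{i-1,n}}$-measurable coefficient and a zero-mean $\Delta W_{t_{i,n}}$-factor to bound $E[\max_j|\sum_{i=1}^{j}U_i^{(\cdot)}|^2]$ by $4\sum_iE[|U_i^{(\cdot)}|^2]$ for the pure-increment portions. The residual $\Sigma$-cross term, which is $\mathscr{F}_{t_{i+1,n}}$-measurable with zero conditional mean given $\mathscr{F}_{t_{i,n}}$, I would represent instead as a single It\^o martingale $\int_r^{t_{j,n}}\mathcal{H}_s\,dW_s$ with $\mathcal{H}_s$ piecewise of the form (coefficient at $t_{i-1,n}$)$\cdot\Delta W_{t_{i,n}}\cdot(t_{i+1,n}-s)/\Delta t_{i+1,n}$ on $[t_{i,n},t_{i+1,n}]$, and apply Doob's $L^2$-inequality directly. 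The variance bounds use $E[|\Delta W_{t_{i,n}}|^4]=O(\Delta t_{i,n}^2)$, $E[|\int_{t_{i,n}}^{t_{i+1,n}}(W_s-W_{t_{i,n}})\,ds|^2]=O(\Delta t_{i+1,n}^3)$, $|h(s)-h(\underline s_n)|^2\le 2c_{\mathbb{T}}\Delta t_{i+1,n}\int_{t_{i-1,n}}^{t_{i+1,n}}|\dot h|^2\,du$, the uniform bounds $|\partial_x\overline{B}|,|\overline{B}|,|\Sigma|\le c$ and the growth $|B_H|\le c(1+\|\null_nY\|_\infty^\kappa)$, combined with Proposition~\ref{General Convergence Proposition 1} to absorb the moments of $\null_nY$ into $1+E[\|\null_nY^r\|_\infty^2]$. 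Summation over $i$ then yields a bound of order $|\mathbb{T}_n|\bigl(1+E[\|\null_nY^r\|_\infty^2]\bigr)$, as required.

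The main obstacle is the tensorial bookkeeping: verifying that after subtraction of $R\gamma_n$ every surviving term indeed has the martingale-difference structure, and in particular that the factor $\tfrac12$ inside $R$ traces correctly to the interpolation integral $\tfrac12\Delta t_{i+1,n}$. Some care is also needed at the edge index $i=1$, where $\null_nW_{\underline{s}_n}=W_r$ and the ``memory'' cross-increment $\Delta W_{t_{i-1,n}}\otimes\Delta W_{t_{i,n}}$ does not appear, and in handling the $\Sigma$-cross term outside Lemma~\ref{Doobs Submartingale Lemma} since its measurability only increases at time $t_{i+1,n}$, not $t_{i,n}$.
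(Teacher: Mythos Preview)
Your proposal is correct and follows essentially the same route as the paper: the six-term decomposition~\eqref{Integrand Decomposition} is exactly the split of each of your three pieces $U_i^{(H)},U_i^{(B)},U_i^{(\Sigma)}$ into a ``memory'' part (increment over $[\underline{s}_n,s_n]$) and a ``current'' part (increment over $[s_n,s]$), the $\tfrac12$ and $1$ cancellations with $R\gamma_n$ are verified just as you describe, and your It\^o representation of the residual $\Sigma$-cross term is precisely the paper's treatment of term~6. The only cosmetic difference is that for the memory-type contributions (the paper's terms~1 and~2) the paper uses Lemma~\ref{Brownian Linear Interpolation Property Lemma} to rewrite them as continuous It\^o integrals and applies~\eqref{Mao's Inequality}, whereas you absorb them into the discrete martingale framework of Lemma~\ref{Doobs Submartingale Lemma}; both give the same $|\mathbb{T}_n|$ bound.
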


\begin{proof}
First, let us recall the definition of $R$ in~\eqref{General Remainder Term} to write the $k$-th coordinate of $\partial_{x}\overline{B}(\underline{s}_{n},\null_{n}Y)\Phi_{h,n}(s,\null_{n}Y,W)\,\null_{n}\dot{W}_{s}$ $-\, R(\underline{s}_{n},\null_{n}Y)\gamma_{n}(s)$ in the form
\[
\sum_{l=1}^{d}\partial_{x}\overline{B}_{k,l}(\underline{s}_{n},\null_{n}Y)\big(\Phi_{h,n}(s,\null_{n}Y,W)\null_{n}\dot{W}_{s}^{(l)} - ((1/2)\overline{B} + \Sigma)(\underline{s}_{n},\null_{n}Y)\gamma_{n}(s)e_{l}\big)
\]
for all $k\in\{1,\dots,m\}$ and $s\in [r,T)$, where the $l$-th coordinate of any $\mathbb{R}^{d}$-valued stochastic process $X$ is denoted by $X^{(l)}$ for every $l\in\{1,\dots,d\}$. Moreover, we decompose that\begin{equation}\label{Integrand Decomposition}
\begin{split}
&\Phi_{h,n}(s,\null_{n}Y,W)\null_{n}\dot{W}_{s}^{(l)} - ((1/2)\overline{B} + \Sigma)(\underline{s}_{n},\null_{n}Y)\gamma_{n}(s)e_{l}\\
&= B_{H}(\underline{s}_{n},\null_{n}Y)(h(s_{n})-h(\underline{s}_{n}))\null_{n}\dot{W}_{s}^{(l)} + \overline{B}(\underline{s}_{n},\null_{n}Y)(\null_{n}W_{s_{n}}-\null_{n}W_{\underline{s}_{n}})\null_{n}\dot{W}_{s}^{(l)}\\
&\quad + \Sigma(\underline{s}_{n},\null_{n}Y)\big(\Delta W_{s_{n}}\null_{n}\dot{W}_{s}^{(l)} - \gamma_{n}(s)e_{l}\big) + B_{H}(\underline{s}_{n},\null_{n}Y)(h(s) - h(s_{n}))\null_{n}\dot{W}_{s}^{(l)}\\
&\quad + \overline{B}(\underline{s}_{n},\null_{n}Y)\big((\null_{n}W_{s} - \null_{n}W_{s_{n}})\null_{n}\dot{W}_{s}^{(l)} - (1/2)\gamma_{n}(s)e_{l}\big)\\
&\quad + \Sigma(\underline{s}_{n},\null_{n}Y)(W_{s} - W_{s_{n}})\null_{n}\dot{W}_{s}^{(l)}
\end{split}
\end{equation}
for any $l\in\{1,\dots,d\}$. We begin with the first term in this decomposition and use Lemma~\ref{Brownian Linear Interpolation Property Lemma} to obtain that
\begin{align*}
&\int_{r}^{t_{j,n}}(\partial_{x}\overline{B}_{k,l} B_{H})(\underline{s}_{n},\null_{n}Y)(h(s_{n})-h(\underline{s}_{n}))\null_{n}\dot{W}_{s}^{(l)}\,ds\\
&= \int_{r}^{t_{j-1,n}}(\partial_{x}\overline{B}_{k,l} B_{H})(s_{n},\null_{n}Y)(h(\overline{s}_{n})-h(s_{n}))\,dW_{s}^{(l)}\quad\text{a.s.}
\end{align*}
for each $j\in\{1,\dots,k_{n}\}$, $k\in\{1,\dots,m\}$ and $l\in\{1,\dots,d\}$. Proposition~\ref{General Convergence Proposition 1} provides $\underline{c}_{2} > 0$ such that~\eqref{General Convergence Inequality 1} is satisfied for $p=2$ when the appearing constant $c_{p}$ is replaced by $\underline{c}_{2}$. Hence, condition~\eqref{C.6} gives
\begin{align*}
E\bigg[\max_{j\in\{0,\dots,k_{n}\}}&\sum_{k=1}^{m}\bigg|\int_{r}^{t_{j,n}}\sum_{l=1}^{d}(\partial_{x}\overline{B}_{k,l} B_{H})(\underline{s}_{n},\null_{n}Y)(h(s_{n})-h(\underline{s}_{n}))\null_{n}\dot{W}_{s}^{(l)}\,ds\bigg|^{2}\bigg]\\
&\leq 2w_{2}c^{4}\int_{r}^{T}\big(1 + E\big[\|\null_{n}Y^{s_{n}}\|_{\infty}^{2\kappa}\big]\big)|h(\overline{s}_{n})-h(s_{n})|^{2}\,ds\\
&\leq c_{2,1}|\mathbb{T}_{n}|\big(1 + E\big[\|\null_{n}Y^{r}\|_{\infty}^{2}\big]\big),
\end{align*}
where $c_{2,1}:= 2^{2}w_{2}(T-r)\|h\|_{H,r}^{2}c^{4}(1+\underline{c}_{2})$ and $w_{2}$ satisfies~\eqref{Mao's Inequality} for $p=2$. Similarly, another application of Lemma~\ref{Brownian Linear Interpolation Property Lemma} gives us that
\begin{align*}
&\int_{r}^{t_{j,n}}(\partial_{x}\overline{B}_{k,l} \overline{B})(\underline{s}_{n},\null_{n}Y)(\null_{n}W_{s_{n}}-\null_{n}W_{\underline{s}_{n}})\null_{n}\dot{W}_{s}^{(l)}\,ds\\
&= \int_{r}^{t_{j-1,n}} (\partial_{x}\overline{B}_{k,l} \overline{B})(s_{n},\null_{n}Y)\Delta W_{s_{n}}\,dW_{s}^{(l)}\quad\text{a.s.}
\end{align*}
for all $j\in\{1,\dots,k_{n}\}$, $k\in\{1,\dots,m\}$ and $l\in\{1,\dots,d\}$. Thus, with the constant $c_{2,2}:= w_{2}(T-r)dc^{4}$ we can estimate that
\begin{align*}
E\bigg[\max_{j\in\{0,\dots,k_{n}\}}&\sum_{k=1}^{m}\bigg|\int_{r}^{t_{j,n}}\sum_{l=1}^{d}(\partial_{x}\overline{B}_{k,l} \overline{B})(\underline{s}_{n},\null_{n}Y)(\null_{n} W_{s_{n}}-\null_{n}W_{\underline{s}_{n}})\null_{n}\dot{W}_{s}^{(l)}\,ds\bigg|^{2}\bigg]\\
&\leq w_{2}c^{4}\int_{r}^{T}E\big[|\Delta W_{s_{n}}|^{2}\big]\,ds \leq c_{2,2}|\mathbb{T}_{n}|.
\end{align*}

Let us move on to the third expression in~\eqref{Integrand Decomposition}. First, we define an $\mathbb{R}^{d}$-valued $\mathscr{F}_{t_{i,n}}$-measurable random vector by
\begin{equation}\label{Auxiliary random vector}
\null_{l,n}V_{i}:= \Delta W_{t_{i,n}}\Delta W_{t_{i,n}}^{(l)} - \Delta t_{i,n}e_{l}
\end{equation}
for every $i\in\{1,\dots,k_{n}\}$ and $l\in\{1,\dots,d\}$, then $\null_{l,n}V_{i}$ is independent of $\mathscr{F}_{t_{i-1,n}}$ and satisfies $E[|\null_{l,n}V_{i}|^{4}] < \infty$ and $E[\null_{l,n}V_{i}] = 0$. Moreover, a case distinction shows that
\begin{equation}\label{Auxiliary random vector 2}
E[\null_{l_{1},n}V_{i}\,\null_{l_{2},n}V_{i}']= \mathbbm{1}_{\{l_{2}\}}(l_{1})(\Delta t_{i,n})^{2}\big(\mathbb{I}_{d} + \mathbb{I}_{l_{2},l_{1}}\big)
\end{equation}
for each $i\in\{1,\dots,k_{n}\}$ and $l_{1},l_{2}\in\{1,\dots,d\}$, where $\mathbb{I}_{l_{2},l_{1}}\in\mathbb{R}^{d\times d}$ denotes the matrix whose $(l_{2},l_{1})$-entry is $1$ and whose all other entries are zero. We compute that
\begin{align*}
&\int_{r}^{t_{j,n}}(\partial_{x}\overline{B}_{k,l}\Sigma)(\underline{s}_{n},\null_{n}Y)\big(\Delta W_{s_{n}}\null_{n}\dot{W}_{s}^{(l)} - \gamma_{n}(s) e_{l}\big)\,ds\\
&= \sum_{i=1}^{j-1}(\partial_{x}\overline{B}_{k,l}\Sigma)(t_{i-1,n},\null_{n}Y)\null_{l,n}V_{i}
\end{align*}
for all $j\in\{1,\dots,k_{n}\}$, $k\in\{1,\dots,m\}$ and $l\in\{1,\dots,d\}$, since $\gamma_{n}(s) = 0$ for each $s\in [r,t_{1,n})$. Consequently, Lemma~\ref{Doobs Submartingale Lemma} and the representation~\eqref{Auxiliary random vector 2} imply that
\begin{align*}
E\bigg[\max_{j\in\{0,\dots,k_{n}\}}&\sum_{k=1}^{m}\bigg|\int_{r}^{t_{j,n}}\sum_{l=1}^{d}(\partial_{x}\overline{B}_{k,l} \Sigma)(\underline{s}_{n},\null_{n}Y)\big(\Delta W_{s_{n}}\null_{n}\dot{W}_{s}^{(l)} - \gamma_{n}(s)e_{l}\big)\,ds\bigg|^{2}\bigg]\\
&\leq 2^{3}\sum_{i=1}^{k_{n}-1}(\Delta t_{i,n})^{2}\sum_{k=1}^{m}\sum_{l=1}^{d} E[|(\partial_{x}\overline{B}_{k,l}\Sigma)(t_{i-1,n},\null_{n}Y)|^{2}] \leq c_{2,3}|\mathbb{T}_{n}|
\end{align*}
for $c_{2,3}:= 2^{3}(T-r)c^{4}$, since we can use that $\overline{x}^{t}\mathbbm{I}_{l_{2},l_{1}}\overline{y} \leq (1/2)(\overline{x}_{l_{2}}^{2} + \overline{y}_{l_{1}}^{2})$ for all $l_{1},l_{2}\in\{1,\dots,d\}$ and $\overline{x},\overline{y}\in\mathbb{R}^{d}$, by Young's inequality. To deal with the fourth term in \eqref{Integrand Decomposition}, let us note that
\begin{equation}\label{First Expression Computation}
\begin{split}
&\int_{r}^{t_{j,n}}(\partial_{x}\overline{B}_{k,l} B_{H})(\underline{s}_{n},\null_{n}Y)(h(s) - h(s_{n}))\null_{n}\dot{W}_{s}^{(l)}\,ds\\
&=\sum_{i=1}^{j-1}(\partial_{x}\overline{B}_{k,l} B_{H})(t_{i-1,n},\null_{n}Y)\frac{\Delta W_{t_{i,n}}^{(l)}}{\Delta t_{i+1,n}}\int_{t_{i,n}}^{t_{i+1,n}}h(s) - h(t_{i,n})\,ds\\
&= \int_{r}^{t_{j,n}}(\partial_{x}\overline{B}_{k,l} B_{H})(\underline{s}_{n},\null_{n}Y)\Delta W_{s_{n}}^{(l)}\frac{(\overline{s}_{n} - s)}{\Delta \overline{s}_{n}}\,dh(s)
\end{split}
\end{equation}
for each $j\in\{1,\dots,k_{n}\}$, $k\in\{1,\dots,m\}$ and $l\in\{1,\dots,d\}$, as integration by parts yields that $\int_{t_{i,n}}^{t_{i+1,n}}h(s) - h(t_{i,n})\,ds = \int_{t_{i,n}}^{t_{i+1,n}}t_{i+1,n} - s\,dh(s)$ for all $i\in\{0,\dots,k_{n}-1\}$. So,
\begin{align*}
E\bigg[\max_{j\in\{0,\dots,k_{n}\}}&\sum_{k=1}^{m}\bigg|\int_{r}^{t_{j,n}}\sum_{l=1}^{d}(\partial_{x}\overline{B}_{k,l} B_{H})(\underline{s}_{n},\null_{n}Y)(h(s) - h(s_{n}))\null_{n}\dot{W}_{s}^{(l)}\,ds\bigg|^{2}\bigg]\\
&\leq \|h\|_{H,r}^{2}\sum_{k=1}^{m}\int_{r}^{T}E\bigg[\bigg|\sum_{l=1}^{d}(\partial_{x}\overline{B}_{k,l}B_{H})(\underline{s}_{n},\null_{n}Y)\Delta W_{s_{n}}^{(l)}\bigg|^{2}\bigg]\,ds\\
&\leq c_{2,4}|\mathbb{T}_{n}|\big(1 + E\big[\|\null_{n}Y^{r}\|_{\infty}^{2}\big]\big)
\end{align*}
with $c_{2,4}:=2^{2}(T-r)\|h\|_{H,r}^{2}c^{4}(1+\underline{c}_{2})$, by the Cauchy-Schwarz inequality and the facts that $\Delta W_{s_{n}}^{(1)},\dots,\Delta W_{s_{n}}^{(d)}$ are pairwise independent and independent of $\mathscr{F}_{\underline{s}_{n}}$ for all $s\in [r,T]$.

To handle the fifth expression in~\eqref{Integrand Decomposition}, we proceed similarly as with the third expression. We define $\null_{l,n}U_{s}:=(\null_{n}W_{s}-\null_{n}W_{s_{n}})\null_{n}\dot{W}_{s}^{(l)} - (1/2)\gamma_{n}(s)e_{l}$ for all $s\in [r,T]$ and note that
\[
\int_{r}^{t_{j,n}}(\partial_{x}\overline{B}_{k,l}\overline{B})(\underline{s}_{n},\null_{n}Y)\null_{l,n}U_{s}\,ds = \frac{1}{2}\sum_{i=1}^{j-1}(\partial_{x}\overline{B}_{k,l} \overline{B})(t_{i-1,n},\null_{n}Y)\null_{l,n}V_{i}
\]
for every $j\in\{1,\dots,k_{n}\}$, $k\in\{1,\dots,m\}$ and $l\in\{1,\dots,d\}$, where we utilized that $\int_{t_{i,n}}^{t_{i+1,n}} s - t_{i,n}\,ds$ $= (1/2)(\Delta t_{i+1,n})^{2}$ for each $i\in\{0,\dots,k_{n}-1\}$. Consequently,
\begin{align*}
E\bigg[\max_{j\in\{0,\dots,k_{n}\}}&\sum_{k=1}^{m}\bigg|\int_{r}^{t_{j,n}}\sum_{l=1}^{d}(\partial_{x}\overline{B}_{k,l} \overline{B})(\underline{s}_{n},\null_{n}Y)\null_{l,n}U_{s}\,ds\bigg|^{2}\bigg]\\
&\leq 2\sum_{i=1}^{k_{n}-1}(\Delta t_{i,n})^{2}\sum_{k=1}^{m}\sum_{l=1}^{d}E[|(\partial_{x}\overline{B}_{k,l}\overline{B})(t_{i-1,n},\null_{n}Y)|^{2}]\leq c_{2,5}|\mathbb{T}_{n}|
\end{align*}
for $c_{2,5}:= 2(T-r)c^{4}$. We turn to the last term in~\eqref{Integrand Decomposition} and proceed just as in~\eqref{First Expression Computation} to get that
\begin{align*}
&\int_{r}^{t_{j,n}}(\partial_{x} \overline{B}_{k,l}\Sigma)(\underline{s}_{n},\null_{n}Y)(W_{s} - W_{s_{n}})\null_{n}\dot{W}_{s}^{(l)}\,ds\\
&= \int_{r}^{t_{j,n}}(\partial_{x}\overline{B}_{k,l} \Sigma)(\underline{s}_{n},\null_{n}Y)\Delta W_{s_{n}}^{(l)}\frac{(\overline{s}_{n} - s)}{\Delta \overline{s}_{n}}\,dW_{s}\quad\text{a.s.}
\end{align*}
for each $j\in\{1,\dots,k_{n}\}$, $k\in\{1,\dots,m\}$ and $l\in\{1,\dots,d\}$, as It{\^o}'s formula gives $\int_{t_{i,n}}^{t_{i+1,n}}W_{s} - W_{t_{i,n}}\,ds = \int_{t_{i,n}}^{t_{i+1,n}}t_{i+1,n} - s\,dW_{s}$ a.s.~for all $i\in\{0,\dots,k_{n}-1\}$. Therefore,
\begin{align*}
E\bigg[\max_{j\in\{0,\dots,k_{n}\}}&\sum_{k=1}^{m}\bigg|\int_{r}^{t_{j,n}}\sum_{l=1}^{d}(\partial_{x}\overline{B}_{k,l}\Sigma)(\underline{s}_{n},\null_{n}Y)(W_{s} - W_{s_{n}})\null_{n}\dot{W}_{s}^{(l)}\,ds\bigg|^{2}\bigg]\\
&\leq w_{2}\sum_{k=1}^{m}\int_{r}^{T}E\bigg[\bigg|\sum_{l=1}^{d}(\partial_{x}B_{k,l}\Sigma)(\underline{s}_{n},\null_{n}Y)\Delta W_{s_{n}}^{(l)}\bigg|^{2}\bigg]\,ds\leq c_{2,6}|\mathbb{T}_{n}|
\end{align*}
with $c_{2,6}:=w_{2}(T-r)c^{4}$. As before, we used that $\Delta W_{s_{n}}^{(1)},\dots,\Delta W_{s_{n}}^{(d)}$ are pairwise independent and independent of $\mathscr{F}_{\underline{s}_{n}}$ for every $s\in [r,T]$. Therefore, by setting $c_{2}:= 6(c_{2,1} + \cdots + c_{2,6})$, the assertion follows.
\end{proof}

\subsection{Proofs of Theorems~\ref{General SDE Convergence Theorem} and~\ref{Support Theorem}}\label{Proofs of Theorem 7 and 1}

At first, we consider a sufficient condition for a Dol{\'e}ans-Dade exponential to be a true martingale and let temporarily $\mathbb{T}$ be a partition of $[r,T]$ of the form $\mathbb{T}=\{t_{0},\dots,t_{k}\}$ with $k\in\mathbb{N}$ and $t_{0},\dots,t_{k}\in [r,T]$ satisfying $r=t_{0} < \dots < t_{k} = T$.

\begin{lemma}\label{Doleans-Dade Exponential Lemma}
Let $f:[r,T]\rightarrow\mathbb{R}^{m\times d}$ be measurable such that $\int_{r}^{T}|f(s)|^{2}\,ds < \infty$ and $(Y_{i})_{i\in\{0,\dots,k-1\}}$ be an $(\mathscr{F}_{t_{i}})_{i\in\{0,\dots,k-1\}}$-adapted sequence of $\mathbb{R}^{m\times d}$-valued random matrices. Define the process $X:[r,T]\times\Omega\rightarrow\mathbb{R}^{1\times d}$ coordinatewise via
\[
X_{t}^{(l)}:=\sum_{i=0}^{k-1}\sum_{j=1}^{m}f_{j,l}(t)Y_{i}^{(j,l)}\mathbbm{1}_{[t_{i},t_{i+1})}(t),
\]
then the continuous local martingale $Z\in\mathscr{C}([0,T],\mathbb{R})$ given by $Z^{r} = 1$ and
\[
Z_{t} = \exp\bigg(\int_{r}^{t}X_{s}\,dW_{s} - \frac{1}{2}\int_{r}^{t}|X_{s}|^{2}\,ds\bigg)
\]
for all $t\in [r,T]$ a.s.~is a martingale.
\end{lemma}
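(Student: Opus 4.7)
The plan is to establish $E[Z_{T}]=1$ via a stepwise conditioning argument along the partition $\mathbb{T}$, after which standard supermartingale theory upgrades this to the true-martingale property.

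First, I would verify that $Z$ is a well-defined positive continuous local martingale. The pointwise bound
\[
\int_{r}^{T}|X_{s}|^{2}\,ds \leq \Big(\max_{0\leq i\leq k-1}|Y_{i}|\Big)^{2}\int_{r}^{T}|f(s)|^{2}\,ds
\]
is finite a.s., so $M_{t}=\int_{r}^{t}X_{s}\,dW_{s}$ is a well-defined continuous local martingale and $Z$ is a positive continuous local martingale, hence a supermartingale with $E[Z_{t}]\leq 1$ for every $t\in[r,T]$.

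The main step is the inductive identity
\[
E[Z_{t_{i+1}}\mid\mathscr{F}_{t_{i}}]=Z_{t_{i}}\quad\text{a.s.}\qquad(i=0,\dots,k-1),
\]
which, using the $\mathscr{F}_{t_{i}}$-measurability of $Z_{t_{i}}$ and strict positivity, reduces to
\[
E\bigg[\exp\bigg(\int_{t_{i}}^{t_{i+1}}X_{s}\,dW_{s}-\tfrac{1}{2}\int_{t_{i}}^{t_{i+1}}|X_{s}|^{2}\,ds\bigg)\,\bigg|\,\mathscr{F}_{t_{i}}\bigg]=1\quad\text{a.s.}
\]
The crucial observation is that on $[t_{i},t_{i+1})$ one has $X_{s}^{(l)}=\sum_{j=1}^{m}f_{j,l}(s)Y_{i}^{(j,l)}$, linear in the $\mathscr{F}_{t_{i}}$-measurable matrix $Y_{i}$ with deterministic coefficients in $s$. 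Since $(W_{s}-W_{t_{i}})_{s\in[t_{i},t_{i+1}]}$ is an $\mathscr{F}_{t_{i}}$-independent Brownian motion and $Y_{i}$ becomes a deterministic finite matrix when conditioned on $\mathscr{F}_{t_{i}}$, the Itô integral $\int_{t_{i}}^{t_{i+1}}X_{s}\,dW_{s}$ is, conditionally on $\mathscr{F}_{t_{i}}$, a Wiener integral with deterministic $L^{2}$ integrand, hence centred Gaussian with variance $\int_{t_{i}}^{t_{i+1}}|X_{s}|^{2}\,ds$. The classical Gaussian moment-generating function identity then yields the required equality.

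Iterating along $i=0,\dots,k-1$ gives $E[Z_{T}]=E[Z_{r}]=1$. Since $Z$ is a nonnegative supermartingale on $[r,T]$ with $E[Z_{T}]=E[Z_{r}]$, the supermartingale inequality $E[Z_{t}\mid\mathscr{F}_{s}]\leq Z_{s}$ combined with $E[E[Z_{t}\mid\mathscr{F}_{s}]]=E[Z_{t}]=E[Z_{s}]$ forces a.s.\ equality for every $r\leq s\leq t\leq T$, so $Z$ is a true martingale.

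The main obstacle is the rigorous justification of the conditional Gaussian computation when $Y_{i}$ is only assumed to be $\mathscr{F}_{t_{i}}$-measurable (not bounded), so that Novikov's criterion is not directly available on the whole of $[r,T]$. I would handle this either by a Fubini-type disintegration exploiting the independence of $(W_{t_{i}+u}-W_{t_{i}})_{u\geq 0}$ from $\mathscr{F}_{t_{i}}$ and treating $Y_{i}$ as a measurable parameter via a regular conditional probability, or by truncation: replace $Y_{i}$ by $Y_{i}\mathbbm{1}_{\{|Y_{i}|\leq n\}}$, apply Novikov conditionally on the bounded model to obtain the conditional expectation identity on $[t_{i},t_{i+1}]$, and pass to the limit $n\uparrow\infty$ using monotone and dominated convergence on the sets $\{|Y_{i}|\leq n\}\uparrow\Omega$.
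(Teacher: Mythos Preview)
Your proposal is correct and follows essentially the same route as the paper: reduce to $E[Z_{T}]=1$, establish the step identity $E[Z_{t_{i+1}}\mid\mathscr{F}_{t_{i}}]=Z_{t_{i}}$ by exploiting that on $[t_{i},t_{i+1})$ the integrand factors as $Y_{i}$ against deterministic $f$, and use the Gaussian moment-generating function. The only cosmetic difference is in how the conditioning is made rigorous: rather than regular conditional probabilities or truncation, the paper writes $\int_{t_{i}}^{t_{i+1}}X_{s}\,dW_{s}=\sum_{l,j}Y_{i}^{(j,l)}\int_{t_{i}}^{t_{i+1}}f_{j,l}(s)\,dW_{s}^{(l)}$, observes that the Wiener integrals are independent of $\mathscr{F}_{t_{i}}$, and applies the freezing lemma to get $E[\,\cdot\,\mid\mathscr{F}_{t_{i}}]=\psi(Y_{i})$ for the deterministic function $\psi(A)=E\big[\prod_{l}\exp(\cdots)\big]$, which equals $1$ by the Gaussian MGF. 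This is exactly your ``Fubini-type disintegration'' option, carried out without invoking regular conditional probabilities; no truncation is needed.
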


\begin{proof}
Since $Z$ is a positive supermartingale, it suffices to show that $E[Z_{T}]=1$. This in turn follows inductively if we can verify that $E[Z_{t_{i+1}}|\mathscr{F}_{t_{i}}] = Z_{t_{i}}$ a.s.~for each $i\in\{0,\dots,k-1\}$.

In this regard, note that $\int_{t_{i}}^{t_{i+1}}X_{s}\,dW_{s} = \sum_{l=1}^{d}\sum_{j=1}^{m} Y_{i}^{(j,l)}\int_{t_{i}}^{t_{i+1}}f_{j,l}(s)\,dW_{s}^{(l)}$ a.s. Because $\int_{t_{i}}^{t_{i+1}}f_{j,1}(s)\,dW_{s}^{(1)}$, $\dots$, $\int_{t_{i}}^{t_{i+1}}f_{j,d}(s)\,dW_{s}^{(d)}$ are independent of $\mathscr{F}_{t_{i}}$ for any $j\in\{1,\dots,m\}$, we have
\[
E\bigg[\exp\bigg(\int_{t_{i}}^{t_{i+1}}X_{s}\,dW_{s} - \frac{1}{2}\int_{t_{i}}^{t_{i+1}}|X_{s}|^{2}\,ds\bigg)\bigg|\mathscr{F}_{t_{i}}\bigg] = \psi(Y_{i})\quad\text{a.s.},
\]
where the Borel measurable function $\psi:\mathbb{R}^{m\times d}\rightarrow\mathbb{R}$ is given by
\[
\psi(A):= E\bigg[\prod_{l=1}^{d}\exp\bigg(\int_{t_{i}}^{t_{i+1}}\sum_{j=1}^{m}A_{j,l}f_{j,l}(s)\,dW_{s}^{(l)} - \frac{1}{2}\int_{t_{i}}^{t_{i+1}}\bigg|\sum_{j=1}^{m}A_{j,l}f_{j,l}(s)\bigg|^{2}\,ds\bigg)\bigg].
\]
Moreover, as $\int_{t_{i}}^{t_{i+1}}\sum_{j=1}^{m}A_{j,l}f_{j,l}(s)\,dW_{s}^{(l)}$ is normally distributed with zero mean and variance given by $\int_{t_{i}}^{t_{i+1}}|\sum_{j=1}^{m}A_{j,l}f_{j,l}(s)|^{2}\,ds$ for any $A\in\mathbb{R}^{m\times d}$ and $l\in\{1,\dots,d\}$, independence of the coordinates of $W$ entails that $\psi=1$, as desired.
\end{proof}

\begin{proof}[Proof of Theorem~\ref{General SDE Convergence Theorem}]
(i) If in condition~\eqref{C.8} we have $\overline{b}_{0} = 0$, then existence and uniqueness can be inferred from Proposition~\ref{General ODE Proposition}. Otherwise, we may let $\overline{b}_{0} = 1$ and, by using Lemma~\ref{Doleans-Dade Exponential Lemma}, define a martingale $\null_{n}\overline{Z}\in\mathscr{C}([0,T],\mathbb{R})$ via $\null_{n}\overline{Z}^{r} =1$ and
\[
\null_{n}\overline{Z}_{t}=\exp\bigg(-\int_{r}^{t}\overline{b}(s)\null_{n}\dot{W}_{s}'\,dW_{s} - \frac{1}{2}\int_{r}^{t}|\overline{b}(s)\null_{n}\dot{W}_{s}|^{2}\,ds\bigg)
\]
for all $t\in [r,T]$ a.s. Due to Girsanov's theorem, the process $\null_{n}\overline{W}\in\mathscr{C}([0,T],\mathbb{R}^{d})$ given by $
\null_{n}\overline{W}_{t}:= W_{t} + \int_{r}^{r\vee t}\overline{b}(s)\,d\null_{n}W_{s}$ is a $d$-dimensional $(\mathscr{F}_{t})_{t\in [0,T]}$-Brownian motion under the probability measure $\overline{P}_{n}$ on $(\Omega,\mathscr{F})$ defined by $\overline{P}_{n}(A):=E[\mathbbm{1}_{A}\null_{\,n}\overline{Z}_{T}]$. 

As this yields an equivalent probability measure, a process $Y\in\mathscr{C}([0,T],\mathbb{R}^{m})$ solves~\eqref{General Sequence SDE} under $P$ if and only if it is a solution to the SDE
\[
dY_{t} = \big(\underline{B}(t,Y) + B_{H}(t,Y)\dot{h}(t)\big)\,dt + \Sigma(t,Y)\,d\null_{n}\overline{W}_{t}\quad\text{for $t\in [r,T]$}
\]
under $\overline{P}_{n}$. For this reason, existence and uniqueness follow from Proposition~\ref{General SDE Proposition} when $b=\underline{B} + B_{H}\dot{h}$ and $\sigma=\Sigma$. Further, independently of this case distinction, Propositions~\ref{General Convergence Proposition 1} and~\ref{Kolmogorov-Chentsov Proposition} imply the second claim.

(ii) This assertion is an immediate application of Proposition~\ref{General SDE Proposition} in the case that $b=\underline{B} + R + B_{H}\dot{h}$ and $\sigma=\overline{B} + \Sigma$.

(iii) By Propositions~\ref{General Convergence Proposition 3},~\ref{General Convergence Proposition 1} and~\ref{Auxiliary Convergence Result 4} and Lemmas~\ref{Auxiliary Convergence Result 2} and~\ref{Auxiliary Convergence Result 3}, to establish~\eqref{General SDE Partition Limit Equation}, it suffices to show that there is $c_{2} > 0$ such that
\[
E\bigg[\max_{j\in\{0,\dots,k_{n}\}}\bigg|\int_{r}^{t_{j,n}}\big(\overline{B}(s,\null_{n}Y)-\overline{B}(\underline{s}_{n},\null_{n}Y)\big)\null_{n}\dot{W}_{s} - R(\underline{s}_{n},\null_{n}Y)\gamma_{n}(s)\,ds\bigg|^{2}\bigg] \leq c_{2}|\mathbb{T}_{n}|
\]
for each $n\in\mathbb{N}$. As $\partial_{x}\overline{B}$ is bounded, the existence of such a constant $c_{2}$ follows immediately from the decomposition~\eqref{Main Remainder Decomposition}, a combination of Proposition~\ref{Main Remainder Proposition 1} and Lemma~\ref{Main Remainder Lemma 2} with Lemma~\ref{Auxiliary Convergence Result 1} and an application of Proposition~\ref{Main Remainder Proposition 5}. Moreover, since $\sup_{n\in\mathbb{N}} E[\|\null_{n}Y\|_{\beta,r}^{p}] + E[\|Y\|_{\beta,r}^{p}] < \infty$ for all $\beta\in [0,1/2)$ and $p\geq 1$, the second assertion can now be inferred from Lemma~\ref{Hoelder Convergence Lemma}.
\end{proof}

To prove Theorem~\ref{Support Theorem} we require the following basic result on the support of image probability measures.

\begin{lemma}\label{General Support Lemma 1}
Let $(\tilde{\Omega},\tilde{\mathscr{F}},\tilde{P})$ be a probability space, $(S,\nu)$ be a metric space, $D\subset S$ and $Y:\tilde{\Omega}\rightarrow S$ be measurable such that $\tilde{P}\circ Y^{-1}$ is inner regular.
\begin{enumerate}[(i)]
\item Let $(Y_{n})_{n\in\mathbb{N}}$ be a sequence of $S$-valued measurable maps on $\tilde{\Omega}$ that converges in probability to $Y$. If $Y_{n}\in D$ a.s.~for all $n\in\mathbb{N}$, then $\mathrm{supp}(\tilde{P}\circ Y^{-1})\subset \overline{D}$.
\item Suppose that for each $y\in D$ there exists a sequence $(\tilde{P}_{y,n})_{n\in\mathbb{N}}$ of probability measures on $(\tilde{\Omega},\tilde{\mathscr{F}})$ such that $\tilde{P}_{y,n}\ll \tilde{P}$ for all $n\in\mathbb{N}$ and
\begin{equation}\label{General Support Inequality 1}
\inf_{n\in\mathbb{N}} \tilde{P}_{y,n}(\nu(Y,y)\geq \varepsilon) < 1
\end{equation}
for each $\varepsilon > 0$. Then $\overline{D}\subset\mathrm{supp}(\tilde{P}\circ Y^{-1})$.
\end{enumerate}
\end{lemma}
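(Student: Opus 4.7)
The plan is to exploit the standard characterization of the support of an inner regular Borel probability measure: a point $y\in S$ lies in $\mathrm{supp}(\tilde{P}\circ Y^{-1})$ if and only if $\tilde{P}(Y\in U) > 0$ for every open neighborhood $U$ of $y$. Equivalently (using inner regularity), $\mathrm{supp}(\tilde{P}\circ Y^{-1})$ is the smallest closed set of full $\tilde{P}\circ Y^{-1}$-measure.

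For part (i), I would first extract from the convergence of $(Y_{n})_{n\in\mathbb{N}}$ in probability to $Y$ a subsequence $(Y_{n_{k}})_{k\in\mathbb{N}}$ that converges to $Y$ $\tilde{P}$-almost surely. Since $\tilde{P}(Y_{n_{k}}\in D) = 1$ for every $k$, the a.s.~limit satisfies $Y\in\overline{D}$ a.s., so $\overline{D}$ is a closed set with $\tilde{P}(Y\in\overline{D}) = 1$. By the characterization recalled above, $\mathrm{supp}(\tilde{P}\circ Y^{-1})\subset\overline{D}$ follows.

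For part (ii), I would fix $y\in\overline{D}$ and $\varepsilon > 0$ and aim to prove $\tilde{P}(\nu(Y,y) < \varepsilon) > 0$. First treat the case $y\in D$: the hypothesis $\inf_{n\in\mathbb{N}}\tilde{P}_{y,n}(\nu(Y,y)\geq\varepsilon) < 1$ furnishes some $n_{0}\in\mathbb{N}$ with $\tilde{P}_{y,n_{0}}(\nu(Y,y) < \varepsilon) > 0$; absolute continuity $\tilde{P}_{y,n_{0}}\ll\tilde{P}$ then transfers this positivity to $\tilde{P}$. For general $y\in\overline{D}$, pick $y'\in D$ with $\nu(y',y) < \varepsilon/2$, apply the previous step to $y'$ with radius $\varepsilon/2$, and use $B(y',\varepsilon/2)\subset B(y,\varepsilon)$ to conclude. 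Since $\varepsilon > 0$ is arbitrary, every open neighborhood of $y$ has strictly positive $\tilde{P}\circ Y^{-1}$-measure, placing $y$ in the support.

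There is no serious obstacle: the argument is essentially a soft measure-theoretic manipulation. The only mildly delicate point is recognising that the somewhat unusual formulation \eqref{General Support Inequality 1} (an infimum over $n$ being strictly less than $1$) is precisely calibrated so that absolute continuity yields $\tilde{P}(\nu(Y,y) < \varepsilon) > 0$ without any need for a tightness-type argument, and that inner regularity is invoked only to guarantee that $\mathrm{supp}(\tilde{P}\circ Y^{-1})$ has full measure, which is what legitimises the reduction of part~(i) to showing $Y\in\overline{D}$ a.s.
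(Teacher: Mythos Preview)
Your argument is correct. For part~(ii) it coincides with the paper's proof (the paper phrases it by contradiction, but the logic is identical: absolute continuity transfers positivity of the ball from $\tilde P_{y,n}$ to $\tilde P$, and a triangle-inequality step handles $y\in\overline D\setminus D$). For part~(i) the paper proceeds differently: it fixes $y\in\mathrm{supp}(\tilde P\circ Y^{-1})$ and, using that $\tilde P(\nu(Y,y)<1/(2k))>0$ together with convergence in probability, directly exhibits $\omega_k$ with $Y_{n_k}(\omega_k)\in D$ and $\nu(Y_{n_k}(\omega_k),y)<1/k$, so $y\in\overline D$. Your route via an a.s.\ convergent subsequence and the characterization of the support as the smallest closed full-measure set is a bit shorter; the paper's pointwise construction is more hands-on. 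One minor remark: your closing comment slightly overstates the role of inner regularity in your own argument---once $\overline D$ is closed with full measure, $\overline D^{\,c}$ is an open null set and hence lies in the complement of the support by definition, so the inclusion $\mathrm{supp}\subset\overline D$ already follows without appealing to the support itself having full measure.
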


\begin{proof}
(i) Let $y\in\mathrm{supp}(\tilde{P}\circ Y^{-1})$ and $k\in\mathbb{N}$, then there exists $n_{k}\in\mathbb{N}$ such that $\tilde{P}(\nu(Y_{n},Y) > 1/(2k))$ $< \tilde{P}(\nu(Y,y) < 1/(2k))$ for all $n\in\mathbb{N}$ with $n\geq n_{k}$. By the triangle inequality,
\[
\tilde{P}(\nu(Y_{n},y)\geq 1/k) \leq \tilde{P}(\nu(Y_{n},Y) > 1/(2k)) + \tilde{P}(\nu(Y,y)\geq 1/(2k)) < 1
\]
for any such $n\in\mathbb{N}$. So, there is $\tilde{\omega}_{k}\in\tilde{\Omega}$ such that $y_{k}:=Y_{n_{k}}(\tilde{\omega}_{k})\in D$ and $\nu(y_{k},y) < 1/k$. As $k\in\mathbb{N}$ has been arbitrarily chosen, the resulting sequence $(y_{k})_{k\in\mathbb{N}}$ converges to $y$, which gives the claim.

(ii) By way of contradiction, assume that there are $y\in\overline{D}$ and $\varepsilon > 0$ such that $\tilde{P}(\nu(Y,y)\geq \varepsilon ) = 1$. Let $(y_{n})_{n\in\mathbb{N}}$ be a sequence in $D$ that converges to $y$ and choose $n_{\varepsilon}\in\mathbb{N}$ such that $\nu(y_{n_{\varepsilon}},y) < \varepsilon/2$. Then from
\[
\tilde{P}(\nu(Y,y)\geq \varepsilon) \leq \tilde{P}(\nu(Y,y_{n_{\varepsilon}}) > \varepsilon/2) + \tilde{P}(\nu(y_{n_{\varepsilon}},y) \geq \varepsilon/2)
\]
and $\tilde{P}_{y_{n_{\varepsilon}},n}\ll \tilde{P}$ it follows that $\tilde{P}_{y_{n_{\varepsilon}},n}(\nu(Y, y_{n_{\varepsilon}})\geq\varepsilon/2) = 1$ for each $n\in\mathbb{N}$. This, however, is a contradiction to~\eqref{General Support Inequality 1}.
\end{proof}

\begin{proof}[Proof of Theorem~\ref{Support Theorem}]
(i) Pathwise uniqueness follows from Lemma~\ref{General SDE Lemma 2} and the other two assertions are direct consequences of Proposition~\ref{General SDE Proposition}. 

(ii) For $h\in H_{r}^{1}([0,T],\mathbb{R}^{d})$ we set $F_{h}:=b - (1/2)\rho + \sigma\dot{h}$ and first check that $F_{h}$ satisfies conditions~\eqref{C.1}~and~\eqref{C.2}. Since $\sigma$ and $\partial_{x}\sigma$ are bounded, there is $c_{0}\geq 0$ such that $|\sigma|\vee |\rho|\leq c_{0}$. Then
\[
|F_{h}(t,x)|\leq c_{1}(1+|\dot{h}(t)|)(1 + \|x\|_{\infty})
\]
for all $t\in [r,T)$ and $x\in C([0,T],\mathbb{R}^{m})$ with $c_{1}:=3\max\{c,c_{0}\}$. Moreover, since $\sigma$ and $\partial_{x}\sigma$ are $d_{\infty}$-Lipschitz continuous, so is the map $\rho$. Let $\lambda_{0}\geq 0$ be a Lipschitz constant for $\rho$, then
\[
|F_{h}(t,x) - F_{h}(t,y)|\leq \lambda_{1}(1 + |\dot{h}(t)|)\|x-y\|_{\infty}
\]
for all $t\in [r,T)$ and $x,y\in C([0,T],\mathbb{R}^{m})$ with $\lambda_{1}:=2\max\{\lambda,\lambda_{0}\}$. Hence, an application of Proposition~\ref{General ODE Proposition} yields the first assertion.

Regarding the second claim, let us also choose $g\in H_{r}^{1}([0,T],\mathbb{R}^{d})$ and define $c_{2}:=2^{2}\max\{c_{0},\lambda_{1}\}^{2}\max\{1,T-r\}$. Then the above estimation shows that
\begin{equation*}
\|x_{g}^{t} - x_{h}^{t}\|_{H,r}^{2}\leq c_{2}\int_{r}^{t}|\dot{g}(s) - \dot{h}(s)|^{2} + (1 + |\dot{h}(s)|^{2})\|x_{g}^{s} - x_{h}^{s}\|_{H,r}^{2}\,ds
\end{equation*}
for given $t\in [r,T]$. By Gronwall's inequality, $\|x_{g} - x_{h}\|_{H,r}^{2}\leq c_{3}e^{c_{3}\|h\|_{H,r}^{2}}\|g - h\|_{H,r}^{2}$ with $c_{3}:=c_{2}\exp((T-r)c_{2})$, and the verification is complete.

{(iii) Let $N_{\alpha}$ be the $P$-null set of all $\omega\in\Omega$ such that $X(\omega)\notin C_{r}^{\alpha}([0,T],\mathbb{R}^{m})$, then $(N_{\alpha}^{c},\mathscr{F}\cap N_{\alpha}^{c},P_{|\mathscr{F}\cap N_{\alpha}^{c}})$ is a probability space and the probability measure
\begin{equation}\label{Support Image Measure}
\mathscr{B}(C_{r}^{\alpha}([0,T],\mathbb{R}^{m}))\rightarrow [0,1],\quad B\mapsto P(\{X\in B\}\cap N_{\alpha}^{c})
\end{equation}
is inner regular and its support agrees with the support of $P\circ X^{-1}$ in $C_{r}^{\alpha}([0,T],\mathbb{R}^{m})$. We note that the inner regularity can be inferred by using that $X$ belongs a.s.~to the separable closed linear subspace of all $x\in C_{r}^{\alpha}([0,T],\mathbb{R}^{m})$ such that 
\[
\lim_{\delta\downarrow 0} \sup_{s,t\in [r,T]:\,0 < |s-t|\leq \delta} \frac{|x(s)-x(t)|}{|s-t|^{\alpha}} = 0.
\]

Since~\eqref{Support Limit Equality 1} follows from Theorem~\ref{General SDE Convergence Theorem} by the choice $\underline{B}= b - (1/2)\rho$, $B_{H}=0$, $\overline{B}=\sigma$ and $\Sigma = 0$, Lemma~\ref{General Support Lemma 1} entails that the support of~\eqref{Support Image Measure} is included in the closure of $\{x_{h}\,|\,h\in H_{r}^{1}([0,T],\mathbb{R}^{d})\}$ with respect to $\|\cdot\|_{\alpha,r}$.

Next, let $h\in H_{r}^{1}([0,T],\mathbb{R}^{d})$ and for any $n\in\mathbb{N}$ we note that the non-anticipative product measurable map $[r,T]\times C([0,T],\mathbb{R}^{d})\rightarrow\mathbb{R}^{d}$, $(t,x)\mapsto \dot{L}_{n}(x)(t)$ satisfies $|\dot{L}_{n}(x)(t)|\leq 2c_{\mathbb{T}}|\mathbb{T}_{n}|^{-1}\|x\|_{\infty}$ for all $t\in [r,T]\backslash\mathbb{T}_{n}$ and $x\in C([0,T],\mathbb{R}^{d})$ and the map $C([0,T],\mathbb{R}^{d})\rightarrow\mathbb{R}^{d}$, $x\mapsto \dot{L}_{n}(x)(t)$ is linear for each $t\in [r,T]\backslash \mathbb{T}_{n}$.

From these facts it follows that for every $x\in C([0,T],\mathbb{R}^{d})$ there is a unique mild solution $y_{h,n,x}\in C([0,T],\mathbb{R}^{d})$ to the following ordinary integral equation with running value condition:
\[
y_{h,n,x}(t) = x(t) - \int_{r}^{r\vee t}\dot{h}(s) - \dot{L}_{n}(y_{h,n,x})(s)\,ds\quad\text{for $t\in [0,T]$.}
\]
Moreover, the map $C([0,T],\mathbb{R}^{d})\rightarrow C([0,T],\mathbb{R}^{d})$, $x\mapsto y_{h,n,x}$ is Lipschitz continuous on bounded sets and in particular, Borel measurable. These considerations allow us to define a process $\null_{h,n}W\in\mathscr{C}([0,T],\mathbb{R}^{d})$ by $\null_{h,n}W_{t}:=y_{h,n,W}(t)$.

Given this constructed process, it follows from Lemma~\ref{Doleans-Dade Exponential Lemma} that we obtain a martingale $\null_{h,n}Z\in\mathscr{C}([0,T],\mathbb{R})$ by requiring that $\null_{h,n}Z^{r}=1$ and
\[
\null_{h,n}Z_{t} = \exp\bigg(\int_{r}^{t}\dot{h}(s)' - \dot{L}_{n}(\null_{h,n}W)(s)'\,dW_{s} - \frac{1}{2}\int_{r}^{t}|\dot{h}(s) - \dot{L}_{n}(\null_{h,n}W)(s)|^{2}\,ds\bigg)
\]
for every $t\in [r,T]$ a.s.~and we may define a probability measure $P_{h,n}$ on $(\Omega,\mathscr{F})$ that is equivalent to $P$ by $P_{h,n}(A):=E[\mathbbm{1}_{A}\null_{h,n}Z_{T}]$. According to the second part of Lemma~\ref{General Support Lemma 1}, if
\begin{equation}\label{Support Limit Inequality}
\inf_{n\in\mathbb{N}} P_{h,n}(\{\|X - x_{h}\|_{\alpha,r}\geq\varepsilon\}\cap N_{\alpha}^{c}) < 1\quad\text{for each $\varepsilon > 0$,}
\end{equation}
then the closure of $\{x_{g}\,|\,g\in H_{r}^{1}([0,T],\mathbb{R}^{d})\}$ with respect to $\|\cdot\|_{\alpha,r}$ is included in the support of~\eqref{Support Image Measure}. Now Girsanov's theorem implies that for each $n\in\mathbb{N}$ the process $\null_{h,n}W$ is a $d$-dimensional $(\mathscr{F}_{t})_{t\in [0,T]}$-Brownian motion under $P_{h,n}$ and $X$ is a strong solution to~\eqref{Girsanov SDE} under $P_{h,n}$.

Hence, by using uniqueness in law, an application of Theorem~\ref{General SDE Convergence Theorem} in the case that $\underline{B}=b$, $B_{H}=\sigma$, $\overline{B}=-\sigma$ and $\Sigma=\sigma$ gives~\eqref{Support Limit Equality 2}. As this readily implies~\eqref{Support Limit Inequality}, the proof is complete.}
\end{proof}

\end{document}